\newtheorem{theorem}{Theorem}[section]
\newtheorem{proposition}[theorem]{Proposition}
\newtheorem{lemma}[theorem]{Lemma}
\theoremstyle{remark}
\newtheorem{remark}[theorem]{Remark}
\theoremstyle{definition}
\newtheorem{definition}[theorem]{Definition}
\newcommand{\N}{\mathbb N}
\newcommand{\Z}{\mathbb Z}
\newcommand{\R}{\mathbb R}
\newcommand{\ho}{{\mathcal H}^{1}}
\newcommand{\hz}{{\mathcal H}^{0}}
\newcommand{\dd}{\,\mathrm{d}}
\newcommand{\dho}{\,\dd{\mathcal H}^{1}}
\newcommand{\wto}{\rightharpoonup}
\newcommand{\wtom}{\stackrel{*}{\rightharpoonup}}
\renewcommand{\o}{\Omega}
\newcommand{\f}{\mathcal{F}}
\newcommand{\g}{\mathcal{G}}
\newcommand{\MC}{C}
\newcommand{\gr}{\text{graph}}
\newcommand{\bv}{\mathrm{BV}}
\newcommand{\average}{{\mathchoice {\kern1ex\vcenter{\hrule height.4pt
				width 6pt
				depth0pt} \kern-9.7pt} {\kern1ex\vcenter{\hrule height.4pt width 4.3pt
				depth0pt}
			\kern-7pt} {} {} }}
\newcommand{\med}{\average\int}
\newcommand{\blue}[1]{{\color{black} {#1}}}
\title[Graph epitaxy with adatoms]{Two dimensional graph model for
epitaxial crystal growth with adatoms}
\author{Riccardo Cristoferi}
\address[Riccardo Cristoferi]{Department of Mathematics - IMAPP, Radboud University, Nijmegen, The Netherlands}
\email{riccardo.cristoferi@ru.nl}
\author{Gabriele Fissore}
\address[Gabriele Fissore]{Department of Mathematics - IMAPP, Radboud University, Nijmegen, The Netherlands}
\email{gabriele.fissore@ru.nl}
\date{\today}                                        
\subjclass[2020]{}
\keywords{}
\begin{document}

\begin{abstract}
We consider a model to describe stable configurations in epitaxial growth of crystals in the two dimensional case, and in the regime of linearized elasticity.
The novelty is that the model also takes into consideration the adatom density on the surface of the film. These are behind the main mechanisms of crystal growth and formation of islands (or quantum dots).
The main result of the paper is the integral representation of the relaxed energy.
\end{abstract}

\maketitle

\section{Introduction}

The ability to grow thin films of crystal over a substrate is a technology that has applications in several areas, from surface coating, to lithography.
Practitioners developed several techniques to grow crystals over a substrate.
Vapor deposition techniques are among the most important and implemented: the substrate is immersed in a vapor, and mass transfer from the latter to the former is responsible for the growth of the crystal.
In order for the crystal to growth, two conditions need to be satisfied: the vapor has to be saturated, and the substrate is kept at a significantly lower temperature than the vapor.
The former ensures attachment of vapor atoms on the substrate, while the latter quick termalization of deposited atoms.
In particular, this implies that the entropic free energy is reduced after attachment.

In order to grow a crystal (namely, an ordered structure), attached atoms, called \emph{adatoms}, need to have sufficient energy to move from the landing location to a position of equilibrium. This depends on the type of materials used in the vapor and for the substrate. Surface diffusion of adatoms is therefore the mechanism used by thin films to growth as a crystal.

If the growth process is made in such a way that the first layers of the film arrange in the same lattice structure of the substrate, the growth is called \emph{epitaxial}. Of course, the atoms of the deposited material are stretched or compressed, since they are not in their (sometimes, stress free) natural configuration.\\

The dynamic of the crystal growth process is extremely complicated, and it is influenced by many factors. In particular, the ratio between the tendency of the adatoms to stick to the substrate and their tendency to diffuse.
Three modes of growth are defined based on this ratio: the \emph{Frank-van der Merwe growth mode}, where diffusion is stronger and thus the crystal growth layer by layer, the \emph{Volmer-Weber growth mode}, where diffusion is weaker, and therefore adatoms tend to form islands on the substrate, and an intermediate one, the \emph{Stranski-Krastanov growth mode}, where the first monolayers of the film behave like in a Frank-van der Merwe growth mode, while after a certain threshold, it starts forming islands. Here we consider the latter case. \\

In the epitaxial Stranski-Krastanov growth mode, it is observed that, after a few monolayer of material are deposited, the film accumulates too much elastic energy that it is no more energetically convenient for atoms of the film to stick to the crystalline structure of the substrate. Thus, relaxation processes are employed in order to reduce the total energy of the system. The most important ones are corrugation of the surface, and creation of defects. These are known in the literature as stress driven rearrangement instabilities (see \cite{Grin}). The former is responsible for non-flat surfaces as well as for the appearance of islands (agglomerates of atoms, also called \emph{quantum dots}) on the surface. With the latter, instead, the film introduces singularities in its crystalline structure, such as cracks and dislocations.

It is extremely important to be able to control this extremely complex process in such a way to reduce impurities as much as possible, or at least to be able to quantify them.\\

The physical literature on crystal growth is extremely vast. Here we limit ourselves to mention the pioneering work \cite{SpeTer97} by Spencer and Tersoff.

From the mathematical point of view, several investigation have been carried out, focusing on different aspects of the growth process. There are both discrete models, and continuum ones. Here we focus on these latter. In particular, the work \cite{BonCha02} by Bonnetier and Chambolle laid the foundations for rigorous mathematical investigations of stable equilibrium configurations of epitaxially strained elastic thin films in the linear elastic regime. The authors considered the two dimensional case and proved an integral representation formula for the relaxed energy with respect to the natural topology of the problem, as well as a phase field approximation.
In \cite{FonFusLeoMor07}, Fonseca, Fusco, Leoni, and Morini proved a similar result by using an independent strategy, and also investigated the regularity of configurations locally minimizing the energy.

Questions about the stability of the flat profile were investigate by Fusco and Morini in \cite{FusMor12} for the case of linear elasticity, and in \cite{Bon15} by Bonacini in the nonlinear regime. Moreover, in \cite{Bon13}, Bonacini considered the same question for the case where surface energy is anisotropic, showing, surprisingly, that the flat interface is always stable.

It was not until 2019, with the work \cite{CriFri20} by Crismale and Friedrich that the three dimensional case was considered.
Indeed, despite the existence of investigations for similar functionals in higher dimension (see the work \cite{ChaSol} by Chambolle and Solci, and \cite{BraChaSol} by Braides, Chambolle, and Solci for the study of material void) were available, all of them considered elastic energies depending on the full gradient of the displacement. On the other hand, it is known that physically compatible models for elasticity must depend on the symmetrized gradient.
The reason for such a time gap between the two and the three dimensional case was technical: it was not clear how to get compactness of a sequence of configurations with uniformly bounded energy. This required the introduction of a new functional space: GSBD, the space of Generalized Functions of Bounded Deformation, designed in the work \cite{DalMasoGSBD} by Dal Maso in 2014 specifically to address such an issue.\\

What all of the above continuum models are neglecting is the role of adatoms in the creation of equilibrium stable interfaces.
The importance of considering their effect was made clear by Specer and Tersoff in \cite{SpeTer97}, where the authors highlighted that considering the effect of adatoms, and in particular of surface segregation of several species of deposited material, will change the equilibrium configurations predicted by the model, and hopefully provide a more accurate description of those observed in experiments.

This was made even clearer in the seminal paper \cite{FriGur} by Fried and Gurtin. The manuscript unified several ad hoc investigations that focused on specific aspects on crystal growth or used specific assumptions to derive the model.
In particular, it was noted that considering adatoms will, on the one hand, add a new variable to the problem, while, on the other hand, will make the evolution equations pararbolic. Note that this is a huge mathematical advantage, since in \cite{FonFusLeoMor12} and in \cite{FonFusLeoMor18}, the authors had to add an extra term to the energy (that nevertheless has some physical interpretation) to regularize the non-parabolic evolution equations obtained from the model that does not take into consideration adatoms.

Following this direction of investigation, in \cite{CarCriDie}, the first author, together with Caroccia and Dietrich, started the study of a variational characterization of the evolution equations derived by Fried and Gurtin.
In that paper, the authors considered a variational model describing the equilibrium shape of a crystal, where the elastic energy is neglected, and the crystal can grow without the graph constraint. From the energy for regular configurations, a natural topology was identified, and a representation formula for the relaxed energy was obtained. The result highlighted the interplay between oscillations of crystal surfaces and changes in adatom density in order to lower the total energy.
The result obtained in that paper was different from previous investigations by Bouchitt\'{e} (see \cite{Bou87}), Bouchitt\'{e} and Buttazzo (see \cite{BouButt90}), and Buttazzo and Freddi (see \cite{ButtFredd}), due to the choice of the topology.

In a subsequent paper (see \cite{CarCri}), a phase field model was considered in a more general setting, to pave the way towards the analysis of the convergence of the gradient flows.\\

In this paper, we continue this line of research by considering the case where the material is deposited on a substrate, its profile can be described by a function, and the elastic energy of the film is considered, as well as the surface energy of adatoms.
The goal is to obtain a representation formula for the relaxed energy in the natural topology of the problem.
In order to develop the main ideas needed for such an investigation, this paper focus on the two dimensional case.
\blue{The main contribution of the paper is to show how the mechanism identified in \cite{CarCriDie} where oscillations of the profile interact with adatom concentration plays a role in the case where the geometry of the configuration is constrained to be a graph.
This might seem as an easier case than that treated in \cite{CarCriDie}, where the profile of the crystal was free to growth in any direction.
Nevertheless, the graph constraint poses several challenges that have to be tackled with the utmost care, in order to be properly overcome.
Indeed, we prove that the relaxed energy differs from that of \cite{CarCriDie} exactly on vertical cracks of the deposited layer.
In particular, we introduce a strategy to deal with oscillations and adatom concentration on vertical cracks, whose robustness will be tested in a forthcoming paper where we will investigate a phase-field approximation of the model and another where we will treat the three dimensional case.
}

Forthcoming papers will also consider the dynamics of the model, and the situation when multiple species of materials are deposited at the same time.

%%%%%%%%%%%%%%%%%%%%%%%%%%%%%%%%%%%%
%%%%%%%%%%%%%%%%%%%%%%%%%%%%%%%%%%%%
%%%%%%%%%%%%%%%%%%%%%%%%%%%%%%%%%%%%
%%%%%%%%%%%%%%%%%%%%%%%%%%%%%%%%%%%%

\subsection{The model}

In this section we introduce the model that we will study.
We consider the two dimensional case. This corresponds to three dimensional configurations that are constant in one direction. We work within the continuum theory of epitaxial growth.
The main assumptions of the model are the following:
\begin{itemize}
	\item[(i)] The profile of the configurations of the thin film can be described as the graph of a function;
	\item[(ii)] We neglect surface stress; 
	\item[(iii)] The exchange of atoms between the substrate and the deposited film is negligible;
	\item[(iv)] The atoms of the substrate do not change position.
\end{itemize}

The free energy of a configuration is the sum of a bulk energy and a surface energy. The former is the elastic energy due to rearrangement of the atoms of the deposited film from a stress free configuration (atoms sitting in their natural lattice position) to another disposition. The latter, instead, stems from the net work needed to create an interface with a specific density of adatoms.
We first prescribe the energy of regular configurations, and will then obtain that of more irregular configurations by relaxing the former.\\

We model the substrate as the set $\{(x,y)\in\R^2 \,:\, y\leq 0\}$. 
We consider a portion of the deposited film in a region $(a,b)\times\{y\geq 0\}$. To describe the free profile of the film, let $h:(a,b)\to\R$ be a non-negative Lipschitz function. Consider its graph
\begin{align}\label{gammah}
	\Gamma_h\coloneqq \big\{ \big(x,h(x)\big) \,:\, x\in(a,b) \big\},
\end{align} 
and its sub-graph (see Figure \ref{fig:admissible} on the left)
\begin{align}\label{omegah}
	\Omega_h\coloneqq\{(x,y)\in\R^2 \,:\, x\in(a,b),\, y<h(x)\}.
\end{align}
The set $\Omega_h\cap\{y\geq0\}$ represents the deposited film. We first introduce the surface energy. 
The adatom density will be described by a positive function $u\in L^1(\ho\llcorner\Gamma_h)$. \blue{The surface energy corresponding to such an adatom density distribution} will be
\[
\int_{\Gamma_h} \psi\big(u(\textbf{x})\big) \dho(\textbf{x}),
\]
where with $\textbf{x}$ we denote a point in $\R^2$, and $\psi:[0,+\infty)\to(0,+\infty)$ is a Borel function such that
\begin{equation}\label{eq:lower_bound_psi}
	\inf_{s\geq 0}  \psi(s) > 0.
\end{equation}
Note that such a requirement has the physical interpretation that no matter what the adatom density is, there is always an amount of energy needed to construct a profile.\\

We now discuss the elastic energy.
For each macroscopic configuration $\Omega_h$, there are several arrangements of atoms inside the thin film that produce that same profile. To each of these arrangements there is an elastic energy associated to: this energy will depend on the displacement between the actual position of each atom and its position in the natural crystal lattice
This displacement will be described by a function $v:\Omega_h\to\R^2$, and we assume it to be of class $W^{1,2}(\Omega_h;\R^2)$.
The natural crystal configuration of the crystalline substrate and that of the deposited film are represented by a function $E_0:\R\to \R^{2\times 2}$, defined as
\begin{align*}
	E_0(y)\coloneqq\begin{cases}
		t e_1 \otimes e_1\quad&\text{if}\ y\geq0 ,\\[5pt]
		0\quad&\text{if}\ y<0.
	\end{cases}
\end{align*}
Here, $t>0$ is a constant depending on the lattice of the substrate, and $\{e_1,e_2\}$ is the canonical basis of $\R^2$.
The crystalline structure of the film and the substrate might be slightly different, but we assume their difference to be very small, namely $|t|\ll1$.
This assumption allows us to work in the framework of linearized elasticity. 
In particular, the relevant object needed to compute the elastic energy is the symmetric gradient of the displacement
\begin{align*}
	E(v)\coloneqq\frac{1}{2}(\nabla v + \nabla^\top v),
\end{align*}
where $\nabla^\top v$ is the transpose of the matrix $\nabla v$.
Note that $E(v)$ is zero if $\nabla v$ is zero for any anti-symmetric matrix (for instance, a rotation matrix).

Finally, we assume that the substrate and the film share similar elastic properties, so they are described by the same positive definite elasticity tensor $\mathbb{C}$. The elastic energy density will be given by a function $W:\R^{2\times 2}\to \R$ defined as
\begin{align*}
	W(A)\coloneqq\frac{1}{2} A\cdot \mathbb{C}  [A]
	= \frac{1}{2}\sum_{i,j,m,n=1}^2 c_{ijnm} a_{ij}a_{nm},
\end{align*}
for a $2\times 2$ matrix $A=(a_{ij})_{i,j=1}^2$.
The elastic energy will then be
\begin{align*}
	\int_{\Omega_h} W\big( E(v(\textbf{x}))-E_0(y) \big)\ \text{d}\textbf{x}.
\end{align*}
Therefore, the energy of a regular configuration that we consider is given by
\begin{align}\label{fucntionalunrelaxed}
	\f(\Omega_h,v,u)\coloneqq \int_{\Omega_h} W\big( E(v(\textbf{x}))-E_0(y) \big)\ \text{d}\textbf{x}
	+ \int_{\Gamma_h} \psi\big(u(\textbf{x})\big)\dho(\textbf{x}),
\end{align}
where $h:(a,b)\to\R$ is a non-negative Lipschitz function, $u\in L^1(\ho\llcorner\Gamma_h)$, and $v\in W^{1,2}(\Omega_h;\R^2)$.
In the following, we will refer to such triples as regular admissible configurations, and we will denote it by the class $\mathcal{A}_r$ (see Definition \ref{def:reg_admiss}).

%%%%%%%%%%%%%%%%%%%%%%%%%%%%%%%%%%%%
%%%%%%%%%%%%%%%%%%%%%%%%%%%%%%%%%%%%
%%%%%%%%%%%%%%%%%%%%%%%%%%%%%%%%%%%%
%%%%%%%%%%%%%%%%%%%%%%%%%%%%%%%%%%%%

\subsection{The main result}

In order to study the relaxation of the energy $\mathcal{F}$, we need to first discuss what topology to use. This will determine the types of limiting configurations to expect, and how these effect the value of the effective energy. \blue{Here we justify the definition of the topology we use, that will be stated precisely in Definition \ref{convergence}.}

\begin{figure}
	\begin{center}
		\includegraphics[scale=1.1]{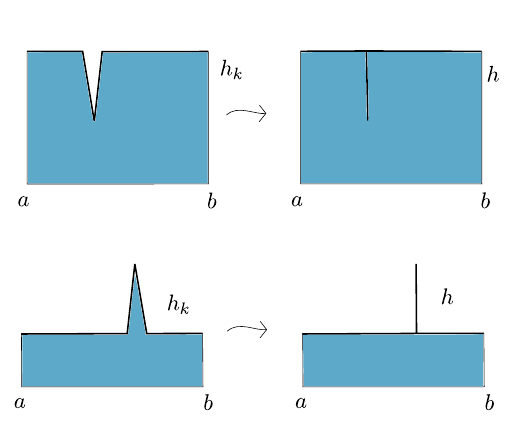}
		\caption{Two ways that a sequence of graphs can \emph{close up}: on the top by giving rise to a crack inside $\Omega_h$, while on the bottom to a crack outside $\Omega_h$. We want a topology that \emph{sees} the crack in the former case, but not in the latter.}
		\label{fig:cuts}
	\end{center}
\end{figure}

We first consider the notion of convergence for the profiles of the film.
This will be the same used in \cite{FonFusLeoMor07}. Here we give the heuristics for such a choice. There are several mechanisms that a film can use to release elastic energy. Our model allows for three of these: rearrangement of atoms inside the film, corrugation of the surface, and creation of cracks.
The topology on the profile will be concerned only with the last two.
How can a crack form? There are two mechanisms: as a fracture inside the film, or when the free profile becomes vertical, like it is depicted in Figure \ref{fig:cuts} on the top. We choose to model situations where only the latter is allowed.
Note that this forces cracks to be vertical segments touching the free profile.
What we want to avoid are configurations where cracks happen outside of the film (Figure \ref{fig:cuts} on the bottom). Thus, we need to differentiate the two situations. The right way to do it is by considering the Hausdorff convergence of the complement of the sub-graphs (the so called Hausdorff-complement topology).
We note that, in the latter case, the sets $\R^2\setminus\Omega_{h_k}$ will converge to the limiting configuration $\R^2\setminus\Omega_h$ where there is no vertical cut (see Figure \ref{fig:cuts} on the bottom).
This topology also accommodates for corrugation of the profile.\\

We now consider the convergence of the displacements. Since the energy has quadratic growth in the symmetric gradient of the displacement, the natural topology will be the weak $W^{1,2}$ topology. In particular, in order to take care of the fact that the displacements are defined in different domains (the subgraphs of the profiles), we take advantage of the fact that the complement of these latter are converging in the Hausdorff sense. Thus, local convergence in the final domain will do the job.\\

Finally, we discuss the topology for the adatom density.
In \cite{CarCriDie} the idea was to see the adatom density as a Radon measure $\mu$ concentrated on the graph describing the profile. Namely, for each $u\in L^1(\Gamma_h)$, we consider
\[
\mu\coloneqq u\ho  \llcorner \Gamma_h.
\]
This identification allows not only to consider concentration of measures, but it \blue{turns out to be the right way to model adatoms in order to} exploit the interplay between oscillations of the profile and change in adatom density. Thus, for the adatom density, the weak${}^*$ convergence of measures will be used.\\

The question we now have to address is what are the possible limiting objects that we need to consider. This is a discussion of compactness of sequences $(\Omega_{h_k},v_k, \mu_k)_k$ with uniformly bounded energy, namely such that
\[
\sup_{k\in\N} \mathcal{F}(\Omega_{h_k},v_k, \mu_k) < +\infty,
\]
We start by investigating the convergence of graphs, and the others will follow. 
Thanks to the lower bound \eqref{eq:lower_bound_psi} on the energy density $\psi$, the energy $\mathcal{F}$ is lower bounded by the length of the graph of $h_k$. Indeed, there exists $c>0$ such that
\[
\sup_{k\in\N} c \ho(\Gamma_k) \leq \sup_{k\in\N}\int_{\Gamma_{h_k}} \psi(u_k)\dho <+\infty
\]
which in turn is a lower bound on the total variation of $h_k$:
\[
\sup_{k\in\N} \ho(\Gamma_{h_k}) = \sup_{k\in\N} \int_a^b \sqrt{1+|h'_k|^2}\ \text{d}x
\geq \int_a^b |h'_k|\ \text{d}x.
\]
Thus, if a mass constraint on the area of $\Omega_k$, or a Dirichlet boundary condition at $a$ and $b$ are imposed, we get that the limiting configuration will be the sub-graph of a function $h:(a,b)\to[0,+\infty)$ of bounded variation.
In particular, since we are in the one dimensional case, such a function will have countably many jumps and countably many cuts. \\

Now, we consider the convergence of the displacement. Due to the choice of the topology, the limiting displacement will be a function $v\in W^{1,2}(\Omega_h;\R^2)$.
Note that one of the technical advantages of working in dimension two is that we can avoid having to rely on functions of bounded deformation, and use instead Sobolev functions and the free profile to describe cracks.\\

Finally, let us discuss the adatom densities. Each of them is seen as the Radon measure $u_k\ho\llcorner\Gamma_{h_k}$.
By imposing a mass constraint on the total amount of adatoms, we have that their total variation is bounded, and thus they converge (up to a subsequence), to a Radon measure $\mu$.
Noting that each $\mu_k$ is supported on the graph $\Gamma_{h_k}$, and these latter also converge in the Hausdorff sense to the graph of the limiting profile $h$, the limiting measure $\mu$ will be supported on $\Gamma_h$.\\

Therefore, the class $\mathcal{A}$ of limiting admissible configurations we will need to consider is given by the triples $(\Omega_h,v,\mu)$, where $h\in \bv(a,b)$, $v\in W^{1,2}(\Omega_h;\R^2)$, and $\mu$ is a Radon measure supported on $\Gamma_h$. Moreover, we denote by $\Gamma^c_h$ the cuts of $h$, and by $\widetilde{\Gamma}_h$ the rest of the extended graph of $h$, namely regular part and jumps (see  Figure \ref{fig:admissible} on the right, and Definition \ref{cuts} for the precise definition). \\

\blue{Thus, in light of the above discussion, given a sequence $(\Omega_{h_k},v_k,\mu_k)_k\subset\mathcal{A}_r$, we will say that $(\Omega_{h_k},v_k,\mu_k)\to (\o_h,v,\mu)\in\mathcal{A}$ if
	\begin{enumerate}
		\item[$(i)$] $\R^2\setminus\Omega_{h_k}\stackrel{H}{\rightarrow} \R^2\setminus\Omega_h$ in the Hausdorff convergence of sets;
		\item[$(ii)$] $v_k\rightharpoonup v$ weakly in $W^{1,2}_{\mathrm{loc}}(\Omega_h;\R^2)$,
		\item[$(iii)$] $\mu_k\wtom\mu$ weakly$^\ast$ in the sense of measures;
	\end{enumerate}
	as $k\to\infty$.} \\
	
\begin{figure}
	\begin{center}
		\includegraphics[scale=0.8]{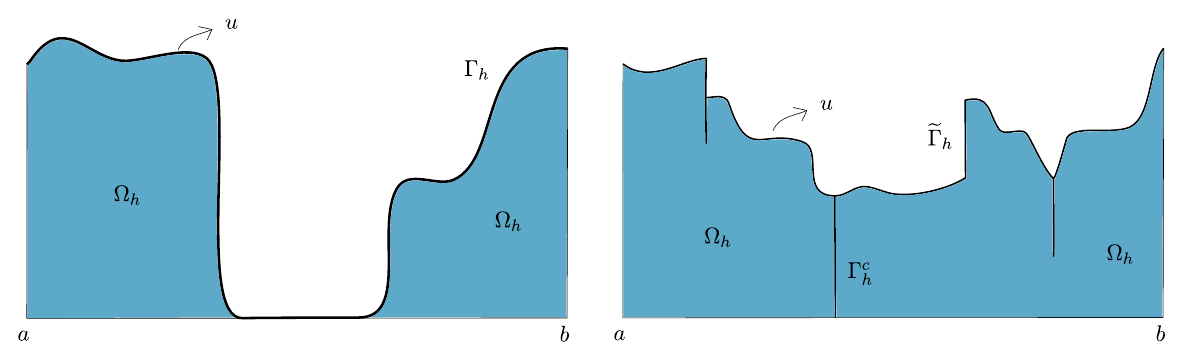}
		\caption{A regular configuration on the left, and a possible limiting configuration on the right: cracks and jumps can appear.}
		\label{fig:admissible}
	\end{center}
\end{figure}

The two main results of this paper provide representations of the relaxation of the functional $\mathcal{F}$ when a mass constraint is in force, and when it is not.

\begin{theorem}\label{thm:main_no_mass}
\blue{	Let $(\Omega_h,v,\mu)\in \mathcal{A}$, and write $\mu = u\ho\llcorner\Gamma_h +\mu^s\llcorner\Gamma_h$, 
	where $\mu^s$ is the singular part of $\mu$ with respect to $\ho\llcorner\Gamma_h$.} Then, the relaxation of the functional $\mathcal{F}$ defined in \eqref{fucntionalunrelaxed}, with respect to the above topology, is given by
\begin{align*}
	\overline{\mathcal{F}}(\Omega_h,v,\mu) = \int_{\Omega_h}
	W\big( E(v(\textbf{x}))-E_0(y) \big)\ \emph{d}\textbf{x}
	+ \int_{\widetilde{\Gamma}_h}\widetilde{\psi}\big(u(\textbf{x})\big)\dho(\textbf{x})
	+\int_{\Gamma^c_h}\psi^c\big(u(\textbf{x})\big)\dho(\textbf{x})
	+\theta\mu^s(\Gamma_h),
\end{align*}
	where $\widetilde{\psi}$ is the convex sub-additive envelope of $\psi$ (see Definition \ref{psitilde}), the function $\psi^c$ is defined as
	\[
	\psi^c(s)\coloneqq\min\{\widetilde{\psi}(r)+\widetilde{\psi}(t) \,:\, s=r+t\},
	\]
	for all $s\in[0,+\infty)$, and
	\[
	\theta \coloneqq \lim_{t\to+\infty} \frac{\widetilde{\psi}(t)}{t}
	= \lim_{t\to+\infty} \frac{\psi^c(t)}{t},
	\]
	is the common recession coefficient of $\widetilde{\psi}$ and of $\psi^c$.
\end{theorem}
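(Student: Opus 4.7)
The plan is to prove the representation by the standard two-sided argument: a liminf inequality along every admissible sequence $(h_k, v_k, u_k)_k$ with $h_k \to h$ in Hausdorff complement, $v_k \wto v$ in $W^{1,2}_{\mathrm{loc}}$, and $u_k \ho \llcorner \Gamma_{h_k} \wtom \mu$, together with the construction of a recovery sequence in $\mathcal{A}_r$ whose energies converge to $\overline{\mathcal{F}}(h,v,\mu)$. A convenient feature of the model is that the bulk and surface terms essentially decouple in the limit: the convex quadratic elastic energy is treated by classical weak lower semicontinuity on the interior of $\Omega_h$, so the heart of the matter is the analysis of the surface term on the limit graph $\Gamma_h = \widetilde{\Gamma}_h \cup \Gamma_h^c$.

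\textbf{Lower bound.} For the elastic contribution, positive definiteness of $\mathbb{C}$ and the weak convergence of the displacements give
\[
\int_{\Omega_h} W\big( E(v) - E_0 \big)\,\mathrm{d}\mathbf{x} \le \liminf_{k\to\infty} \int_{\Omega_{h_k}} W\big( E(v_k) - E_0 \big)\,\mathrm{d}\mathbf{x},
\]
using that any compact $K \Subset \Omega_h$ is eventually contained in $\Omega_{h_k}$ by Hausdorff-complement convergence. For the surface part, I would apply a Reshetnyak-type lower semicontinuity result to the weak-$*$ convergence $u_k \ho\llcorner\Gamma_{h_k} \wtom \mu$ on $\Gamma_h$, and identify, via a blow-up argument, the effective densities at $\ho$-a.e. point of each piece of $\Gamma_h$. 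At a regular point of $\widetilde{\Gamma}_h$ or a jump, the blow-up produces an essentially horizontal or vertical arc on which the adatom density can be split into arbitrarily many thin oscillations; minimizing $\sum_i \psi(s_i)$ over $\sum_i s_i = u$ and allowing convex combinations forces the effective density to be the convex subadditive envelope $\widetilde{\psi}$. At a cut, the blow-up separates into two vertical copies on which the density distributes as $r + t = u$, yielding $\psi^c$. Singular parts of $\mu$ are blown up at infinity in the variable $s$, where both $\widetilde{\psi}$ and $\psi^c$ share the common recession slope $\theta$.

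\textbf{Upper bound.} By a diagonal density argument I would first reduce to the case in which $h$ is piecewise affine with finitely many jumps and cuts, $v$ is smooth up to $\overline{\Omega_h}$, and $\mu = u\,\ho\llcorner\Gamma_h + \sum_i \alpha_i \delta_{p_i}$. Each piece of the target energy is then matched by an explicit local construction: on a smooth arc one realizes $\widetilde{\psi}(u)$ by a sawtooth perturbation of amplitude $\e \to 0$ whose $n$ sub-arcs carry densities $s_1,\dots,s_n$ with $\sum_i s_i = u$ attaining the convex-subadditive envelope; at a jump or a cut, the analogous construction uses the optimal splitting $u = r + t$ in the definition of $\psi^c$; each Dirac $\alpha_i \delta_{p_i}$ is recovered by a vertical spike in $h_k$ of height $L_k \to +\infty$ carrying the constant density $\alpha_i / L_k$, so that $L_k\, \widetilde{\psi}(\alpha_i / L_k) \to \theta\alpha_i$ by the definition of the recession slope. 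Compatibility with the elastic term is secured by Sobolev extension of $v$ to a neighbourhood of $\overline{\Omega_h}$ followed by restriction to the perturbed subgraphs, whose modifications cover arbitrarily small area.

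\textbf{Main obstacle.} The most delicate step is the rigorous identification of the effective density $\psi^c$ at a cut in the lower bound. Since a cut is produced by graphs with exploding slopes, the two sides of the emerging vertical segment correspond to disjoint portions of $\Gamma_{h_k}$ crossing a narrow horizontal window in opposite directions; isolating the two mass contributions without double counting and handing them off as two independent inputs to $\widetilde{\psi}$ requires a careful arc-length parametrization of $\Gamma_{h_k}$ near the cut, controlled via Hausdorff-complement convergence and the length bound coming from \eqref{eq:lower_bound_psi}. A secondary technical issue is to choose the amplitudes of the oscillations in the recovery sequence small enough, relative to the elastic scale, so that the elastic energy passes to the limit unaltered.
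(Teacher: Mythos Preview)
Your overall two--sided scheme is the right one and matches the paper in spirit, but there is a genuine error in your recovery of the singular part of $\mu$, and your liminf at cuts is handled differently (and more vaguely) than in the paper.

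\medskip
\textbf{The singular part in the limsup.} Your ``vertical spike of height $L_k\to+\infty$ carrying density $\alpha_i/L_k$'' does \emph{not} produce the recession term. The recession coefficient is $\theta=\lim_{t\to+\infty}\widetilde{\psi}(t)/t$, i.e.\ it is the slope of $\widetilde{\psi}$ at \emph{infinity}, not at $0$. With your construction the density $\alpha_i/L_k$ tends to $0$, and since $\widetilde{\psi}(0)\geq\inf\psi>0$ one gets
\[
L_k\,\widetilde{\psi}\!\left(\frac{\alpha_i}{L_k}\right)\;\geq\;L_k\,\widetilde{\psi}(0)\;\longrightarrow\;+\infty,
\]
not $\theta\alpha_i$. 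The correct mechanism is the opposite one: concentrate mass $\alpha_i$ on a \emph{short} arc of length $\ell_k\to0$ of the existing graph (no spike), with density $\alpha_i/\ell_k\to+\infty$, so that
\[
\ell_k\,\widetilde{\psi}\!\left(\frac{\alpha_i}{\ell_k}\right)
=\alpha_i\cdot\frac{\widetilde{\psi}(\alpha_i/\ell_k)}{\alpha_i/\ell_k}\longrightarrow\theta\,\alpha_i.
\]
This is exactly what the paper does in Proposition~\ref{step1limsup}: the singular part is replaced by averages $\mu(\Gamma\cap Q_k^j)/\ho(\Gamma\cap Q_k^j)$ over shrinking squares, which blow up, and the subadditivity of $\widetilde{\psi}$ and $\psi^c$ together with the definition of $\theta$ give the desired inequality. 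A related slip: at a \emph{jump} the relaxed density is $\widetilde{\psi}$, not $\psi^c$; only genuine cuts see $\psi^c$, because there the approximating graphs pass \emph{twice} near the vertical segment.

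\medskip
\textbf{The liminf at cuts: different route.} Your blow--up/arc--length idea is plausible but the paper avoids it entirely. There one fixes a thin rectangle $\mathcal{R}^\varepsilon_\delta$ around each cut and, crucially, splits it not along the limiting cut line $\{x=x^i\}$ but along the moving line $\{x=x_k\}$, where $(x_k,h_k(x_k))\to(x^i,h(x^i))$ is obtained from the Hausdorff--complement convergence. This produces two sets of finite perimeter $E_k^\ell$, $E_k^r$ whose reduced boundaries contain the two ``passes'' of $\Gamma_{h_k}$ near the cut, and one applies the ready--made lower semicontinuity result of \cite{CarCriDie} (Theorem~\ref{cc} here) to each side separately, obtaining $\widetilde{\psi}(f)+\widetilde{\psi}(g)\geq\psi^c(u)$ with $f+g=u$. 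This sidesteps the double--counting issue you flag as the main obstacle without any arc--length parametrization. Your blow--up approach could in principle be made to work, but note that at a cut the reduced boundary of $\Omega_h$ does \emph{not} see the vertical segment (the set has full density there), so a naive perimeter blow--up loses the cut; you would need to blow up something like the graphs themselves as $1$--rectifiable varifolds, which is more delicate than what you sketch.
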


\begin{theorem}\label{thm:main_mass}
	Fix $M,m>0$. Denote by $\mathcal{A}_r(m,M)$ the triples $(\Omega_h,v,\mu)\in \mathcal{A}_r$ such that 
	\[
	\int_{\Gamma_h} u(\textbf{x})  \dho(\textbf{x})=m,\quad\quad\quad
	\mathcal{L}^2\left(\Omega_h\cap\{ y\geq 0 \}\right) = M,
	\]
	and by $\mathcal{A}(m,M)$ the triples $(\Omega_h,v,\mu)\in \mathcal{A}$ such that 
	\[
	\mu(\Gamma_h)=m,\quad\quad\quad
	\mathcal{L}^2\left(\Omega_h\cap\{ y\geq 0 \}\right) = M.
	\]
	Define
	\[
	\mathcal{H}(\o_h,v,\mu)\coloneqq 
	\left\{
	\begin{array}{ll}
		\mathcal{F}(\o_h,v,\mu) & \text{ if } (\o_h,v,\mu)\in\mathcal{A}_r(m,M),\\[5pt]
		+\infty & \text{ else}.
	\end{array}
	\right.
	\]
	Then, the relaxation of $\mathcal{H}$ in the above topology is given by
	\[
	\overline{\mathcal{H}}(\o_h,v,\mu) =
	\left\{
	\begin{array}{ll}
		\mathcal{G}(\o_h,v,\mu) & \text{ if } (\o_h,v,\mu)\in\mathcal{A}(m,M),\\[5pt]
		+\infty & \text{ else},
	\end{array}
	\right.
	\]
	where $\mathcal{G}(\o_h,v,\mu)$ denotes the right-hand side of the representation formula of Theorem \ref{thm:main_no_mass}.
	Namely, the mass constraint is maintained by the relaxation procedure.
\end{theorem}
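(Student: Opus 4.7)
The proof reduces to establishing the $\Gamma$-liminf and $\Gamma$-limsup inequalities for $\mathcal{H}$, since the unconstrained result Theorem~\ref{thm:main_no_mass} gives the bulk of the energy analysis. The main point is that both the total adatom mass and the area of $\Omega_h \cap \{y \geq 0\}$ are continuous functionals under the chosen topology, which is what makes the two constraints survive the relaxation.

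For the $\Gamma$-liminf inequality, I take a sequence $(h_k, v_k, u_k) \in \mathcal{A}_r(m, M)$ converging to $(h, v, \mu) \in \mathcal{A}$. The energy bound $\liminf_k \mathcal{F}(h_k, v_k, u_k) \geq \mathcal{G}(h, v, \mu)$ is immediate from Theorem~\ref{thm:main_no_mass}, so it only remains to check $(h, v, \mu) \in \mathcal{A}(m, M)$; this also forces $\overline{\mathcal{H}} \equiv +\infty$ outside $\mathcal{A}(m, M)$. The area constraint passes to the limit because the $\bv$ bound on $h_k$, which follows from \eqref{eq:lower_bound_psi}, yields $L^1$ convergence of $\chi_{\Omega_{h_k}}$ to $\chi_{\Omega_h}$, hence $\mathcal{L}^2(\Omega_h \cap \{y \geq 0\}) = M$. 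For the mass constraint, all graphs $\Gamma_{h_k}$ sit in a common bounded set, so testing the weak-$\ast$ convergence $u_k \ho \llcorner \Gamma_{h_k} \wtom \mu$ against a smooth cutoff equal to $1$ on this set gives $m = \mu(\Gamma_h)$.

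For the $\Gamma$-limsup, I start from the unconstrained recovery sequence $(\tilde h_k, \tilde v_k, \tilde u_k) \in \mathcal{A}_r$ produced by Theorem~\ref{thm:main_no_mass}, and denote by $\tilde m_k \to m$ and $\tilde M_k \to M$ its adatom mass and area. The task is to perturb it so that both constraints are met exactly while preserving the convergence to $(h, v, \mu)$ and the energy limit. I would choose an interval $I \subset (a, b)$ on which $h$ is continuous and $\mu$ is absolutely continuous with respect to $\ho \llcorner \Gamma_h$ with bounded density; such an $I$ exists since $h \in \bv(a, b)$ has at most countably many jumps and cuts and the singular part of $\mu$ is finite. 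On $I$, I would modify $\tilde h_k$ by adding a smooth bump $\eta_k \varphi$, with $\eta_k$ of order $M - \tilde M_k \to 0$, calibrated so that the new area is exactly $M$; extend $\tilde v_k$ to the small new region by a minimizer of the elastic energy with the correct boundary trace, which costs $o(1)$; and, on a disjoint sub-interval $I' \subset I$, adjust $\tilde u_k$ by adding a small positive density, or removing a small amount from a subregion where $\tilde u_k$ is bounded away from zero, so that the total mass becomes exactly $m$.

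The main obstacle is the joint calibration of the two constraints without interference, together with the fact that $\psi$ is only Borel, so a naive pointwise perturbation of $\tilde u_k$ need not give a controlled change in $\int \psi(\tilde u_k) \dho$. I would handle the interference by performing the two corrections on disjoint sub-intervals of $I$, and the Borel issue by replacing $\tilde u_k$ on $I'$ not by a pointwise perturbation but by a small new piece of recovery construction tailored to a slightly modified target density, whose cost is controlled via the continuity of $\widetilde{\psi}$ on $(0, +\infty)$, which holds since $\widetilde{\psi}$ is convex, together with the sub-additivity and recession properties already used in Theorem~\ref{thm:main_no_mass}. Once every adjustment is performed, the modified sequence lies in $\mathcal{A}_r(m, M)$, still converges to $(h, v, \mu)$ in the prescribed topology, and satisfies $\mathcal{F}(\tilde h_k, \tilde v_k, \tilde u_k) \to \mathcal{G}(h, v, \mu)$, which yields $\overline{\mathcal{H}}(h, v, \mu) \leq \mathcal{G}(h, v, \mu)$ and completes the proof.
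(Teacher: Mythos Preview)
Your liminf argument is essentially the paper's: Theorem~\ref{liminfinequality} is proved with no mass constraint, and the two constraints pass to the limit by exactly the continuity arguments you give.

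For the limsup your route differs from the paper's, and the gap is in the density-correction step. The paper does \emph{not} first produce an unconstrained recovery sequence from Theorem~\ref{thm:main_no_mass} and then correct it; rather, Theorem~\ref{limsupinequality} builds the recovery sequence directly inside $\mathcal{A}_r(m,M)$, keeping both constraints at every intermediate stage (Propositions~\ref{step1limsup}, \ref{propfinitenumbercuts}, \ref{proposition3}). In each step the approximate profile is arranged to lie on a known side of $h$ (the Yosida transform gives $\hat h_k\le h$, the wriggled profile gives $\bar h_k\ge h$), so the area correction is a vertical translation or rescaling of explicit sign, and the density is simultaneously renormalised to keep the adatom mass equal to $m$. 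In fact the paper deduces the unconstrained Theorem~\ref{thm:main_no_mass} from this constrained limsup, so using Theorem~\ref{thm:main_no_mass} as a black box to obtain the sequence you want to perturb is circular in the paper's logical order.

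Setting circularity aside, your proposed fix for the adatom mass is where the argument is incomplete. You rightly note that a pointwise shift of $\tilde u_k$ is uncontrolled because $\psi$ is only Borel, and propose to ``replace $\tilde u_k$ on $I'$ by a small new piece of recovery construction tailored to a slightly modified target density''. But making this precise means rerunning the full recovery machinery (convex-envelope approximation plus wriggling) on $I'$ for a $k$-dependent target, matching boundary values at $\partial I'$, and diagonalising; that is not a perturbation of a given sequence but a reconstruction, and at that point you are reproducing the paper's strategy rather than shortcutting it. The area correction is less serious but also not complete as stated: you need $h>0$ on $I$ to allow a negative bump when $\tilde M_k>M$, and the claim that an elastic extension of $\tilde v_k$ into the bump region costs $o(1)$ needs an argument (the paper sidesteps this by translating $v$ vertically, see \eqref{displacementrestriction}, rather than solving an auxiliary elastic problem).
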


\begin{remark}
	In general, it is not possible to say more on the singular part of the measure.
\end{remark}

\begin{remark}
	The more general case, where the adatom density is vector valued (corresponding to different materials deposited on the substrate) and the surface energy is anisotropic are currently under investigation.
\end{remark}

%%%%%%%%%%%%%%%%%%%%%%%%%%%%%%%%%%%%
%%%%%%%%%%%%%%%%%%%%%%%%%%%%%%%%%%%%
%%%%%%%%%%%%%%%%%%%%%%%%%%%%%%%%%%%%
%%%%%%%%%%%%%%%%%%%%%%%%%%%%%%%%%%%%

\section{Strategy of the proof}

Now, we would like to comment on the strategy to prove the main results. First of all, in Theorem \ref{liminfinequality} we will prove the liminf inequality for the case of no mass constraint, and in Theorem \ref{limsupinequality} the limsup inequality for the case with the mass constraint. These theorems will give both Theorem \ref{thm:main_no_mass}, and Theorem \ref{thm:main_mass}.

Similarly for functional considered in \cite{FonFusLeoMor07}, the bulk and the surface terms of the energy do not interact in the relaxation process. Since the former is quite standard, we will comment on how to deal with the latter. In this lies the novelty of the paper. Our strategy relies on ideas inspired by results obtained in  \cite{CarCriDie}.
The main difference with the case treated in that paper is the graph constraint. This reflects on the fact that oscillations of the thin film profile must be in the vertical direction in order to preserve such a constraint, and that cracks can be created only in a specific way. The former term only gives technical challenges, while the latter is responsible for the different energy densities $\widetilde{\psi}$ and $\psi^c$. Despite this, note that the recession coefficients for the singular part of the measure in the two parts of the extended graph (the cuts, and the rest of the graph) agree.\\

Let us discuss the strategy for the liminf inequality for the surface terms. We avoid mentioning the fine details and focus instead on the main ideas.
Let $(h_k)_{k\in\N}$ be a sequence of Lipschitz functions such that $\R^2\setminus\Omega_{h_k}$ converge to $\R^2\setminus\Omega_h$, for some function $h$ of bounded variation.
This implies that $\Omega_{h_k}$ converges to $\Omega_h$ in $L^1$ (see Lemma  \ref{lem:Hausd_L1}).
Let $(u_k)_{k\in\N}$ the be adatom densities defined on each $\Gamma_{h_k}$, and let $\mu=u\ho\llcorner\Gamma_h+\mu^s$ be the limiting measure.
We need to prove that
\begin{equation}\label{eq:liminf_intro_1}
	\liminf_{k\to\infty} \int_{\Gamma_{h_k}} \psi\big(u_k(\textbf{x}) \big) \dho(\textbf{x})
	\geq \int_{\widetilde{\Gamma}_h}\widetilde{\psi}\big(u(\textbf{x})\big)\dho(\textbf{x})
	+ \int_{\Gamma^c_h} \psi^c\big(u(\textbf{x})\big)\dho(\textbf{x}) 
	+\theta\mu^s(\Gamma_h).
\end{equation}
The idea is to separate the contribution that the energy on the left-hand side has
on a neighborhood of each cut of $h$, and on the other part of the graph of $h$.
Despite there might be a countable number of cuts, it is just a technicality to show that we can reduce to finitely many of them (see the beginning of the proof of Theorem \ref{liminfinequality}).
\blue{Thus, let us assume that the final configuration described by $h$ has finitely many cuts. Since the energy is local, for the sake of simplicity, we will consider the case where there is one single cut. In case the measure $\mu$ has a Dirac delta at the point $P$ (see Figure \ref{fig:strategy_intro}), we want to count its contribution to the energy as part of the energy of the regular part of $\widetilde{\Gamma}_h$. For this reason, we take $\varepsilon>0$ and consider a rectangle $R_\varepsilon$ around the cut as in Figure \ref{fig:strategy_intro}. }

\begin{figure}
	\begin{center}
		\includegraphics[scale=1.1]{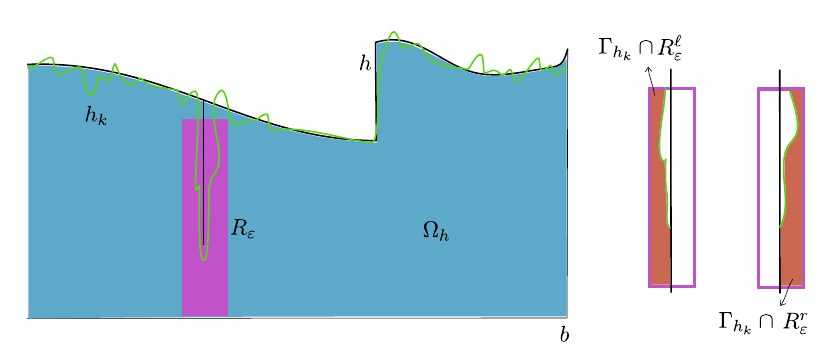}
		\caption{In order to get the liminf inequality, we separate the effects on a neighborhood $R_\varepsilon$ of the cut, and outside of it.}
		\label{fig:strategy_intro}
	\end{center}
\end{figure}

Now, we claim that
\begin{equation}\label{eq:liminf_intro_2}
	\liminf_{k\to\infty} \int_{\Gamma_{h_k}\setminus R_\varepsilon} \psi\big(u_k(\textbf{x})\big) \dho(\textbf{x})
	\geq \int_{\widetilde{\Gamma}_h\setminus R_\varepsilon}\widetilde{\psi}\big(u(\textbf{x})\big)\dho(\textbf{x})
	+\theta\mu^s(\widetilde{\Gamma}_h\setminus R_\varepsilon),
\end{equation}
and that
\begin{equation}\label{eq:liminf_intro_3}
	\liminf_{k\to\infty} \int_{\Gamma_{h_k}\cap R_\varepsilon} \psi\big(u_k(\textbf{x})\big) \dho(\textbf{x})
	\geq \int_{\Gamma^c_h\setminus R_\varepsilon} \psi^c\big(u(\textbf{x})\big)\dho(\textbf{x}) + \theta\mu^s(\Gamma^c_h\cap R_\varepsilon).
\end{equation}
Given \eqref{eq:liminf_intro_2} and \eqref{eq:liminf_intro_3}, we obtain the desired liminf inequality \eqref{eq:liminf_intro_1} by sending $\varepsilon$ to zero.

To obtain both  \eqref{eq:liminf_intro_2} and \eqref{eq:liminf_intro_3}, we rely on (a localized version of) the lower semicontinuity result proved in \cite[Theorem 5]{CarCriDie} (see Theorem \ref{cc}).
In the first case, the idea is to view the  graph of each $h_k$, and the regular and the jump part of extended graph of $h$ as ($\ho$-equivalent to) the reduced boundaries of the corresponding epigraphs.

For \eqref{eq:liminf_intro_3}, we instead have to consider the contributions of the surface energy from both sides of the crack. Therefore, we reason as follows: the rectangle $R_\varepsilon$ in Figure \ref{fig:strategy_intro} is split by the vertical line passing through the crack in two parts, one on the left and one on the right. Call them $R_\varepsilon^\ell$, and $R_\varepsilon^r$, respectively.
Then, we consider the sets $\Omega_{h_k}\cap R_\varepsilon^\ell$ and $\Omega_{h_k}\cap R_\varepsilon^r$.
Since $\R^2\setminus \Omega_{h_k}\to \R^2\setminus\Omega_h$ in the Hausdorff topology, they converge in $L^1$ to $R_\varepsilon^\ell$, and $R_\varepsilon^r$, respectively. Moreover, it holds
\[
u_k\ho\llcorner (\Gamma_{h_k}\cap R_\varepsilon^\ell)\wtom \mu^\ell = u^\ell\llcorner(\Gamma^c_h\cap R_\varepsilon) + \mu^\ell_s,
\]
\[
u_k\ho\llcorner (\Gamma_{h_k}\cap R_\varepsilon^r)\wtom \mu^r = u^r\llcorner(\Gamma^c_h\cap R_\varepsilon) + \mu^r_s.
\]
Thus, thanks to the lower semicontinuity result (see Theorem \ref{cc}), we get that
\[
\liminf_{k\to\infty} \int_{\Gamma_{h_k}\cap R_\varepsilon^\ell} \psi\big(u_k(\textbf{x})\big) \dho(\textbf{x})
\geq \int_{\Gamma^c_h\setminus R_\varepsilon} \widetilde{\psi}\big(u^\ell(\textbf{x})\big)\dho(\textbf{x}) + \theta\mu^\ell_s(\Gamma^c_h\cap R_\varepsilon)
\]
and
\[
\liminf_{k\to\infty} \int_{\Gamma_{h_k}\cap R_\varepsilon^r} \psi\big(u_k(\textbf{x})\big) \dho(\textbf{x})
\geq \int_{\Gamma^c_h\setminus R_\varepsilon} \widetilde{\psi}\big(u^r(\textbf{x})\big)\dho(\textbf{x}) + \theta\mu^r_s(\Gamma^c_h\cap R_\varepsilon).
\]
We then show that $u^\ell+u^r = u$, and $\mu^\ell_s+\mu^r_s=\mu_s$. Thus, by definition of $\psi^c$, we obtain
\[
\psi^c\big(u(\textbf{x})\big) \leq  \widetilde{\psi}\big(u^r(\textbf{x})\big)
+  \widetilde{\psi}\big(u^\ell(\textbf{x})\big).
\]
This gives \eqref{eq:liminf_intro_3}, and, in turn, the desired liminf inequality for the surface energy.\\

We now discuss the strategy for the limsup inequality for the surface energy. This is more involved, and requires several steps.
The idea is to reduce to the situation where the limiting profile $h$ is Lipschitz, and the adatom measure $\mu$ is a piecewise constant density (more precisely, it is possible to find a square grid where the density has the same value on each of the parts of the graph inside each of these squares).
In such a case, in Proposition \ref{proposition3} we construct a sequence $(\o_{h_k},v_k,\mu_k)_k$ that satisfies the mass constraints such that
\begin{equation}\label{eq:conv_energy_limsup1_intro}
	\limsup_{k\to\infty} \int_{\widetilde{\Gamma}_{h_k}} \psi\big(u_k(\textbf{x})\big)\dho(\textbf{x}) \leq \int_{\widetilde{\Gamma}_{h}}\widetilde \psi\big(u(\textbf{x})\big)\dho(\textbf{x}).
\end{equation}
Without loss of generality (see Lemma \ref{lem:char_psi_s0}), we can assume $\psi$ to be convex. Then, $\psi$ and $\widetilde{\psi}$ agree on $[0,s_0)$, for some $s_0\in(0,+\infty]$. In particular, if $s_0<+\infty$ the function $\widetilde{\psi}$ is linear on $(s_0,+\infty)$ (see Lemma \ref{lem:char_psi_s0})). Thus, in squares where $u\leq s_0$, we define $h_k$ as $h$ and $u_k$ as $u$. We just have to care about those squares $Q$ where $u>s_0$. The energy in such a square is $\widetilde{\psi}(u)\ho(\Gamma_h\cap Q)$. The idea is to write
\[
\widetilde{\psi}(u)\ho(\Gamma_h\cap Q)
= \widetilde{\psi}(r s_0)\ho(\Gamma_h\cap Q)
= r \widetilde{\psi}(s_0)\ho(\Gamma_h\cap Q)
= \psi(s_0) \left[ r\ho(\Gamma_h\cap Q)\right],
\]
for some $r>1$, where in the last step we used the fact that $\psi(s_0)=\widetilde{\psi}(s_0)$. Then, we want to obtain the quantity $r\ho(\Gamma_h\cap Q)$ as the length of an oscillating profile $h_k$ in $Q$, and define $u_k$ as $s_0$. This ensures the validity of \eqref{eq:conv_energy_limsup1_intro}.
Such a construction is done in Proposition \ref{step3limsup}, where we prove an extension of the so called \emph{wriggling lemma} (see \cite[Lemma 4]{CarCriDie}). Namely, given a Lipschitz function $f:(a,b)\to[0,+\infty)$, and a number $r>1$, there exists a sequence of graphs $f_n:(a,b)\to[0,+\infty)$ with $\ho\llcorner\Gamma_{f_n}\wtom r\ho\llcorner\Gamma_{f}$ as $n\to\infty$, such that
\[
\ho(\Gamma_{f_n}) = r \ho(\Gamma_f),
\]
and $f_n(a)=f(a)$, $f_n(b)=f(b)$, for each $n\in\N$, and satisfying other technical properties (see Proposition \ref{step3limsup} for the precise statement).
What the above inequality is using is a quantitative lack of lower semicontinuity of the perimeter. The difference with the result in \cite[Lemma 4]{CarCriDie} is that only vertical oscillations are allowed. Moreover, we also fill in details that were not fully explained in that paper.
Note that in our case, there is an additional technical difficulty to be faced: ensuring that both mass constraints are satisfied by each $(\o_{h_k},v_k,\mu_k)$ will be achieved by carefully modifying both the profile and the density. Note that modifications of the graphs have to be done in such a way that the profile is always non-negative.

In order to reduce from a general profile $(\o_h,v,\mu)\in\mathcal{A}(m,M)$ to the above case, we argue as follows.
First of all, by using averages, we prove that it suffices to consider the situation where the adatom measure $\mu$ is a piecewise constant function (see Proposition \ref{step1limsup}).
Then, we need to approximate a general profile $h\in \bv(a,b)$ with a sequence of Lipschitz profiles $(h_k)_{k\in\N}$, and corresponding piecewise constant adatom densities $(u_k)_{k\in\N}$, in such a way that
\begin{align}\label{eq:conv_en_intro}
	\lim_{k\to\infty} \int_{\widetilde{\Gamma}_{h_k}}\widetilde{\psi}\big(u_k(\textbf{x})\big)\dho(\textbf{x})
	&+\int_{\Gamma^c_{h_k}}\psi^c\big(u_k(\textbf{x})\big)\dho(\textbf{x}) \nonumber \\
	&= \int_{\widetilde{\Gamma}_{h}}\widetilde{\psi}\big(u(\textbf{x})\big)\dho(\textbf{x})
	+\int_{\Gamma^c_{h}}\psi^c\big(u(\textbf{x})\big)\dho(\textbf{x}).
\end{align}
This is done in Proposition \ref{propfinitenumbercuts}. In order to obtain the approximation of the profiles, we employ an idea by Bonnettier and Chambolle in Section 5.2 of \cite{BonCha02}, later adapted to the case of graphs in \cite[Lemma 2.7]{FonFusLeoMor07}: to use the Moreau-Yosida transform to define a Lipschitz approximation of $h$ to the left and to the right of each cut (again, we are reducing to the case of finitely many of them). To also approximate the cracks, we use a linear interpolation. As for defining the adatom density on the graph of $h_k$, we exploit the fact that the Hausdorff convergence of $\R^2\setminus\Omega_{h_k}$ to $\R^2\setminus\Omega_h$ implies that the graphs $(h_k)_{k\in\N}$ are converging in the Hausdorff topology to $h$. In particular, for $k$ large enough, the graphs of the $h_k$'s will be inside the same squares where the graph of $h$ is. This allows to define $u_k$ on the part of the graph of $h_k$ inside a square, as the value that $u$ has inside that square.
Then, the convergence of the energy required in \eqref{eq:conv_en_intro} is ensured since the length of the graph of $h_k$ inside each cube converges to the length of $h$ inside the same cube.

%%%%%%%%%%%%%%%%%%%%%%%%%%%%%%%%%%%%
%%%%%%%%%%%%%%%%%%%%%%%%%%%%%%%%%%%%
%%%%%%%%%%%%%%%%%%%%%%%%%%%%%%%%%%%%
%%%%%%%%%%%%%%%%%%%%%%%%%%%%%%%%%%%%

\section{Preliminaries}

We here introduce the main definition and basic results that will be used throughout the paper.

%%%%%%%%%%%%%%%%%%%%%%%%%%%%%%%%%%%%
%%%%%%%%%%%%%%%%%%%%%%%%%%%%%%%%%%%%
%%%%%%%%%%%%%%%%%%%%%%%%%%%%%%%%%%%%
%%%%%%%%%%%%%%%%%%%%%%%%%%%%%%%%%%%%

\subsection{Function of (pointwise) bounded variation in one dimension}

We start with functions of (pointwise) bounded variation in one dimension. A comprehensive treatment of this topic can be found in the book \cite{Leoni_Book} by Leoni.

\begin{definition}
	Let $h:(a,b)\to\R$. We say that $h$ is a function of \emph{pointwise bounded variation} in $(a,b)$ if $\mathrm{Var}(h)<+\infty$, where
	\[
	\mathrm{Var}(h) \coloneqq \sup\left\{ \sum_{i=1}^k |h(x_i) - h(x_{i-1})|  \right\},
	\]
	where the supremum is taken over all finite partitions of $(a,b)$. In this case, we write $h\in \text{BVP}(a,b)$.
\end{definition}

The main properties of functions of pointwise bounded variations that will be used in the paper are collected in the following result (see \cite[Theorem 2.17, Theorem 2.36]{Leoni_Book}).

\begin{theorem}\label{thm:properties_PBV}
	Let $h\in \emph{BPV}(a,b)$. Then, the limits
	\[
	h(x^-)\coloneqq \lim_{y\to x^-} h(y),\quad\quad\quad 
	h(x^+)\coloneqq \lim_{y\to x^+} h(y),
	\]
	exist for all $x\in(a,b)$. In particular, if we define the functions
	\[
	h^-(x)\coloneqq\min\{h(x^+),h(x^-)\} , \quad\quad\quad
	h^+(x)\coloneqq\max\{h(x^+),h(x^-)\},
	\]
	we have that there are at most countably many points $x\in(a,b)$ for which $h^-(x)$, $h^+(x)$ and $h(x)$ do not agree.
	Finally, $h$ admits a lower semi-continuous representative.
\end{theorem}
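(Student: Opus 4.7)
The plan is to establish the three assertions in turn, each by a standard argument tailored to the pointwise (rather than distributional) definition of bounded variation.

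For the existence of the one-sided limits, I would pass through the Jordan decomposition. Setting $V_h(x) \coloneqq \mathrm{Var}(h; (a,x))$ and using the inequalities $\pm(h(x_i) - h(x_{i-1})) \leq |h(x_i) - h(x_{i-1})|$ inside the supremum that defines the variation, both $V_h + h$ and $V_h - h$ turn out to be monotone non-decreasing on $(a,b)$. Since monotone real functions admit one-sided limits at every interior point, so does their difference
\[
h = \tfrac{1}{2}\big[(V_h + h) - (V_h - h)\big].
\]

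For the countability statement, let $D$ denote the set where $h^-(x)$, $h^+(x)$, and $h(x)$ fail to all agree, and split $D = D_1 \cup D_2$ into genuine jumps ($h(x^-) \neq h(x^+)$) and removable discontinuities ($h(x^-) = h(x^+) \neq h(x)$). Given any $x_1 < \cdots < x_n$ in $D_1$ and $\varepsilon > 0$, I would insert sample points $y_i^- < x_i < y_i^+$ so close to each $x_i$ that the partition of $(a,b)$ they generate has associated variation sum at least $\sum_{i=1}^n \big(h^+(x_i) - h^-(x_i)\big) - \varepsilon$; letting $n \to \infty$ yields $\sum_{x \in D_1}\big(h^+(x) - h^-(x)\big) \leq \mathrm{Var}(h)$ and hence countability of $D_1$. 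An analogous partition with $x_i$ itself inserted gives $\sum_{x \in D_2} 2|h(x) - h(x^\pm)| \leq \mathrm{Var}(h)$, so that $D_2$ is also at most countable.

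Finally, I would produce the lsc representative by setting
\[
\widetilde{h}(x) \coloneqq h^-(x) = \min\{h(x^-), h(x^+)\}.
\]
By the previous step $\widetilde{h}$ coincides with $h$ off the at-most-countable set $D$, so it is a Lebesgue-a.e.\ representative. To verify lower semicontinuity at an arbitrary $x$, I would again invoke the Jordan decomposition $h = g_1 - g_2$ with each $g_i$ monotone non-decreasing. For such $g_i$, left-continuity of $g_i(\cdot^-)$ gives $g_i(y^-) \to g_i(x^-)$ as $y \to x^-$, while $g_i(y^+)$ is squeezed between $g_i(y)$ and $g_i(x^-)$ on $(y, x)$, hence also tends to $g_i(x^-)$; symmetrically $g_i(y^\pm) \to g_i(x^+)$ as $y \to x^+$. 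Subtracting, $h(y^\pm) \to h(x^-)$ from the left and $h(y^\pm) \to h(x^+)$ from the right, so $\widetilde{h}(y) \to h(x^-)$ and $\widetilde{h}(y) \to h(x^+)$ on the two sides respectively; both limits dominate $\widetilde{h}(x)$ by the very definition of the minimum, whence $\liminf_{y \to x}\widetilde{h}(y) \geq \widetilde{h}(x)$.

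The main obstacle is the bookkeeping in the second step: one must arrange the auxiliary partitions so that contributions from different jumps (or removable discontinuities) accumulate additively rather than cancel, and that the sample points $y_i^\pm$ can simultaneously witness the jump at $x_i$ and remain compatible with the global partition of $(a,b)$. Once this is set up carefully, the rest of the argument is routine.
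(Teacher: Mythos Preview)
Your argument is correct and follows the classical route (Jordan decomposition for the one-sided limits, summability of jump and removable-discontinuity sizes for countability, and $\widetilde h=h^-$ for the lower semicontinuous representative). Note, however, that the paper does not actually supply a proof of this theorem: it is stated as a preliminary result with a citation to \cite[Theorem~2.17, Theorem~2.36]{Leoni_Book}, so there is no in-paper argument to compare against. Your self-contained proof is essentially the one found in that reference; the only cosmetic point is that the definition $V_h(x)=\mathrm{Var}(h;(a,x))$ with an open left endpoint makes the inequality $V_h(y)-V_h(x)\geq |h(y)-h(x)|$ slightly awkward to state---fixing a base point $c\in(a,b)$ and using $\mathrm{Var}(h;[c,x])$ makes the Jordan decomposition cleaner, but this does not affect the substance.
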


We now connect functions of pointwise bounded variation with those of bounded variation.

\begin{definition}
	Let $u\in L^1(a,b)$. We say that $u$ has \emph{bounded variation} in $(a,b)$ if there exists a Radon measure $\mu$ such that
	\[
	\int_a^b u\varphi'\, \text{d}x = - \int_{(a,b)} \varphi\, \text{d}\mu,
	\]
	for all $\varphi\in C^1_c(a,b)$. In this case, we write $u\in \bv(a,b)$, and we denote the measure $\mu$ by $\text{D}u$.
\end{definition}

The relation between functions of pointwise bounded variation and functions of bounded variation is given by the following result (see \cite[Theorem 7.3]{Leoni_Book}).

\begin{theorem}
	Let $u\in \emph{BV}(a,b)$. Then, there exists a right-continuous function $h\in \emph{BVP}(a,b)$ with $u(x)=h(x)$ for a.e. $x\in(a,b)$ such that $\mathrm{Var}(h)=|\emph{D}u|(a,b)$.
\end{theorem}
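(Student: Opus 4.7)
The plan is to define $h$ directly from the measure $\mathrm{D}u$ as its distribution function, and then read off the three required properties (right-continuity, pointwise almost everywhere equality with $u$, and matching of total variation with $|\mathrm{D}u|(a,b)$) from elementary properties of finite Radon measures. Concretely, fix any $x_0\in(a,b)$ and a constant $u_0\in\R$ to be determined, and set
$$h(x) := u_0 + \mathrm{D}u\bigl((x_0,x]\bigr)\ \text{for } x\geq x_0, \qquad h(x) := u_0 - \mathrm{D}u\bigl((x,x_0]\bigr)\ \text{for } x<x_0.$$
Right-continuity of $h$ then follows from the continuity from above of the set function $x\mapsto \mathrm{D}u((x_0,x])$, which in turn is a consequence of $\mathrm{D}u$ being a finite signed Radon measure (and hence enjoying monotone convergence along intersections of decreasing sequences of half-open intervals).

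To identify $h$ with $u$ almost everywhere, I would verify the distributional identity $\mathrm{D}h=\mathrm{D}u$. Unwinding the definition of $h$ and applying Fubini's theorem on a suitable cut of $(a,b)\times(a,b)$ yields, for every $\varphi\in C^1_c(a,b)$,
$$\int_a^b h(x)\,\varphi'(x)\,\mathrm{d}x \;=\; -\int_{(a,b)} \varphi(y)\,\mathrm{d}(\mathrm{D}u)(y).$$
Comparing with the defining identity of $u\in\bv(a,b)$ gives $\mathrm{D}(u-h)=0$, and the standard one-dimensional characterization of $\bv$ functions with vanishing distributional derivative shows that $u-h$ agrees with a constant a.e.; choosing $u_0$ to cancel this constant produces $u=h$ a.e.

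The identity $\mathrm{Var}(h)=|\mathrm{D}u|(a,b)$ is the heart of the statement. The easy direction is immediate from the triangle inequality for the measure $|\mathrm{D}u|$: for any partition $a<x_0<\dots<x_k<b$,
$$\sum_{i=1}^k |h(x_i)-h(x_{i-1})| \;=\; \sum_{i=1}^k \bigl|\mathrm{D}u((x_{i-1},x_i])\bigr| \;\leq\; \sum_{i=1}^k |\mathrm{D}u|\bigl((x_{i-1},x_i]\bigr) \;\leq\; |\mathrm{D}u|(a,b),$$
whence $\mathrm{Var}(h)\leq |\mathrm{D}u|(a,b)$ after taking the supremum.

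The matching lower bound is the step I expect to be the one delicate point, since one has to produce partitions that essentially separate the positive and negative parts of $\mathrm{D}u$. To do this I would use the Jordan decomposition $\mathrm{D}u=\mu^+-\mu^-$ together with a Hahn partition $(a,b)=A\sqcup B$ on which $\mu^+$ and $\mu^-$ are respectively concentrated. Inner regularity of the finite Radon measure $|\mathrm{D}u|$ lets me select, for every $\varepsilon>0$, compact sets $K^+\subset A$ and $K^-\subset B$ with $\mu^\pm\bigl((a,b)\setminus K^\pm\bigr)<\varepsilon$; since $K^+$ and $K^-$ are disjoint compact subsets of $(a,b)$, I can then build a finite partition of $(a,b)$ whose subintervals are each contained either in a small neighborhood of $K^+$ or in a small neighborhood of $K^-$, so that $\mathrm{D}u$ has essentially constant sign on each one. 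The corresponding sum $\sum_i |\mathrm{D}u((x_{i-1},x_i])|$ then agrees with $|\mathrm{D}u|(a,b)$ up to $O(\varepsilon)$, and sending $\varepsilon\to 0$ yields the reverse inequality and thus the claim.
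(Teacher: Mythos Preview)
The paper does not supply its own proof of this statement; it simply cites \cite[Theorem 7.3]{Leoni_Book}. So there is nothing to compare against on the paper's side.

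Your argument is the standard one and is correct in outline. A couple of small points worth tightening. First, in Step~5 you should make explicit why the partition can be chosen so that no subinterval meets both $K^+$ and $K^-$: since these are disjoint compacts there is $\delta>0$ with $\mathrm{dist}(K^+,K^-)\geq\delta$, and any partition with mesh $<\delta$ does the job; without this the sentence ``whose subintervals are each contained either in a small neighborhood of $K^+$ or in a small neighborhood of $K^-$'' is not literally true (some intervals may meet neither), though the estimate still goes through once you sort intervals by which compact they meet. Second, the definition of $\mathrm{Var}$ in the paper uses partition points strictly inside $(a,b)$, so you should remark that the mass $|\mathrm{D}u|$ near the endpoints can be made $<\varepsilon$ by inner regularity before choosing the partition. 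With these cosmetic adjustments the proof is complete.
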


Finally, we recall that the subgraph of a function of bounded variation is a set of finite perimeter (see \cite[Theorem 14.6]{Giusti_book}), and that its reduced boundary coincides with the non cut part of the extended graph (see \cite[Theorem 4.5.9 (3)]{Federer_book}.

\begin{lemma}\label{lem:subgraph_finite_per}
	Let $h\in\bv(a,b)$. Then, the epigraph $\Omega_h$ has finite perimeter in $(a,b)\times\R$, and
	\[
	\ho( \widetilde{\Gamma}_h \,\triangle\, \partial^*\Omega_h ) = 0,
	\]
	where $\partial^*\Omega_h$ is the reduced boundary of $\Omega_h$.
\end{lemma}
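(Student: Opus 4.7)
The plan is to deduce both claims from the classical theory of subgraphs of BV functions, essentially unpacking the structure theorems cited just before the lemma. For the finite perimeter assertion, I would fix the lower semi-continuous representative $\tilde{h}$ supplied by the previous theorem and show that $\mathbf{1}_{\Omega_h} \in BV((a,b)\times\R)$ via mollification. Setting $h_\varepsilon \coloneqq \tilde{h} * \rho_\varepsilon$ gives smooth approximations with $\Omega_{h_\varepsilon} \to \Omega_h$ in $L^1$, so lower semicontinuity of perimeter combined with the smooth identity $P(\Omega_{h_\varepsilon}; (a,b)\times\R) = \int_a^b \sqrt{1+|h_\varepsilon'|^2}\,\dd x$ and the BV bound $\int_a^b \sqrt{1+|h_\varepsilon'|^2}\,\dd x \leq (b-a) + |Dh|(a,b) + o(1)$ yields $P(\Omega_h; (a,b)\times\R) < +\infty$ (plus the obvious left, right and bottom contributions).

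For the identification $\ho(\widetilde{\Gamma}_h \,\triangle\, \partial^*\Omega_h) = 0$, I would partition $\partial \Omega_h \cap ((a,b)\times\R)$ according to the local behavior of $\tilde{h}$ and compute the two-dimensional density of $\Omega_h$ at each type of point. There are three cases: \textbf{(i) regular points} $(x_0,\tilde{h}(x_0))$ where $\tilde{h}$ is approximately continuous---for $\mathcal{L}^1$-a.e.\ such $x_0$ the approximate derivative exists, the boundary is locally a Lipschitz graph, the density of $\Omega_h$ equals $\tfrac12$, and the approximate normal is $(-\tilde{h}'(x_0),1)/\sqrt{1+|\tilde{h}'(x_0)|^2}$; \textbf{(ii) jump points} $x_0$ with $h^-(x_0)\neq h^+(x_0)$---for every $y_0$ strictly between $h^-(x_0)$ and $h^+(x_0)$ the density of $\Omega_h$ at $(x_0,y_0)$ equals $\tfrac12$ with horizontal approximate normal $\pm e_1$; \textbf{(iii) cut points} $x_0$ with $\tilde{h}(x_0) < h^-(x_0)\wedge h^+(x_0)$---for every $y_0 \in (\tilde{h}(x_0),\, h^-(x_0)\wedge h^+(x_0))$, one has $\tilde{h}(x) > y_0$ for all $x\neq x_0$ in a neighborhood of $x_0$, so $\Omega_h$ contains a full planar neighborhood of $(x_0,y_0)$ minus the $\mathcal{L}^2$-null vertical line $\{x_0\}\times\R$; hence the density equals $1$ and such points lie in the measure-theoretic interior of $\Omega_h$, not in $\partial^*\Omega_h$. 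Cases (i) and (ii) together recover $\widetilde{\Gamma}_h$, while case (iii) is precisely $\Gamma_h^c$ and contributes nothing to $\partial^*\Omega_h$; the $\ho$-symmetric difference is therefore null.

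The main obstacle is purely bookkeeping: fixing the pointwise representative of $h$ so that $\Omega_h$, $\widetilde{\Gamma}_h$ and $\Gamma_h^c$ are all defined unambiguously, and then recognising the slightly counter-intuitive fact that the cuts---though topologically on $\partial \Omega_h$---sit in the measure-theoretic interior of $\Omega_h$ and therefore do not appear in $\partial^*\Omega_h$ at all. This is the content that matters later, since it forces the model to track the energy on the two sides of a cut as a separate quantity (the density $\psi^c$), rather than through the reduced boundary alone. Once the representative is fixed, the three density computations above are elementary one-dimensional exercises, and the identity follows from Federer's structure theorem for subgraphs of BV functions exactly as cited.
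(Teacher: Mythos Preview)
The paper does not actually prove this lemma: it is stated as a recall, with the finite-perimeter claim attributed to \cite[Theorem~14.6]{Giusti_book} and the identification $\ho(\widetilde{\Gamma}_h\,\triangle\,\partial^*\Omega_h)=0$ to \cite[Theorem~4.5.9(3)]{Federer_book}. Your proposal is therefore strictly more detailed than the paper's own treatment, and the overall strategy---mollify for the perimeter bound, then compute two-dimensional densities to separate $\widetilde{\Gamma}_h$ from $\Gamma_h^c$---is the correct way to unpack those citations.

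One point to tighten if you want the sketch to stand on its own: your case~(i) handles only those $x_0$ where the approximate derivative of $\tilde h$ exists, which is $\mathcal{L}^1$-a.e.\ in $(a,b)$ but can miss a set of \emph{positive} $\ho$-measure on the graph when $Dh$ has a nontrivial Cantor part (e.g.\ the Cantor staircase, where the graph over the Cantor set has $\ho$-measure~$1$). At such points the density of $\Omega_h$ is still $\tfrac12$, with an essentially horizontal approximate normal, so they do belong to $\partial^*\Omega_h$; but this requires a separate (short) argument rather than the Lipschitz-graph picture of case~(i). Since you ultimately defer to Federer's structure theorem for the full statement, this is not a gap in your proposal, only in the self-contained density computation.
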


%%%%%%%%%%%%%%%%%%%%%%%%%%%%%%%%%%%%
%%%%%%%%%%%%%%%%%%%%%%%%%%%%%%%%%%%%
%%%%%%%%%%%%%%%%%%%%%%%%%%%%%%%%%%%%
%%%%%%%%%%%%%%%%%%%%%%%%%%%%%%%%%%%%

\subsection{Hausdorff convergence}

We now introduce the Hausdorff metric.  

\begin{definition}
	Let $E,F\subset\R^N$. We define
	\[
	\mathrm{d}_H(E,F)\coloneqq \inf\{ r>0 \,:\, E\subset F_r,\, F\subset E_r \},
	\]
	where, for $A\subset\R^N$ and $r>0$, we set $A_r\coloneqq \{ x+y : x\in A, y\in B_r(0) \}$.
	Moreover, we say that a sequence of sets $(E_k)_k$ with $E_k\subset\R^N$ \emph{Hausdorff converges} to a set $E\subset\R^N$, and we write $E_k\stackrel{H}{\rightarrow} E$, if $\mathrm{d}_H(E_k,E)\to0$ as $k\to\infty$.
\end{definition}

In order for the Hausdorff distance to actually be a distance, we need to work with compact sets. This will also give compactness of the metric space. This latter fact is known as Blaschke Theorem (see \cite[Theorem 6.1]{AFP}).

\begin{theorem}[Blaschke Theorem]
	The family of compact sets of $\R^N$ endowed with the Hausdorff distance is a compact metric space.
\end{theorem}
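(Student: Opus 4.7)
The plan is to verify separately that $d_h$ is a metric on the non-empty compact subsets of $\R^N$, and then to establish sequential compactness by a pigeonhole/diagonal argument. Since the family of all compact subsets of $\R^N$ is unbounded in the Hausdorff distance, I read the statement as the compactness of the family of non-empty compact subsets of a fixed compact $K_0 \subset \R^N$, which is what is used in the applications of the paper.

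For the metric axioms, symmetry and non-negativity are immediate. If $d_h(E,F) = 0$ then $E \subset F_r$ for every $r > 0$, and closedness of $F$ forces $E \subset F$; the reverse inclusion follows by symmetry. For the triangle inequality, the elementary observation that $E \subset F_r$ and $F \subset G_s$ imply $E \subset G_{r+s}$, applied with $r$ and $s$ arbitrarily close to $d_h(E,F)$ and $d_h(F,G)$, together with the symmetric statement, yields $d_h(E,G) \leq d_h(E,F) + d_h(F,G)$.

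For the compactness claim, let $(E_k)_k$ be a sequence of non-empty compact subsets of $K_0$. For each integer $m \geq 1$, partition $K_0$ into finitely many closed cells $Q_1^m, \dots, Q_{N_m}^m$ of diameter at most $1/m$, and associate to each $E_k$ the combinatorial datum
\[
\mathcal{I}_k^m \coloneqq \{\, i : Q_i^m \cap E_k \neq \emptyset \,\} \subset \{1,\dots,N_m\}.
\]
Since only finitely many such subsets exist for each fixed $m$, a standard diagonal extraction yields a subsequence (still denoted $(E_k)_k$) along which, for every $m$, the sets $\mathcal{I}_k^m$ are eventually constant, equal to some $\mathcal{I}^m$. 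As candidate limit I would take
\[
E \coloneqq \bigcap_{n \geq 1} \overline{\bigcup_{k \geq n} E_k},
\]
which is a closed subset of $K_0$, hence compact, and non-empty by the finite intersection property applied to the nested family of non-empty compact sets $\overline{\bigcup_{k \geq n} E_k}$.

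It then remains to show $d_h(E_k, E) \to 0$. Fix $\varepsilon > 0$ and choose $m$ with $2/m < \varepsilon$. For $E_k \subset E_\varepsilon$ with $k$ large, every point of $E_k$ lies in a cell $Q_i^m$ with $i \in \mathcal{I}_k^m = \mathcal{I}^m$; such a persistent cell meets $\overline{\bigcup_{j \geq n} E_j}$ for every $n$, so a further finite-intersection-property argument yields a point of $E$ in the same cell, at distance at most $1/m$. For the reverse inclusion $E \subset (E_k)_\varepsilon$, any $x \in E$ is approximated by points $y \in E_j$ with $j$ arbitrarily large; the cell containing such a $y$ has index in $\mathcal{I}^m$, and the stabilisation of $\mathcal{I}_k^m$ then forces $E_k$ to possess a point in that same cell for all large $k$, again within $2/m$ of $x$. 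I expect this second inclusion to be the main technical obstacle, since the combinatorial stabilisation must be used uniformly in $x \in E$; the tailor-made definition of $E$ as the intersection of closures of tails is what guarantees that $E$ and the tail of the sequence ultimately hit exactly the same family of persistent cells, which is the key point that makes the $\varepsilon$-control work on both sides.
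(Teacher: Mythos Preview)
The paper does not prove this theorem; it merely states it and refers to \cite[Theorem 6.1]{AFP}. There is therefore no in-paper argument to compare against.

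Your reading of the statement is correct and necessary: as literally written the claim is false, since the sequence of singletons $(\{n\})_{n\in\N}$ has no Hausdorff-convergent subsequence. The meaningful version, and the one proved in the cited reference, concerns non-empty closed subsets of a fixed compact $K_0$.

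Your argument is correct. The metric axioms are routine, and the pigeonhole/diagonal scheme based on the combinatorial data $\mathcal{I}_k^m$ is a standard and clean route to sequential compactness. The candidate $E=\bigcap_n \overline{\bigcup_{k\ge n} E_k}$ is precisely the Kuratowski upper limit of the extracted subsequence, and your two-sided $\varepsilon$-inclusion works. Your concern about uniformity in the inclusion $E\subset (E_k)_\varepsilon$ is unwarranted: the index $k_0(m)$ at which $\mathcal{I}_k^m$ stabilises depends only on $m$, not on $x\in E$; once $k\ge k_0(m)$, your construction produces, for \emph{every} $x\in E$, a point of $E_k$ within $2/m$ of $x$.

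One cosmetic point: a ``partition'' of $K_0$ into closed cells cannot be a genuine partition, as boundaries overlap. Replace it by a finite \emph{cover} by closed sets of diameter at most $1/m$; the argument is unaffected.
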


The convergence of epigraphs in the Hausdorff-complement topology we use implies their $L^1$ convergence, as it was shown in \cite[Lemma 2.5]{FonFusLeoMor07}.

\begin{lemma}\label{lem:Hausd_L1}
	Let $(h_k)_k\subset\bv(a,b)$ be a sequence of lower semi-continuous functions such that
	\[
	\sup_{k\in\N}|\emph{D}h_k|(a,b)<+\infty,\quad\quad\quad
	\R^2\setminus \Omega_{h_k} \stackrel{H}{\rightarrow} \R^2\setminus A,
	\]
	for some open set $A\subset\R^2$. Then, there exists $h\in\bv(a,b)$ such that $A=\Omega_h$, $h_k\to h$ in $L^1$. Moreover, $\Omega_{h_k}\to\Omega_h$ in $L^1$.
\end{lemma}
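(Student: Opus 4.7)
The plan is to combine one-dimensional BV compactness with the topological information carried by the Hausdorff hypothesis. Since $h_k \geq 0$ (implicit in the subgraph notation) and $\sup_k |\mathrm{D}h_k|(a,b) < \infty$, the elementary estimate $\sup h_k \leq \inf h_k + |\mathrm{D}h_k|(a,b)$ gives a uniform $L^\infty$ bound, so Helly's selection theorem produces a subsequence $h_{k_j}$ converging in $L^1$ and a.e.\ to some $\bar h \in \bv(a,b)$. Taking $h$ to be the lower semi-continuous representative of $\bar h$ (Theorem \ref{thm:properties_PBV}), Fubini yields
\[
|\Omega_{h_{k_j}} \triangle \Omega_h| = \int_a^b |h_{k_j}(x) - h(x)| \, \mathrm{d}x \to 0,
\]
so $\Omega_{h_{k_j}} \to \Omega_h$ in $L^1$.

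The core step is the identification $A = \Omega_h$. The strategy is to show that $\Omega_{h_{k_j}} \to A$ in $L^1$ as well and then invoke uniqueness of $L^1$ limits. Fix $K \coloneqq (a,b) \times (-M, M)$ with $M$ larger than the uniform $L^\infty$ bound; outside $K$ all the sets $\R^2 \setminus \Omega_{h_k}$ and $\R^2 \setminus A$ coincide. Fix $\varepsilon > 0$. By Hausdorff convergence, for large $k$ one has the sandwich
\[
\R^2 \setminus \Omega_{h_k} \subseteq (\R^2 \setminus A)_\varepsilon, \qquad \R^2 \setminus A \subseteq (\R^2 \setminus \Omega_{h_k})_\varepsilon,
\]
from which $A \triangle \Omega_{h_k} \subseteq K$ is contained in the $\varepsilon$-tubular neighborhood of $\Gamma_{h_k} \cup (\partial A \cap K)$. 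Since $\ho(\Gamma_{h_k}) \leq (b-a) + |\mathrm{D}h_k|(a,b)$ is uniformly bounded and $\ho(\partial A \cap K)$ is finite by lower semi-continuity of perimeter under Hausdorff convergence, these neighborhoods have Lebesgue measure at most $C\varepsilon$ uniformly in $k$. Sending $k \to \infty$ and then $\varepsilon \to 0$ gives $\Omega_{h_{k_j}} \to A$ in $L^1$, whence $|A \triangle \Omega_h| = 0$. A short direct argument from Hausdorff convergence shows that $A$ is a vertical subgraph (if $(x, y) \in A$ and $y' < y$, then $(x, y') \in A$, using that a ball around $(x, y)$ lies eventually in $\Omega_{h_{k_j}}$); hence $A = \{y < \alpha(x)\}$ for a lsc $\alpha$, and the equality $\alpha = h$ a.e.\ forces $\alpha = h$ pointwise, because the lsc representative of a BV function is uniquely determined in its $L^1$ class via $\min(\cdot^-, \cdot^+)$ at jumps. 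In particular $h \in \bv(a,b)$ and $A = \Omega_h$.

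Finally, a standard Urysohn-type argument upgrades subsequential convergence to convergence of the entire sequence: any subsequence of $(h_k)$ satisfies the same hypotheses with the same Hausdorff limit $A$, hence admits a further subsequence converging in $L^1$ to a function whose subgraph is $A$, which by the previous step must equal $h$; thus $h_k \to h$ in $L^1$ and, again by Fubini, $\Omega_{h_k} \to \Omega_h$ in $L^1$. The main obstacle is the identification step: Hausdorff convergence of closed sets and $L^1$ convergence of their complements are a priori incomparable, and it is precisely the uniform graph-length bound encoded in the BV seminorm that bridges them through the tubular-neighborhood estimate.
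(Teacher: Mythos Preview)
The paper does not give its own proof of this lemma; it is quoted from \cite[Lemma~2.5]{FonFusLeoMor07}, so there is no in-paper argument to compare against. I therefore assess your proposal on its own terms.

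Your overall architecture (Helly plus Fubini for a subsequence, identification of $A$, Urysohn to upgrade) is sound, and you correctly locate the content in the identification step. But that step has a genuine gap. From the Hausdorff sandwich you get $\Omega_{h_k}\setminus A\subset(\partial\Omega_{h_k})_\varepsilon$ and $A\setminus\Omega_{h_k}\subset(\partial A)_\varepsilon$. The first inclusion is usable because $\ho(\Gamma_{h_k})$ is uniformly bounded. For the second you invoke ``lower semi-continuity of perimeter under Hausdorff convergence'' to bound $\ho(\partial A\cap K)$. This is not a theorem: perimeter is lower semi-continuous under $L^1$ convergence of sets, not under Hausdorff convergence of complements, and in any case finite perimeter controls the reduced boundary $\partial^*A$, not the topological boundary $\partial A$. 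At this point you know nothing about $A$ beyond openness, so the estimate $|(\partial A)_\varepsilon\cap K|\le C\varepsilon$ is unsupported.

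The repair is to reorder and desymmetrize. First run your ``short direct argument'' to show $A$ is a vertical subgraph $\Omega_\alpha$ with $\alpha$ lower semi-continuous; this uses only the Hausdorff hypothesis and should come \emph{before} any measure estimate. Then treat the two inclusions separately. Your one-sided tubular bound already gives $|\Omega_{h_{k_j}}\setminus A|\to 0$, hence $h\le\alpha$ a.e. For the other direction note that every compact $K'\subset A$ eventually lies in $\Omega_{h_k}$ (immediate from the Hausdorff inclusion $\R^2\setminus\Omega_{h_k}\subset(\R^2\setminus A)_\varepsilon$), so $\ca_A\le\liminf_j\ca_{\Omega_{h_{k_j}}}$ pointwise, and since $\ca_{\Omega_{h_{k_j}}}\to\ca_{\Omega_h}$ in $L^1$ this yields $\alpha\le h$ a.e. Thus $\alpha=h$ a.e., so $\alpha\in\bv(a,b)$, and taking $h\coloneqq\alpha$ gives $A=\Omega_h$ on the nose. (Your side remark that the lsc representative of a BV function is uniquely determined is not literally correct---two lsc functions can agree a.e.\ without agreeing everywhere---but this is harmless once you simply \emph{define} $h$ to be $\alpha$.)
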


We now relate the Hausdorff metric with the notion of Kuratowski convergence (see \cite[Theorem 6.1]{AFP}).

\begin{proposition}\label{kuratowskyconvergence}
	Let $(E_k)_k$, with $E_k\subset\R^2$, and let $E\subset\R^2$. Then, $E_k\stackrel{H}{\rightarrow} E$ if and only if the followings hold:
	\begin{itemize}
		\item[(i)] Any cluster point of a sequence $(x_k)_k$, with $x_k\in E_k$, belongs to $E$;
		\item[(ii)] For any $x\in E$, there exists $(x_k)_k$, with $x_k\in E_k$, such that $x_k\to x$.
	\end{itemize}
	These equivalent properties are those defining the so called Kuratowski convergence.
\end{proposition}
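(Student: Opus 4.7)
The plan is a direct double-implication argument, essentially routine in nature; the main subtlety is that the reverse direction needs an underlying compactness so that bounded sequences admit cluster points. The setting of the paper (sets contained in a fixed bounded region of $\R^2$) supplies this via the Blaschke theorem cited just above.

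For the forward implication, assume $E_k\stackrel{H}{\to} E$, so $d_h(E_k,E)\to 0$. To prove (i), let $x$ be a cluster point of $(x_k)_k$ with $x_k\in E_k$, and fix a subsequence $x_{k_j}\to x$. By definition of $d_h$, for each $j$ large enough there exists $y_{k_j}\in E$ with $|x_{k_j}-y_{k_j}|<d_h(E_{k_j},E)+1/j$; since both terms go to zero, $y_{k_j}\to x$, and closedness of $E$ gives $x\in E$. For (ii), fix $x\in E$; again by definition of $d_h$, for each $k$ there exists $x_k\in E_k$ with $|x-x_k|<d_h(E_k,E)+1/k$, and this sequence converges to $x$.

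For the reverse implication, assume (i) and (ii) and argue by contradiction: suppose $d_h(E_k,E)\not\to 0$. Then along a subsequence (not relabeled) there exists $\e>0$ with $d_h(E_k,E)>\e$ for every $k$. By the definition of $d_h$, either $E_k\not\subset E_\e$ or $E\not\subset (E_k)_\e$ for infinitely many $k$; extract a further subsequence for which one of the two alternatives holds for all $k$. In the first case, pick $x_k\in E_k$ with $\dist(x_k,E)\geq\e$; by compactness (all sets lie in a common bounded region), $(x_k)_k$ has a cluster point $x$, which by (i) belongs to $E$, contradicting $\dist(x_k,E)\geq\e$ by continuity of the distance function. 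In the second case, pick $x_k\in E$ with $\dist(x_k,E_k)\geq\e$; extract a convergent subsequence $x_{k_j}\to x\in E$ (using closedness and boundedness of $E$) and, by (ii), pick $z_k\in E_k$ with $z_k\to x$. Then $\dist(x_{k_j},E_{k_j})\leq |x_{k_j}-z_{k_j}|\to 0$, again a contradiction.

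The only genuine obstacle is ensuring convergent subsequences in the reverse direction; this is where the ambient compactness is used, and it is standard in the Blaschke framework already invoked. No analytic input beyond the definition of $d_h$, continuity of the distance-to-a-set, and sequential compactness is required.
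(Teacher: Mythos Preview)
The paper does not actually prove this proposition: it is stated with a reference to \cite[Theorem 6.1]{AFP} and used as a black box. Your argument is the standard one and is correct under the implicit hypothesis that the sets are compact (which is the setting of the cited reference and of the paper, as noted just before the Blaschke theorem). You were right to flag that the reverse implication genuinely needs ambient compactness---without it the equivalence fails (e.g.\ $E_k=\{0,k\}$, $E=\{0\}$ in $\R$ satisfies (i) and (ii) but not $d_h(E_k,E)\to 0$). One small cosmetic point: in (i) of the forward direction you use closedness of $E$, and in Case~2 of the reverse direction you use compactness of $E$; both are covered once one reads the statement as being about compact sets, consistently with the cited source.
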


%%%%%%%%%%%%%%%%%%%%%%%%%%%%%%%%%%%%
%%%%%%%%%%%%%%%%%%%%%%%%%%%%%%%%%%%%
%%%%%%%%%%%%%%%%%%%%%%%%%%%%%%%%%%%%
%%%%%%%%%%%%%%%%%%%%%%%%%%%%%%%%%%%%

\subsection{On the surface energy}
\blue{Here we introduce all the notation and recall the result that are needed to treat the surface term.
\begin{definition}\label{sub}
	A function $\psi:[0,+\infty)\to\R$ is said to be \emph{sub-additive} if
	\begin{align*}
		\psi(s+t)\leq\psi(s)+\psi(t),
	\end{align*}
	for any $s,t\geq0$.
\end{definition}

\begin{definition}\label{psitilde}
	Let $\psi:[0,+\infty)\to\R$. 
	The \emph{convex sub-additive envelope} of $\psi$ is the function
	$\widetilde{\psi}:[0,+\infty)\to\R$ defined as
	\begin{align*}
		\widetilde{\psi}(s)\coloneqq\sup\{f(s):f:[0,+\infty) \to\R\ \emph{is convex, sub-additive and}\ f\leq\psi\}.
	\end{align*} 
	for all $s\in[0,+\infty)$.
\end{definition}

\begin{remark}
	Note that $\widetilde{\psi}$ is the greatest convex and sub-additive function that is no greater than $\psi$.
\end{remark} 

\begin{definition}\label{psic}
	Let $\psi:[0,+\infty)\to\R$.
	We define the function $\psi^c:[0,+\infty)\to\R$ as
	\[
	\psi^c(s)\coloneqq\min\{\widetilde{\psi}(r)+\widetilde{\psi}(t) \,:\, s=r+t\},
	\]
	for all $s\in[0,+\infty)$.
\end{definition}

\begin{remark}
	It is easy to see that the function $\psi^c$ is well defined. Indeed, fix $s\geq0$.
	Since $\psi$ is defined only for non-negative real numbers, by compactness there exist $a,b\geq 0$ with $s=a+b$ such that
	\[
	\psi^c(s) = \widetilde{\psi}(a)+\widetilde{\psi}(b).
	\]
	Moreover, note that $\psi^c(0)=2\widetilde{\psi}(0)$. This is consistent with the result obtained in \cite{FonFusLeoMor07}, where they consider the case $\psi\equiv1$. We will prove in Lemma \ref{lem:psic_sa} that $\psi^c$ is convex and sub-additive.
\end{remark}
}

We now recall two results on the surface energy. The first is a combination of \cite[Lemma A.11]{CarCriDie} and \cite[Lemma 2.2]{CarCri}.

\begin{definition}
	Let $\psi:\R\to\R$. We define its \emph{convex envelope} $\psi^{\text{cvx}}:\R\to\R$ as
	\[
	\psi^{\text{cvx}}(x)\coloneqq\sup\{\rho(x) : \rho\ \text{is convex and}\ \rho\leq\psi\},
	\]
	for all $x\in\R$.
\end{definition}

\begin{lemma}\label{lem:char_psi_s0}
	Let $\psi:[0,+\infty)\to(0,+\infty)$. Then
	\[
	\widetilde{\psi}=\widetilde{\psi^\text{cvx}}.
	\]
	Namely, in order to compute the convex sub-additive envelope of $\psi$, we can assume, without loss of generality, that $\psi$ is convex.
	
	Moreover, assume $\psi$ to be convex. Then, there exists $s_0\in(0,+\infty]$ such that
	\begin{align*}
		\widetilde{\psi}(s)=\begin{cases}
			\psi(s)\quad & s\leq s_0, \\[5pt]
			\theta s \quad& s> s_0,
		\end{cases}
	\end{align*}
\blue{	for some $\theta>0$.}
\end{lemma}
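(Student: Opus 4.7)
For the first assertion $\widetilde{\psi}=\widetilde{\psi^{\text{cvx}}}$, the plan is to prove two opposite inequalities exploiting that the convex sub-additive envelope is order-preserving. Since $\psi^{\text{cvx}}\leq \psi$, every convex and sub-additive function bounded above by $\psi^{\text{cvx}}$ is a fortiori bounded by $\psi$, which yields $\widetilde{\psi^{\text{cvx}}}\leq \widetilde{\psi}$. Conversely, $\widetilde{\psi}$ is by construction convex and satisfies $\widetilde{\psi}\leq \psi$, hence by the maximality of the convex envelope $\widetilde{\psi}\leq \psi^{\text{cvx}}$; combining this with the sub-additivity of $\widetilde{\psi}$ gives $\widetilde{\psi}\leq \widetilde{\psi^{\text{cvx}}}$.

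For the structural result under the convexity of $\psi$, the strategy is to identify $s_0$ geometrically as the point at which the tangent line to the graph of $\psi$ passes through the origin. The preliminary observation is that $s\mapsto \psi(s)/s$ is unimodal on $(0,+\infty)$: its derivative carries the sign of $s\psi'(s)-\psi(s)$, a function non-decreasing in $s$ by convexity of $\psi$, while the lower bound $\inf\psi>0$ forces $\psi(s)/s\to+\infty$ as $s\to 0^+$. Setting $\theta:=\inf_{s>0}\psi(s)/s>0$, let $s_0\in(0,+\infty]$ be the (possibly infinite) point at which this infimum is attained; when $s_0<+\infty$ one has $\psi(s_0)=\theta s_0$ and the one-sided derivative inequalities $\psi'(s_0^-)\leq \theta\leq \psi'(s_0^+)$, expressing that the line $s\mapsto \theta s$ is a tangent-through-origin to $\psi$ at $s_0$.

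Define $\bar\psi(s):=\psi(s)$ for $s\leq s_0$ and $\bar\psi(s):=\theta s$ for $s>s_0$. The remaining work is to show $\widetilde{\psi}=\bar\psi$ by verifying four properties. First, $\bar\psi$ is convex: continuity at $s_0$ is automatic, and the left-derivative estimate $\psi'(s_0^-)\leq\theta$ (right derivative of the linear piece) ensures convexity. Second, $\bar\psi\leq\psi$, trivially on $[0,s_0]$ and on $(s_0,+\infty)$ by $\psi(s)/s\geq\theta$. Third, $\bar\psi$ is sub-additive, via a case analysis on the positions of $s$, $t$, $s+t$ with respect to $s_0$; the delicate case $s+t\leq s_0$ is handled by observing that the monotonicity of $\psi/s$ on $[0,s_0]$ implies, for $s\geq t$, the chain $\psi(s+t)/(s+t)\leq \psi(s)/s\leq \psi(t)/t$ whence $\psi(s+t)\leq \psi(s)+\psi(t)$. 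Fourth, maximality: any convex sub-additive $\phi\leq \psi$ satisfies $\phi\leq \bar\psi=\psi$ on $[0,s_0]$ automatically, while on $(s_0,+\infty)$ iterated sub-additivity gives $\phi(ns_0)\leq n\phi(s_0)\leq n\theta s_0$ for every $n\in\mathbb{N}$, and the convexity of $\phi$ allows one to interpolate and conclude $\phi(t)\leq \theta t$ for every $t\geq s_0$.

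The main obstacle I anticipate is the sub-additivity verification on $[0,s_0]$, which depends crucially on the tangent-through-origin characterization of $s_0$ and the resulting monotonicity of $\psi/s$. In the degenerate case $s_0=+\infty$, the same monotonicity argument shows that $\psi$ is already sub-additive on the whole $[0,+\infty)$, so $\widetilde{\psi}=\psi$ and the linear regime in the statement becomes vacuous.
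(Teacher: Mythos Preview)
The paper does not actually give its own proof of this lemma: it is stated as a combination of \cite[Lemma~A.11]{CarCriDie} and \cite[Lemma~2.2]{CarCri}, with no argument reproduced. So there is nothing in the text to compare your approach against.

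That said, your proof is correct and is essentially the standard one. The first identity follows cleanly from the two monotonicity observations you give. For the structural part, identifying $s_0$ as the minimizer of $s\mapsto\psi(s)/s$ (equivalently the tangent-through-origin point) is exactly the right idea, and your four-step verification that $\bar\psi$ is convex, sub-additive, below $\psi$, and maximal is complete. Two small remarks: your phrasing ``its derivative carries the sign of $s\psi'(s)-\psi(s)$'' tacitly assumes differentiability, but the same monotonicity holds using one-sided derivatives, which exist everywhere for convex functions; and if the infimum of $\psi(s)/s$ is attained on a nondegenerate interval, $\psi$ is necessarily linear there, so any choice of $s_0$ in that interval yields the same $\bar\psi$.
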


\begin{remark}
	Note that, if $\psi$ is differentiable at $s_0$, then $\theta=\psi'(s_0)$. In particular, if $s_0<+\infty$, it holds that $\widetilde{\psi}$ is linear in $[s_0,+\infty)$.
\end{remark}

The following result proved in \cite[Theorem 3]{CarCriDie} gives a lower bound for the surface energy.

\begin{theorem}\label{cc}
	Let $E\subset \R^N$ be a set of finite perimeter and $\mu$ be a Radon measure supported on $\partial E$.
	Let $A\subset \R^N$ be an open set with $\mu(\partial A)=0$.
	Let $(E_k)_{k\in\N}\subset \R^N$ be a sequence of sets of finite perimeter, and let $(u_k)_{k\in\N}$, with $u_k\in L^1(\partial E_k)$ be such that
	\begin{enumerate}
		\item[$(i)$] $E_k\cap A\to E\cap A$ in $L^1(\R^N)$;
		\item[$(ii)$] $u_k\ho\llcorner (\partial^\ast E_k\cap A)\wtom \mu\llcorner A$.
	\end{enumerate}
	Then,
	\begin{align*}
		\liminf_{k\to\infty}\int_{\partial^\ast E_k\cap A}\psi(u_k)\dho
		\geq \int_{\partial^\ast E\cap A} \widetilde{\psi}(u)\dho
		+ \theta\mu^s(A),
	\end{align*}
	where $\widetilde{\psi}$ is as in Definition \ref{psitilde}.
\end{theorem}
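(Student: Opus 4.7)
The plan is to first reduce to the case $\psi = \widetilde\psi$: since $\widetilde\psi \leq \psi$ it suffices to prove the inequality with $\widetilde\psi$ in place of $\psi$ on the left-hand side, and by Lemma \ref{lem:char_psi_s0} we may therefore assume $\psi$ itself is convex and sub-additive. A key structural fact to be used is that, for such $\psi$, iterating sub-additivity gives $\psi(ns)\leq n\psi(s)$; dividing by $ns$ and letting $n\to\infty$ yields $\psi(s)\geq \theta s$ for every $s\geq 0$. Combining this with the convexity estimate $\psi(\alpha s)\leq \psi(s)+\theta(\alpha-1)s$ (valid for $\alpha\geq 1$ since $\psi'\leq \theta$) gives $\psi(\alpha s)\leq \alpha\psi(s)$ for $\alpha\geq 1$; differentiating at $\alpha=1$ yields $s\psi'(s)\leq \psi(s)$, so the tangent line of $\psi$ at every $s>0$ has a non-negative $y$-intercept. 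Setting $C\coloneqq \{(a,b)\in\R^2\,:\, a r+b\leq \psi(r)\ \forall\, r\geq 0\}$, one therefore has the representation
\[
\psi(s)=\sup\{a s+b\,:\,(a,b)\in C,\, b\geq 0\},
\]
and moreover $\sup\{a\,:\,(a,b)\in C,\, b\geq 0\}=\theta$.

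Setting $\nu_k\coloneqq \psi(u_k)\hno\llcorner\partial^\ast E_k$, $\mu_k\coloneqq u_k \hno\llcorner \partial^\ast E_k$, and $\lambda_k\coloneqq \hno\llcorner\partial^\ast E_k$, the bound $\inf\psi>0$ gives $\sup_k\lambda_k(A)<+\infty$, so up to a subsequence $\nu_k\wtom \nu$ and $\lambda_k\wtom\lambda^\ast$ weakly-$\ast$ as Radon measures on $A$, while $\mu_k\wtom \mu$ by $(ii)$. The $L^1$-convergence in $(i)$, combined with the standard lower semicontinuity of $\varphi\mapsto \int \varphi\,\mathrm{d}|D\chi_{\cdot}|$ against non-negative $\varphi\in C_c(A)$, yields $\lambda^\ast\geq \hno\llcorner\partial^\ast E$ as measures on $A$. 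For each $(a,b)\in C$ with $b\geq 0$, the pointwise inequality $\psi(u_k)\geq au_k+b$ upgrades to $\nu_k\geq a\mu_k+b\lambda_k$; testing against $\varphi\in C_c(A)$, $\varphi\geq 0$, and passing to the limit using the three convergences above gives the measure inequality
\[
\nu\geq a\mu+b\,\hno\llcorner(\partial^\ast E\cap A)\qquad\text{on }A.
\]

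To conclude, decompose $\mu=u\,\hno\llcorner\partial^\ast E+\mu^s$ and $\nu=\nu^{ac}\hno\llcorner\partial^\ast E+\nu^s$. The measure inequality above splits into the pointwise density bound $\nu^{ac}(x)\geq au(x)+b$ for $\hno$-a.e.\ $x\in\partial^\ast E\cap A$, and the singular-part bound $\nu^s\geq a\mu^s$. Taking suprema over admissible $(a,b)$ with $b\geq 0$, the first becomes $\nu^{ac}\geq \psi(u)=\widetilde\psi(u)$ by the representation from the first paragraph, and the second becomes $\nu^s\geq \theta\mu^s$ since $\sup a=\theta$ (approximated by tangents at large $s$). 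Adding these and invoking the Portmanteau inequality $\liminf_k\nu_k(A)\geq \nu(A)$ for weak-$\ast$ convergence on the open set $A$ yields the stated bound.

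The main technical obstacle is that the restriction $b\geq 0$ is \emph{necessary} in the passage to the limit (since $\lambda^\ast\geq \hno\llcorner\partial^\ast E$ is only an inequality, the term $b\lambda_k$ can only be handled for $b\geq 0$) but must also be shown to be \emph{harmless}, i.e., the sup over pairs with $b\geq 0$ still equals $\widetilde\psi(u)$ and the sup of the corresponding $a$'s still equals $\theta$. This is precisely where sub-additivity enters essentially, via the derived inequalities $\psi(s)\geq \theta s$ and $s\psi'(s)\leq \psi(s)$ from the first paragraph.
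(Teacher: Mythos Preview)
The paper does not prove Theorem~\ref{cc}; it is quoted from \cite[Theorem~3]{CarCriDie} and used as a black box in the liminf argument. So there is no in-paper proof to compare against, and your task was really to supply a proof of the cited result.

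Your argument is correct and is in fact the standard route for such lower semicontinuity statements. The essential point you isolate is the right one: once one reduces to $\psi=\widetilde\psi$, sub-additivity (via $s\psi'(s)\le\psi(s)$) forces every supporting affine minorant $r\mapsto ar+b$ of $\psi$ to have $b\ge0$, and this is exactly what is needed to pass to the limit, since the perimeter lower semicontinuity only yields the one-sided inequality $\lambda^\ast\ge\hno\llcorner\partial^\ast E$. The recovery of $\widetilde\psi(u)$ from the countable family of affine minorants, and of $\theta$ as the supremum of the admissible slopes, is handled cleanly.

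Two minor points worth tightening. First, you should state explicitly that you pass to a subsequence realising the liminf of $\int_{\partial^\ast E_k\cap A}\psi(u_k)\dho$ and that along this subsequence the energies are bounded (otherwise there is nothing to prove); only then does $\inf\psi>0$ give $\sup_k\lambda_k(A)<\infty$. Second, the hypothesis $\mu(\partial A)=0$ is not actually used in your argument, because assumption~$(ii)$ already hands you the weak-$\ast$ convergence localised to $A$; this hypothesis is presumably a relic of how the result is applied in the paper (to restrict a global convergence to $A$), not of the proof itself.
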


%%%%%%%%%%%%%%%%%%%%%%%%%%%%%%%%%%%%
%%%%%%%%%%%%%%%%%%%%%%%%%%%%%%%%%%%%
%%%%%%%%%%%%%%%%%%%%%%%%%%%%%%%%%%%%
%%%%%%%%%%%%%%%%%%%%%%%%%%%%%%%%%%%%

\section{Setting}

In this section we give the rigorous definitions of the objects discussed in the introduction.
We start with the set of admissible configurations.

\begin{definition}\label{def:reg_admiss}
	Let $\Omega\subset\R^2$, $v\in W^{1,2}(\Omega;\R^2)$, and $\mu$ be a Radon measure in $\R^2$.
	We say that the triple $(\Omega,v,\mu)$ is an \emph{admissible regular configurations} if there exists a Lipschitz function $h:(a,b)\to[0,+\infty)$ such that
	\[
	\Omega = \Omega_h,\quad\quad\quad \mu = u\ho\llcorner\Gamma_h,
	\]
	for some $u\in L^1(\Gamma_h)$. We denote by $\mathcal{A}_r$ the family of all admissible regular configurations.
\end{definition}
\blue{\begin{definition}\label{ArmM}
 Fix $m,M>0$. We denote by $\mathcal{A}_r(m,M)$ the triples $(\o,v,\mu)\in \mathcal{A}_r$ such that 
\[
\int_{\Gamma_h} u(\textbf{x})  \dho(\textbf{x})=m,\quad\quad\quad
\mathcal{L}^2\left(\Omega\cap\{ y\geq 0 \}\right) = M.
\]
\end{definition}}

We now define the energy for regular configurations.

\begin{definition}\label{regfunctional}
	Next, we introduce the energy for regular configurations. We define $\f:\mathcal{A}_r\to\R$ as
	\begin{align*}
		\f(\Omega,v,\mu)\coloneqq \int_\Omega W\big( E(v(\textbf{x}))-E_0(y) \big)\ \text{d}\textbf{x}
		+ \int_\Gamma \psi\big(u(\textbf{x})\big) \dho(\textbf{x}),
	\end{align*}
	for every $(\o,v,\mu)\in\mathcal{A}_r$.
\end{definition}

We now introduce the more general configurations that will be treated.

\begin{definition}\label{def:admiss}
	Let $\Omega\subset\R^2$, $v\in W^{1,2}(\Omega;\R^2)$, and $\mu$ be a Radon measure in $\R^2$.
	We say that the triple $(\Omega,v,\mu)$ is an \emph{admissible configurations} if there exists a function $h\in \bv(a,b)$ with $h\geq 0$ such that
	\[
	\Omega = \Omega_h,\quad\quad\quad \mu = u\ho\llcorner\Gamma_h +\mu^s\llcorner\Gamma_h,
	\]
	where $\mu^s$ is the singular part of $\mu$ with respect to $\ho\llcorner\Gamma_h$.
	We denote by $\mathcal{A}$ the family of all admissible configurations.
\end{definition}
\blue{\begin{definition}\label{AmM}
	Fix $m,M>0$. We denote by $\mathcal{A}(m,M)$ the triples $(\o,v,\mu)\in \mathcal{A}$ such that 
	\[
\mu(\Gamma_h)=m,\quad\quad\quad
	\mathcal{L}^2\left(\Omega\cap\{ y\geq 0 \}\right) = M.
	\]
\end{definition}}

In order to define the relaxed energy, we need to introduce some notation.

\begin{remark}
	In Theorem \ref{thm:properties_PBV} we introduced the functions $h^\pm$. Note that
	\[
	h^-(x)=\liminf_{y\to x} h(y),\quad\quad\quad h^+(x)=\limsup_{y\to x} h(y).
	\]
	In particular, if $x\in(a,b)$ is a point of continuity for $h$, then $h^-(x)=h^+(x)=h(x)$.
\end{remark}

\begin{definition}\label{cuts}
	Let $h\in \bv(a,b)$. We call
	\[
	\Gamma_h \coloneqq \left\{(x,y)\in\R^2 \,:\, x\in(a,b),\, h(x)\leq y \leq h^+(x) \,\right\}
	\]
	the \emph{extended graph} of $h$.
	Moreover, we define: \vspace{0.2cm}
	\begin{itemize}
		\item The \emph{jump part} of $\Gamma_h$ as
		\[
		\Gamma^j_h\coloneqq\{(x,y)\in\R^2 \,:\, x\in(a,b),\, h^-(x)\leq y<h^+(x)\};
		\]
		\item The \emph{cut part} of $\Gamma_h$ as
		\[
		\Gamma^c_h\coloneqq\{(x,y)\in\R^2 \,:\, x\in(a,b),\, h(x)\leq y<h^-(x)\};
		\]
		\item The \emph{regular part} of $\Gamma_h$ as
		\[
		\Gamma^r_h\coloneqq \Gamma_h\setminus (\Gamma_h^j\cup\Gamma_h^c).
		\]
	\end{itemize}
	Moreover, we introduce the notation $\widetilde{\Gamma}_h\coloneqq \Gamma^r_h\cup\Gamma^j_h$.
\end{definition}

\begin{remark}
	Note that
	\[
	\Gamma_h=\widetilde{\Gamma}_h\cup \Gamma_h^c
	=\Gamma_h^r\cup\Gamma_h^j\cup\Gamma_h^c,
	\]
	holds for every $h\in \bv(a,b)$.
	Moreover, when there is no room for confusion, we will drop the suffix $h$ in the notation above. 
\end{remark}

We now define the notion of convergence that we are going to use to study our functionals.

\begin{definition}\label{convergence}
	We say that a sequence $(\Omega_k,v_k,\mu_k)_k\subset\mathcal{A}$ converges to a configuration $(\Omega,v,\mu)\in\mathcal{A}$ if the following three conditions are satisfied:
	\begin{enumerate}
		\item[$(i)$] $\R^2\setminus\Omega_k\stackrel{H}{\rightarrow} \R^2\setminus\Omega$ in the Hausdorff convergence of sets;
		\item[$(ii)$] $v_k\rightharpoonup v$ weakly in $W^{1,2}_{\mathrm{loc}}(\Omega;\R^2)$,
		\item[$(iii)$] $\mu_k\wtom\mu$ weakly$^\ast$ in the sense of measures;
	\end{enumerate}
	as $k\to\infty$.
	We will write $(\Omega_k,v_k,\mu_k)\to (\o,v,\mu)$ to denote the above convergence. 
\end{definition}

\begin{remark}
	Note that, if $K\subset\Omega$ is a compact set, then there exists $k_0\in\N$ such that $K\subset\Omega_k$ for all $k\geq k_0$. Therefore, the convergence of the functions $v_k$'s is well defined.
\end{remark}

Now we are going to define the setting for our relaxed functional.

\begin{definition}\label{theta}
	Let $\psi:[0,+\infty)\to\R$. We define the \emph{recession coefficients} of $\widetilde{\psi}$ and $\psi^c$ as
	\begin{align*}
		\widetilde{\theta}\coloneqq\lim_{s\to+\infty}\frac{\widetilde{\psi}(s)}{s}\quad\text{and}\quad
		\theta^c\coloneqq\lim_{s\to+\infty}\frac{\psi^c(s)}{s},
	\end{align*}
\blue{respectively, where $\widetilde{\psi}$ is as in Definition \ref{psitilde} and $\psi^c$ as in Definition \ref{psic}.}
\end{definition}

In Lemma \ref{lem:thetac_thetawild} we will prove that $\widetilde{\theta} = \theta^c$. The common value will be denoted by $\theta$.
We are now in position to introduce the candidate for the relaxed energy.

\begin{definition}\label{relaxedfunctional}
	Let $\g:\mathcal{A}\to[0,+\infty)$ be the functional defined as
	\begin{align*}
		\mathcal{G}(\o,v,\mu)\coloneqq \int_\Omega
		W\big( E(v(\textbf{x}))-E_0(y) \big)\ \text{d}\textbf{x}
		+ \int_{\widetilde{\Gamma}}\widetilde{\psi}\big(u(\textbf{x})\big)\dho(\textbf{x})
		+\int_{\Gamma^c}\psi^c\big(u(\textbf{x})\big)\dho(\textbf{x})
		+\theta\mu^s(\Gamma),
	\end{align*}
	where $\theta$ is the common recession coefficient of $\widetilde{\psi}$ and $\psi^c$.
\end{definition}

\section{Technical results}

In this section we collect the main technical results that will be needed in the proof of the integral representation of the relaxation.

\begin{lemma}\label{lem:psic_sa}
	Let $\psi:[0,+\infty)\to\R$. Then, the function $\psi^c$ (see Definition \ref{psic}) is convex and sub-additive.
\end{lemma}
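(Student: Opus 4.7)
The plan is to verify each property directly from the definition of $\psi^c$ by exploiting the corresponding property of $\widetilde{\psi}$, which is convex and sub-additive by construction (Definition \ref{psitilde}). The only nontrivial ingredient is that the minimum in the definition of $\psi^c$ is attained, which is noted in the remark following Definition \ref{psic} (compactness of the admissible split $\{(r,t)\geq 0 : r+t=s\}$, together with lower semicontinuity of $\widetilde{\psi}$ which follows from convexity).

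For sub-additivity, I would fix $s_1,s_2\geq 0$ and pick optimal decompositions $s_i = r_i + t_i$ realizing the minimum in the definition of $\psi^c(s_i)$. Since $(r_1+r_2) + (t_1+t_2) = s_1+s_2$ is an admissible split for $\psi^c(s_1+s_2)$, using sub-additivity of $\widetilde{\psi}$ twice gives
\[
\psi^c(s_1+s_2) \leq \widetilde{\psi}(r_1+r_2) + \widetilde{\psi}(t_1+t_2) \leq \widetilde{\psi}(r_1) + \widetilde{\psi}(r_2) + \widetilde{\psi}(t_1) + \widetilde{\psi}(t_2) = \psi^c(s_1)+\psi^c(s_2).
\]

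For convexity, I would fix $s_1,s_2\geq 0$ and $\lambda\in[0,1]$, and again take optimal decompositions $s_i=r_i+t_i$. Since
\[
\lambda s_1 + (1-\lambda) s_2 = \bigl(\lambda r_1 + (1-\lambda) r_2\bigr) + \bigl(\lambda t_1 + (1-\lambda) t_2\bigr)
\]
is an admissible split of $\lambda s_1 + (1-\lambda) s_2$, applying the definition of $\psi^c$ on the left and convexity of $\widetilde{\psi}$ on the right yields
\[
\psi^c(\lambda s_1 + (1-\lambda) s_2) \leq \widetilde{\psi}\bigl(\lambda r_1 + (1-\lambda) r_2\bigr) + \widetilde{\psi}\bigl(\lambda t_1 + (1-\lambda) t_2\bigr) \leq \lambda \psi^c(s_1) + (1-\lambda)\psi^c(s_2).
\]

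There is no real obstacle here, the argument is a one-line observation for each property, and I do not expect any subtlety beyond pointing out that the minimum in the definition of $\psi^c$ is attained so that we may speak of optimal decompositions.
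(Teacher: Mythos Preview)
Your proposal is correct and matches the paper's proof almost exactly; the convexity argument is identical. For sub-additivity the paper takes a marginally different route: it first observes $\widetilde{\psi}(z)\leq\psi^c(z)$ (from sub-additivity of $\widetilde{\psi}$ applied to an optimal split of $z$) and then uses the trivial split $z+w=(z)+(w)$ to get $\psi^c(z+w)\leq\widetilde{\psi}(z)+\widetilde{\psi}(w)\leq\psi^c(z)+\psi^c(w)$, whereas you combine two optimal splits directly---both are one-line arguments, and the paper's intermediate inequality $\widetilde{\psi}\leq\psi^c$ is reused in the next lemma.
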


\begin{proof}
	\emph{Step 1.} We prove that $\psi^c$ is sub-additive. Fix $z\geq 0$. Then, by definition of $\psi^c(z)$, there exist $x,y\geq0$ with $z=x+y$ such that
	\[
	\psi^c(z)=\widetilde{\psi}(x)+\widetilde{\psi}(y).
	\]
	Thus,	
	\[
	\psi^c(z)=\widetilde{\psi}(x)+\widetilde{\psi}(y)\geq \widetilde{\psi}(x+y)=\widetilde{\psi}(z),
	\]
	where last inequality follows from the sub-additivity of $\widetilde{\psi}$.
	Moreover,
	\begin{align*}
		\psi^c(z+w)\leq\widetilde{\psi}(z)+\widetilde{\psi}(w)\leq\psi^c(z)+\psi^c(w),
	\end{align*}
	for every $z,w\geq0$. \\
	
	\emph{Step 2.} We prove that $\psi^c$ is convex. Let $z,w\geq0$ and $\lambda\in[0,1]$. By definition of $\psi^c(z)$, and of $\psi^c(w)$, there exist $z_1,z_2,w_1,w_2\geq0$ with $z=z_1+z_2$ and $w=w_1+w_2$ such that
	\[
	\psi^c(z)=\widetilde{\psi}(z_1)+\widetilde{\psi}(z_2),\quad\quad\quad
	\psi^c(w)=\widetilde{\psi}(w_1)+\widetilde{\psi}(w_2).
	\]
	Note that
	\begin{align*}
		\lambda z+(1-\lambda)w&=\lambda(z_1+z_2)+(1-\lambda)(w_1+w_2)\\[5pt]
		&=\big(\lambda z_1+(1-\lambda)w_1\big)+\big(\lambda z_2+(1-\lambda )w_2\big).
	\end{align*}
	Thus, we get that
	\begin{align*}
		\psi^c\big(\lambda z+(1-\lambda)z\big)&\leq\widetilde{\psi}\big(\lambda z_1+(1-\lambda)w_1\big)+\widetilde{\psi}\big(\lambda z_2+(1-\lambda )w_2\big)\\[5pt]
		&\leq\lambda\widetilde{\psi}(z_1)+(1-\lambda)\widetilde{\psi}(w_1)+\lambda\widetilde{\psi}(z_2)+(1-\lambda)\widetilde{\psi}(w_2)\\[5pt]
		&=\lambda\psi^c(z)+(1-\lambda)\widetilde{\psi}(w),
	\end{align*}
	where, in the second step, we used the convexity of $\widetilde{\psi}$.
\end{proof}

We now prove that the recession coefficients of $\widetilde{\psi}$ and of $\psi^c$, defined in Definition \ref{theta}, coincide.

\begin{lemma}\label{lem:thetac_thetawild}
	Let $\psi:[0,+\infty)\to\R$. Then, $\widetilde{\theta} = \theta^c$.
\end{lemma}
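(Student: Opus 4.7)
The plan is to sandwich $\psi^c$ between $\widetilde{\psi}$ and $\widetilde{\psi}+\widetilde{\psi}(0)$, and then divide by $s$ and pass to the limit. Both required limits exist because $\widetilde{\psi}$ is convex, so the difference quotients $s\mapsto(\widetilde{\psi}(s)-\widetilde{\psi}(0))/s$ are nondecreasing, hence admit a limit in $[0,+\infty]$ as $s\to+\infty$; the same conclusion then transfers to $\psi^c/s$ a posteriori from the two-sided bound.

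For the lower bound on $\psi^c$, I would use the sub-additivity of $\widetilde{\psi}$ (which is guaranteed by Definition \ref{psitilde}). For any decomposition $s=r+t$ with $r,t\geq 0$ one has $\widetilde{\psi}(s)\leq \widetilde{\psi}(r)+\widetilde{\psi}(t)$, and taking the minimum over such decompositions on the right yields $\widetilde{\psi}(s)\leq \psi^c(s)$.

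For the upper bound, I would simply test the minimum defining $\psi^c(s)$ with the trivial decomposition $r=s$, $t=0$, obtaining $\psi^c(s)\leq \widetilde{\psi}(s)+\widetilde{\psi}(0)$. Combining the two inequalities,
\begin{equation*}
\widetilde{\psi}(s)\leq \psi^c(s)\leq \widetilde{\psi}(s)+\widetilde{\psi}(0),
\end{equation*}
for every $s\geq 0$. Dividing by $s>0$ and letting $s\to+\infty$, the constant term $\widetilde{\psi}(0)/s$ vanishes, so $\psi^c(s)/s$ and $\widetilde{\psi}(s)/s$ share the same limit, i.e.\ $\theta^c=\widetilde{\theta}$.

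There is no real obstacle here; the only point that requires a line of justification is the existence of the two limits as genuine limits (and not merely $\liminf$ or $\limsup$), which follows from the convexity of $\widetilde{\psi}$ established in Lemma \ref{lem:psic_sa}'s proof ingredients (or directly from the construction of $\widetilde{\psi}$ as a supremum of convex functions). Once that is noted, the squeeze above is immediate.
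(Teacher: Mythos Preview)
Your argument is correct and follows the same overall structure as the paper's proof: the lower bound $\widetilde{\psi}(s)\leq\psi^c(s)$ via sub-additivity of $\widetilde{\psi}$ is identical, and for the upper bound you test the minimum with the decomposition $(s,0)$ to get $\psi^c(s)\leq\widetilde{\psi}(s)+\widetilde{\psi}(0)$, whereas the paper tests with $(s/2,s/2)$ to get $\psi^c(s)\leq 2\widetilde{\psi}(s/2)$. Your choice is slightly cleaner, since the squeeze $\widetilde{\psi}(s)\leq\psi^c(s)\leq\widetilde{\psi}(s)+\widetilde{\psi}(0)$ makes the common limit immediate without the extra change-of-variable step $\lim_{s\to\infty}\tfrac{2}{s}\widetilde{\psi}(s/2)=\widetilde{\theta}$ that the paper needs; but the underlying idea is the same.
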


\begin{proof}
	We first prove that $\theta^c\leq \widetilde{\theta}$.
	Indeed, since $\psi^c(s)\leq 2\widetilde{\psi}(s/2)$, for all $s\geq0$, we have that
	\begin{align*}
		\theta^c=\lim_{s\to+\infty}\frac{\psi^c(s)}{s} \leq\lim_{s\to+\infty} \frac{2}{s}\widetilde{\psi}\Big(\frac{s}{2}\Big)=\widetilde{\theta}.
	\end{align*}
	We now prove that $\theta^c\geq \widetilde{\theta}$.
	Fix $z\geq 0$, and let $x,y\geq0$ with $z=x+y$ such that 
	\[
	\psi^c(z) = \widetilde{\psi}(x)+\widetilde{\psi}(y).
	\]
	Then, we get
	\begin{align*}
		\psi^c(z)=\widetilde{\psi}(x)+\widetilde{\psi}(y)\geq \widetilde{\psi}(z),
	\end{align*}
	where last inequality follows from the sub-additivity of $\widetilde{\psi}$.
	Therefore,
	\begin{align*}
		\theta^c=\lim_{s\to+\infty}\frac{\psi^c(s)}{s}\geq\lim_{s\to+\infty}\frac{\widetilde{\psi}(s)}{s}=\widetilde{\theta}.
	\end{align*}
	This concludes the proof.
\end{proof}

An important result that will be used several times is the following. 

\begin{lemma}\label{lem:finitely_many_cuts}
	Let $h\in\bv(a,b)$ be lower semi-continuous, and let $\varepsilon>0$. Define
	\[
	P(\varepsilon)\coloneqq \left\{\, x\in(a,b) \,:\, \exists\, y\in \Gamma_h
	\text{ s.t. } h(x)\leq y \leq h^-(x)-\varepsilon \,\right\}.
	\]
	Then, $P(\varepsilon)$ is a finite set.
\end{lemma}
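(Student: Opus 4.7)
The plan is to argue by contradiction: suppose $P(\varepsilon)$ contains $N$ distinct points, and deduce a bound of the form $N\varepsilon<\mathrm{Var}(h)$ from the pointwise total variation of $h$. First I would unpack the definition of $P(\varepsilon)$: choosing $y=h(x)$ in the existence clause, the condition that there is some $y\in\Gamma_h$ with $h(x)\leq y\leq h^-(x)-\varepsilon$ is equivalent to $h^-(x)-h(x)\geq\varepsilon$, i.e.\ $x$ is a cut of depth at least $\varepsilon$. Since $h$ is lower semi-continuous and belongs to $\bv(a,b)$, the remark preceding Definition \ref{cuts} gives $h^-(x)=\liminf_{y\to x}h(y)$, so the inequality $h^-(x)\geq h(x)+\varepsilon$ means that on some small punctured neighborhood $(x-\delta_x,x+\delta_x)\setminus\{x\}$ the function $h$ exceeds $h(x)+\varepsilon/2$.

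Next, suppose $P(\varepsilon)$ contains $N$ distinct points $a<x_1<\cdots<x_N<b$. Shrinking the $\delta_{x_i}$ if necessary, I can arrange that the intervals $(x_i-\delta_{x_i},x_i+\delta_{x_i})$ are pairwise disjoint. Picking any $y_i^-\in(x_i-\delta_{x_i},x_i)$ and $y_i^+\in(x_i,x_i+\delta_{x_i})$ therefore yields $h(y_i^\pm)>h(x_i)+\varepsilon/2$, with the interleaved ordering $y_1^-<x_1<y_1^+<y_2^-<\cdots<y_N^+$. The pointwise variation of $h$ evaluated on this partition satisfies
\[
\mathrm{Var}(h)\;\geq\;\sum_{i=1}^{N}\bigl(h(y_i^-)-h(x_i)\bigr)+\bigl(h(y_i^+)-h(x_i)\bigr)\;>\;N\varepsilon,
\]
forcing $N$ to be bounded.

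The step requiring the most care is the finiteness of the pointwise total variation of the chosen LSC representative: the distributional BV norm is insensitive to modifications on a Lebesgue-null set and so does not \emph{see} cuts on its own, while the argument above really uses the \emph{pointwise} variation. What is actually needed is that the sum of cut depths $\ho(\Gamma_h^c)=\sum_{x\text{ cut}}(h^-(x)-h(x))$ be finite, which in the setting of the paper follows from $\ho(\Gamma_h)<\infty$ (inherited in the admissible class from the uniform length bound along Lipschitz approximants together with Lemma \ref{lem:subgraph_finite_per}). With this in hand the pointwise variation is controlled by $|Dh|(a,b)+\ho(\Gamma_h^c)<\infty$, and the estimate above forces $P(\varepsilon)$ to be finite.
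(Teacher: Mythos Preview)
Your reduction of $P(\varepsilon)$ to $\{x:h^-(x)-h(x)\geq\varepsilon\}$ matches the paper, and both arguments ultimately rest on the summability $\sum_x \bigl(h^-(x)-h(x)\bigr)<\infty$, from which finiteness of $P(\varepsilon)$ is immediate. The paper reaches this summability in one line by citing a decomposition of $|Dh|(a,b)$ that it writes with jump term $\sum_{x\in S}[h^+(x)-h(x)]$ and then bounding $h^-(x)-h(x)\leq h^+(x)-h(x)$; your route through the pointwise variation $\mathrm{Var}(h)$ of the lsc representative and an interleaved partition is a legitimate alternative but a detour, since once the sum of cut depths is known to be finite you are done without building that partition.

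Your final paragraph is the most valuable part of the proposal, and it is in fact sharper than the paper on this point. The distributional $|Dh|$ is blind to modifications on null sets, so the standard decomposition actually carries $\sum_x[h^+(x)-h^-(x)]$ rather than $\sum_x[h^+(x)-h(x)]$; cut depths $h^-(x)-h(x)$ do \emph{not} contribute to $|Dh|$ (a single-point modification of a constant function already shows this, and one can place countably many such cuts of depth $1$ on an isolated sequence while keeping $h$ lsc and $|Dh|=0$). Hence the summability of cut depths genuinely requires the extra input you isolate, namely $\mathcal{H}^1(\Gamma_h^c)<\infty$, which in the paper's setting is available because admissible profiles arise as Hausdorff limits of Lipschitz graphs with uniformly bounded length. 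With that hypothesis in hand, $\sum_x\bigl(h^-(x)-h(x)\bigr)=\mathcal{H}^1(\Gamma_h^c)<\infty$ and the lemma follows at once; your pointwise-variation estimate is then not needed, though it is correct.
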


\begin{proof}
	By \cite[Corollary 3.33]{AFP}, it holds that
	\[
	|\text{D}h|(a,b) = \|h'\|_{L^1(a,b)} + \sum_{x\in S} [h^+(x) - h(x)] + |\text{D}^c|(a,b),
	\]
	where $S$ denotes the set of points $x\in (a,b)$ such that $h^+(x) > h(x)$, and $\text{D}^c h$ is the Cantor part of the measure $\text{D} h$.
	we recall that from Theorem \ref{thm:properties_PBV} we have that $J_h$ is at most countable. Therefore, we obtain that
	\[
	\sum_{x\in S} [h^-(x) - h(x)]  < +\infty.
	\]
	Notice that the set $P(\varepsilon)$ corresponds to points in $S$ where the quantity  $h^-(x) - h(x)$ is at least $\varepsilon$. From the convergence of the series above, we get the desired result.
\end{proof}

We now prove a result that will be needed in the limsup inequality.

\begin{lemma}\label{lemmanonempty}
	Let $r>0$, and let $\{z_j\}_{j\in\N}$ be an enumeration of $\Z^2$. Define
	\[
	Q^j\coloneqq r \left( z_j + (0,1)^2 \right).
	\]
	Let $h\in \bv(a,b)$, and let $(h_k)_k$ be a sequence of Lipschitz functions such that $\R^2\setminus \Omega_{h_k}\stackrel{H}{\rightarrow} \R^2\setminus \Omega_h$, as $k\to\infty$.
	Then, there exists $v\in\R^2$, and $\overline{k}\in\N$ such that the grid defined as 
	\begin{align*}
		\widetilde{Q}^j\coloneqq v+Q^j
	\end{align*}
	satisfies:
	\begin{itemize}
		\item[(a)] The intersection between the graph of $h$ and the boundary of the new grid is finite, namely
		\begin{align*}
			\hz\big(\Gamma\cap (\bigcup_{j\in\N} \partial \widetilde{Q}^j)\big)<+\infty.
		\end{align*}
		\item[(b)] We have that
		\begin{align*}
			\ho(\Gamma_k\cap \widetilde{Q}^j)\neq0
			\quad\quad\text{ if and only if }\quad\quad
			\ho(\Gamma\cap \widetilde{Q}^j)\neq0,
		\end{align*}
		for every $k\geq\bar{k}$.
	\end{itemize}
\end{lemma}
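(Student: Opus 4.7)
\emph{Proof plan.} The strategy is to choose $v=(v_1,v_2)\in\R^2$ \emph{generically}, excluding an $\mathcal{L}^2$-null union of obstructions, and then read off (a) and (b) using Hausdorff arguments and the graph structure of the $h_k$'s. The conditions I would impose on $v$ are: \textbf{(C1)} $v_1+r\Z$ is disjoint from the (at most countable) set of jump- and cut-abscissae of $h$; \textbf{(C2)} for every $n\in\Z$, $\hz(\Gamma\cap\{y=v_2+rn\})<\infty$, valid for a.e.\ $v_2$ by the coarea inequality applied to the $1$-Lipschitz projection $(x,y)\mapsto y$ (recall $\ho(\Gamma)<\infty$ since $h\in\bv(a,b)$); \textbf{(C3)} no grid vertex $(v_1+rm,v_2+rn)$ belongs to $\Gamma$, a countable constraint once $v_1$ is fixed; \textbf{(C4)} $v_2+r\Z$ avoids the plateau values of $h$, the local-extremum values of $h$, and the $y$-coordinates of the endpoints of the jump/cut segments, all at most countable sets. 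Since each obstruction is $\mathcal{L}^2$-null, a valid $v$ exists.

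To establish (a), the Hausdorff-complement convergence combined with Lemma \ref{lem:Hausd_L1} uniformly bounds the heights of $h$ and of the $h_k$'s, so only finitely many grid lines meet the relevant compact region containing $\Gamma$ and eventually all $\Gamma_k$. By (C1), each vertical grid line meets $\Gamma$ in the single point $\bigl(v_1+rm,h(v_1+rm)\bigr)$; by (C2), each relevant horizontal grid line meets $\Gamma$ in a finite set. Summing over the finite collection of relevant lines yields $\hz\bigl(\Gamma\cap\bigcup_j\partial\widetilde Q_j\bigr)<\infty$.

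For (b), I would use the Kuratowski characterization (Proposition \ref{kuratowskyconvergence}), which through the graph structure upgrades the Hausdorff-complement convergence to the Hausdorff convergence $\Gamma_k\stackrel{H}{\rightarrow}\Gamma$ inside the relevant compact region. If $\ho(\Gamma\cap\widetilde Q_j)\neq 0$, part (a) forces $\ho(\Gamma\cap\partial\widetilde Q_j)=0$, so an interior point $p\in\Gamma\cap\widetilde Q_j^{\circ}$ exists; Kuratowski (ii) provides $p_k\in\Gamma_k$ with $p_k\to p$, eventually in $\widetilde Q_j^{\circ}$, and continuity of $h_k$ produces a nontrivial arc of $\Gamma_k$ inside $\widetilde Q_j^{\circ}$. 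Conversely, if $\ho(\Gamma_k\cap\widetilde Q_j)\neq 0$ for infinitely many $k$, pick interior points $p_k\in\Gamma_k\cap\widetilde Q_j^{\circ}$ and extract a cluster point $p\in\Gamma\cap\overline{\widetilde Q_j}$ via Kuratowski (i); conditions (C1), (C3), (C4) exclude $p\in\partial\widetilde Q_j$, because at any non-corner edge point hit by $\Gamma$ the continuity of $h$ (away from jumps/cuts) and the exclusion of tangency heights force $\Gamma$ to cross transversally into $\widetilde Q_j^{\circ}$, which already yields $\ho(\Gamma\cap\widetilde Q_j)>0$.

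The main obstacle is exactly this converse direction: if a local extremum of $h$ were allowed to land on a horizontal grid line, $\Gamma$ would touch $\partial\widetilde Q_j$ tangentially while the approximating $\Gamma_k$ still pierces $\widetilde Q_j^{\circ}$ (the toy example being $h(x)=1-x^2$, $h_k(x)=1+\tfrac{1}{k}-x^2$ with $\widetilde Q_j=[-\varepsilon,\varepsilon]\times[1,1+\varepsilon]$). Condition (C4) eliminates precisely this scenario, after which the argument reduces to transversal crossings handled by routine Hausdorff and continuity considerations, and $\overline{k}$ is obtained as the maximum of the finitely many stabilization indices produced by the cases above.
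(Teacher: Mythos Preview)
Your approach is essentially the paper's: choose $v$ generically so that vertical grid lines avoid the jump and cut abscissae of $h$ and horizontal grid lines meet $\Gamma$ in finitely many points (the paper obtains the latter from the coarea formula applied to $h$, you from the coarea inequality for the projection $(x,y)\mapsto y$; both work), then run Kuratowski arguments for (b). The forward direction of (b) is handled identically in both.

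Where you differ is your condition (C4). The paper disposes of the reverse implication in (b) in a single line (``by Kuratowski convergence and the fact that $\widetilde Q_j$ is open''), but your toy example $h(x)=1-x^2$, $h_k(x)=1+\tfrac{1}{k}-x^2$ with $\widetilde Q_j=(-\varepsilon,\varepsilon)\times(1,1+\varepsilon)$ shows this is not sufficient on its own: tangential contact of $\Gamma$ with $\partial\widetilde Q_j$ must be excluded, and (C4) does exactly that. So on this point your argument is more careful than the paper's. One refinement is needed, however: the set of \emph{all} local-extremum values of a BV function need not be countable, so your claim that the (C4) obstruction is countable is not immediate as phrased. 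What you actually need, and what \emph{is} countable, is the set of \emph{strict} local-extremum values. This suffices because once (C2) is in force, any local extremum of $h$ whose value lies on a horizontal grid line has a finite level set there and is therefore automatically strict. With that adjustment the obstruction set for (C4) is genuinely countable and the argument closes.
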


\begin{proof} We first prove $(a)$. We first consider an horizontal translation. Since $h\in \bv (a,b)$, it has at most a countable number of jumps and cuts. Therefore, there is $v_1\in\R$ such that
	\begin{align*}
		\hz\big((\Gamma^j\cup \Gamma^c)\cap\big[\bigcup_{j\in\N} \partial \big((v_1,0)+Q^j\big)\big]\big)<+\infty.
	\end{align*}
	Now we need to find a suitable vertical translation. Using the coarea formula (see \cite[Theorem 3.40]{AFP}), we infer that
	\begin{align*}
		\mathrm{Per}\big(\{ x\in (a,b):h(x)>t \}\big) < +\infty,
	\end{align*}
	for almost every $t\in\R$, where $\mathrm{Per}$ denotes the perimeter.
	Since we are using the lower semi-continuous representative of $h$, the sup-level set $\{ x\in (a,b):h(x)>t \}$ is open for all $t\in\R$, which yields that
	\[
	\partial\{ x\in (a,b):h(x)>t \} = \{ x\in (a,b):h(x)=t \}.
	\]
	Thus, we obtain that
	\begin{align*}
		\hz\big(\{ x\in (a,b):h(x)=t \} \big)<+\infty,
	\end{align*}
	for almost every $t\in\R$. Let $D\subset\R$ defined as
	\begin{align*}
		D\coloneqq\big\{t>0:	\hz\big(\{ x\in (a,b):h(x)=t \} \big)=+\infty\big\}.
	\end{align*}
	By definition, we have that $|D|=0$. Let $r>0$, and, for every $t>0$, set
	\begin{align*}
		G(t)\coloneqq\{rj+t:j\in\Z\}.
	\end{align*}
	We now claim that
	\begin{align*}
		|\{t\in(0,r):G(t)\cap D\neq\emptyset\}|=0.
	\end{align*}
	First, note that if $s,t\in(0,r)$, with $s\neq t$, we have $G(t)\cap G(s)=\emptyset$. Now, define
	\begin{align*}
		D_j&\coloneqq D\cap [rj,(r+1)j],\\[5pt]
		\widetilde{D}_j&\coloneqq D_j -rj.
	\end{align*}
	By definition $\widetilde{D}_j\subset (0,r)$ and $|D_j|=|\widetilde{D}_j|=0$, for every $j\in\Z$. In conclusion, we notice that
	\begin{align*}
		\{t\in(0,r):G(t)\cap D\neq\emptyset\}=\bigcup_{j\in\Z} \widetilde{D}_j.
	\end{align*}
	The claim follows from the above equality. \\
	By proving the claim, we infer the existence of $v_2\in\R$ such that
	\begin{align*}
		\hz\big(\Gamma\cap\big[\bigcup_{j\in\N} \partial \big((0,v_2)+Q^j\big)\big]\big)<+\infty.
	\end{align*}
	In conclusion the translation $v\coloneqq (v_1,v_2)$ is the one we were looking for.\\

	We now prove part (b). Let $v\in\R^2$ be the vector found above, and let $\widetilde{Q}^j$ be the translated squares. If the graph of $h$ is contained in a single square $\widetilde{Q}^j$, then there is nothing to prove. Thus, we assume that this is not the case.
	
	Fix $j\in\N$ such that
	\[
	\ho(\Gamma\cap \widetilde{Q}^j)\neq0.
	\]
	We will prove that there exists $\bar{k}(j)\in\N$ such that
	\[
	\ho(\Gamma_k\cap \widetilde{Q}^j)\neq0.
	\]
	for all $k\geq\bar{k}(j)$. Let $x\in \Gamma\cap \widetilde{Q}^j$. By the Kuratowski convergence, there exists $(x_k)_k$ with $x_k\in \Gamma_k$ for all $k\in\N$ such that $x_k\to x$ as $k\to\infty$. Since $\widetilde{Q}^j$ is open, there exists $\bar{k}(j)\in\N$ (depending also on $x$, but this is not a problem) such that $x_k\in \Gamma_k\cap \widetilde{Q}^j$ for all $k\geq \bar{k}(j)$.
	Using the fact that the graph of $h$ is not entirely contained in the open square $Q^j$, and that the extended graph of $h_k$ is a connected curve, we obtain that
	\[
	\ho(\Gamma_k\cap \widetilde{Q}^j)\neq0
	\]
	as desired. Since $h\in\bv(a,b)$, it is bounded, and hence contained in a finite number of squares. In the following, we will also need to consider $\bar{k}_1\in\N$, the maximum of the $\bar{k}(j)$'s. \\
	
	We now prove the opposite implication. Let $j\in\N$ be such that
	\[
	\ho(\Gamma\cap \widetilde{Q}^j)=0.
	\]
	Then, by Kuratowski convergence and the fact that $\widetilde{Q}^j$ is open, we infer that there exists $\widetilde{k}(j)\in\N$ such that for all $k\geq \widetilde{k}(j)$ it holds
	\[
	\ho(\Gamma_k\cap \widetilde{Q}^j)=0.
	\]
	Again, let $\widetilde{k}_2\in\N$ be the maximum of the $\widetilde{k}(j)$'s.
	
	Setting $\bar{k}\coloneqq \max\{\bar{k}_1, \widetilde{k}_2\}$, we get the desired result.
\end{proof}

Finally, we prove a result about the so called wriggling process. This was introduced in \cite[Lemma 4]{CarCriDie} to exploit the quantitative loss of lower semi-continuity of the perimeter in order to recover the relaxed energy density from $\psi$.
The difference with this latter is that, in our case, only vertical perturbations are allowed. Moreover, we impose the oscillating profiles to stay below the given function.

\begin{proposition}\label{step3limsup}
	Let $h:[\alpha,\beta]\to\R$ be a non-negative Lipschitz function and  let  $r\geq 1$. Then, there exists a sequence of non-negative Lipschitz functions $(h_k)_k$ such that:
	\begin{enumerate}
		\item[$(i)$] $\ho(\Gamma_k)= r\ho(\Gamma)$;\vspace{0.1cm}
		\item[$(ii)$] $h(\alpha)=h_k(\alpha)$, and $ h(\beta)=h_k(\beta)$, for every $k$;\vspace{0.1cm}
		\item[$(iii)$] $h\leq h_k$, for every $k$;\vspace{0.1cm}
		\item[$(iv)$] $h_k\to h$ uniformly as $k\to\infty$;\vspace{0.1cm}
		\item[$(v)$] $\ho\llcorner\Gamma_k\wtom r\ho\llcorner\Gamma$, as $k\to\infty$,
	\end{enumerate}
	where we used the notation $\Gamma_k:=\Gamma_{h_k}$, and $\Gamma:=\Gamma_h$.
\end{proposition}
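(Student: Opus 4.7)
I would build $h_k$ by adding to $h$ a triangular tent of a carefully tuned amplitude on each element of a uniform partition of $[\alpha,\beta]$. Fix $k\in\N$, set $\delta_k\coloneqq(\beta-\alpha)/k$, $x_j\coloneqq\alpha+j\delta_k$, $m_j\coloneqq(x_{j-1}+x_j)/2$, and $I_j\coloneqq[x_{j-1},x_j]$. For $A\geq 0$ define
\[
F_j(A)\coloneqq \int_{x_{j-1}}^{m_j}\sqrt{1+(h'(x)+A)^2}\,\dd x+\int_{m_j}^{x_j}\sqrt{1+(h'(x)-A)^2}\,\dd x.
\]
Then $F_j$ is continuous with $F_j(0)=L_j\coloneqq\ho(\Gamma\cap(I_j\times\R))$ and $F_j(A)\to+\infty$ as $A\to+\infty$, so for the fixed $r\geq 1$ the intermediate value theorem provides some $A_j=A_j(k)\geq 0$ with $F_j(A_j)=rL_j$. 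Having selected any such $A_j$, I set
\[
h_k(x)\coloneqq h(x)+A_j\min\{x-x_{j-1},\,x_j-x\}\qquad\text{for }x\in I_j.
\]

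Properties $(i)$--$(iv)$ are then routine. The tent is non-negative and vanishes at every $x_j$, so $h_k$ is Lipschitz, satisfies $h_k\geq h\geq 0$, and agrees with $h$ at $\alpha,\beta$; this handles $(ii)$ and $(iii)$. Summing $F_j(A_j)=rL_j$ over $j$ yields $(i)$. For $(iv)$ I would bound $A_j$ uniformly in $k,j$: if $L$ is the Lipschitz constant of $h$, then for $A>L$ the pointwise inequality $\sqrt{1+(h'\pm A)^2}\geq|A\mp h'|\geq A-L$ gives $F_j(A)\geq(A-L)\delta_k$, while $rL_j\leq r\delta_k\sqrt{1+L^2}$; hence any such root satisfies $A_j\leq L+r\sqrt{1+L^2}$ uniformly in $k,j$. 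The tent amplitude $A_j\delta_k/2$ is therefore $O(\delta_k)$, so $h_k\to h$ uniformly.

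For $(v)$ I take $\varphi\in C_c(\R^2)$ and write
\[
\int_{\Gamma_k}\varphi\,\dho=\sum_{j=1}^k\int_{I_j}\varphi(x,h_k(x))\sqrt{1+h_k'(x)^2}\,\dd x.
\]
Using uniform continuity of $\varphi$, the uniform convergence $h_k\to h$, and the uniform total-mass bound $\sum_j F_j(A_j)=r\ho(\Gamma)$, I may replace $\varphi(x,h_k(x))$ on $I_j$ by the constant $\varphi(x_{j-1},h(x_{j-1}))$ at cost $o(1)$ as $k\to\infty$. The defining identity $F_j(A_j)=rL_j$ then turns the remaining sum into $r\sum_j\varphi(x_{j-1},h(x_{j-1}))L_j$, and a standard Riemann-sum estimate identifies its limit with $r\int_{\Gamma}\varphi\,\dho$, giving $(v)$. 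The main technical point is the well-posedness of $A_j$ together with its uniform-in-$(k,j)$ bound, both of which rest on the coarse linear lower bound on $F_j(A)$ derived above; the rest is bookkeeping.
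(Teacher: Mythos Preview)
Your proof is correct and takes a genuinely different, more elementary route than the paper. The paper perturbs $h$ by an oscillation of the form $(2/k-(1/k)|\sin(t_kx)|)\eta_k(x)$, where $\eta_k$ is a linear cutoff near the endpoints and the frequency $t_k$ is tuned via the intermediate value theorem; it first does this on a single interval (Step~1) and then glues such pieces over a subdivision (Step~2). You instead subdivide from the outset and place a single triangular tent on each cell, tuning the slope $A_j$ by the intermediate value theorem applied to the explicit length functional $F_j$. Both arguments hinge on the same two ingredients---an IVT selection to hit the prescribed length locally, and a small-mesh/Riemann-sum argument for the weak$^*$ convergence---but your tent construction avoids the cutoff $\eta_k$, the analysis of the set $Z_t=\{\cos(tx)\geq 1/2\}$, and the somewhat delicate lower bound used in the paper to show $f_k(t)\to\infty$. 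Your coarse linear estimate $F_j(A)\geq(A-L)\delta_k$ also yields the uniform bound on the added slopes in one line, which the paper obtains only a posteriori in Remark~\ref{remarkwriggle}; in particular your $h_k$ are uniformly Lipschitz with constant at most $2L+r\sqrt{1+L^2}$, a fact the paper needs later in Proposition~\ref{proposition3}. The trade-off is that the paper's sinusoidal construction produces smoother perturbations and makes the ``wriggling'' mechanism more visibly analogous to \cite[Lemma~4]{CarCriDie}, whereas your tents are only piecewise $C^1$---but for the present statement nothing beyond Lipschitz regularity is required.
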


\begin{proof}
	\emph{Step 1}. Fix $\alpha\leq p\leq q\leq \beta$. We prove the existence of a sequence $(\xi_k)_k$ of Lipschitz functions $\xi_k:[p,q]\to[0,+\infty)$, that satisfies
	\begin{enumerate}
		\item[$(i')$] $\ho(\Gamma_{\xi_k})= r\ho(\Gamma)$;\vspace{0.1cm}
		\item[$(ii')$] $h(p)=\xi_k(p)$, and $ h(q)=\xi_k(q)$, for every $k$;\vspace{0.1cm}
		\item[$(iii')$] $h\leq \xi_k$, for every $k$;\vspace{0.1cm}
		\item[$(iv')$] $\xi_k\to h$ uniformly on $[p,q]$, as $k\to\infty$,
	\end{enumerate}
	
	Notice that if $r=1$ it is enough to consider the constant sequence $\xi_k=h$, for each $k$. Thus, fix $r>1$.
	Let $(\lambda_k)_k\subset(0,1)$ be an infinitesimal sequence such that $0<\lambda_k<q-p$ for each $k\in\N$, and $k\lambda_k\to\infty$ as $k\to\infty$.
	For each $k\in\N$, define the function $\eta_k\in\mathcal{C}\big([p,q]\big)$ as
	\begin{align*}
		\eta_k(x)\coloneqq
		\begin{cases}
			\displaystyle\frac{x-p}{\lambda_k}\quad &x\in[p,p+\lambda_k),\\[10pt]
			\displaystyle1\quad &x\in[p+\lambda_k,q-\lambda_k],\\[5pt]
			\displaystyle-\frac{x-q}{\lambda_k}\quad &x\in (q-\lambda_k,q].
		\end{cases}
	\end{align*} 
	For each $k\in\N$, let $t_k\geq 0$ that will be chosen later, and define the non-negative Lipschitz function $\xi_k:[p,q]\to [0,+\infty)$ as
	\begin{align}\label{wriggle}
		\xi_k(x)\coloneqq h(x)+\Big(\frac{2}{k}-\frac{1}{k}|\sin(t_k x)|\Big)\eta_k(x).
	\end{align}
	First of all, note that $\xi_k\to h$ uniformly as $k\to\infty$. Indeed, this follows from the fact that $\o_k\to\o$ as $k\to\infty$ in the Hausdorff sense.
	Moreover, from \eqref{wriggle}, we get that
	\[
	0\leq h\leq \xi_k, \quad\quad\quad h(p)=\xi_k(p),
	\quad\quad\quad h(q)=\xi_k(q).
	\]
	We claim that it is possible to chose $t_k>0$ such that $\ho(\Gamma_{\xi_k})=r\ho(\Gamma)$, for every $k\in\N$.
	In order to show that, for each $k\in\N$, let $f_k:[0,+\infty)\to(0,+\infty)$ be defined as
	\begin{align*}
		f_k(t)\coloneqq \int_p^q \sqrt{1+\partial_x H_k(x,t)^2}\,\text{d}x,
	\end{align*}
	where
	\begin{align}
		H_k(x,t)\coloneqq h(x)+\Big(\frac{2}{k}-\frac{1}{k}|\sin(tx)|\Big)\eta_k(x).
	\end{align}
	We claim that:
	\begin{enumerate}
		\item[$(a)$]  $\lim_{t\to+\infty}f_k(t)=+\infty$, for every $k\in\N$;\vspace{0.1cm}
		\item[$(b)$] $\lim_{k\to\infty} f_k(0)=\ho(\Gamma)$.
	\end{enumerate}
	Therefore, since $f_k$ is continuous for every $k\in\N$, and $r>1$, it is possible to chose $t_k>0$ such that $f_k(t_k)=\ho(\Gamma_{\xi_k})=r\ho(\Gamma)$, for every $k\in\N$.
	We now prove claim $(a)$ and $(b)$ in two separate sub-steps.\\
	
	\emph{Step 1.1}. We now prove claim $(a)$. First, notice that
	\begin{align*}
		f_k(t)=\int_p^q \sqrt{1+\partial_x H_k(x,t)^2}\,\text{d}x \geq \int_{p+\lambda_k}^{q-\lambda_k} \sqrt{1+\partial_x H_k(x,t)^2}\,\text{d}x.
	\end{align*}
	Now, fix $k\in \N$ and consider the set
	\begin{align*}
		Z_t\coloneqq \{x\in(p+\lambda_k,q-\lambda_k): \cos(t x)\geq 1/2\}.
	\end{align*}
	We now prove that
	\begin{equation}\label{eq:Zt}
		\inf_{t>0}|Z_t|>0.
	\end{equation}
	In order to do so, we first show that $|Z_n|>0$, for $n\in \N$.
	Set $I\coloneqq(p+\lambda_k,q-\lambda_k)$ and consider the function $g:I\to\{0,1\}$ defined as
	\begin{align*}
		g(x)\coloneqq \mathds{1}_{\{\cos(y)\geq 1/2\}}(x),
	\end{align*}
	and extend it periodically on $\R$. Notice that, for $n\in\N$,
	\begin{align*}
		g(nx)=\mathds{1}_{\{\cos(ny)\geq 1/2\}}(x).
	\end{align*}
	By applying the Riemann-Lebesgue Lemma, we get that
	\begin{align}\label{estimateonzn}
		|Z_n|&=|\{\cos(nx)\geq 1/2\}\cap I|
		=\int_I g(nx)\,\text{d}x\to \fint_Ig(x)\,\text{d}x>0,
	\end{align}
	as $n\to\infty$. Now, we use the above result to show \eqref{eq:Zt}.
	Let $t\in(n,n+1)$. We have that
	\[
	|Z_t|=|\{\cos(tx)\geq 1/2\}\cap I |
	\]
	and that
	\[
	\int_I g(tx)\,\text{d}x=\frac{1}{t}\int_{tI}g(z)\,\text{d}z
	\]
	As
	\begin{align}\label{gassum}
		g(z)=\sum_{m\in\Z}\mathds{1}_{\{-\frac{\pi}{3}+2m\pi\leq y\leq \frac{\pi}{3}+2m\pi\}},
	\end{align}
	we can define the following families of intervals. Set
	\begin{align*}
		\mathcal{A}_t\coloneqq\{J\subset \R: J\cap tI \neq\emptyset\}
		\quad\quad\quad
		\mathcal{B}_t\coloneqq\{J\subset \R: J\subset tI\}.
	\end{align*}
	Then, by \eqref{gassum}, we have
	\begin{align}\label{aztb}
		\frac{2\pi}{3t}\hz(\mathcal{B}_t)\leq |Z_t|\leq \frac{2\pi}{3t}\hz(\mathcal{A}_t).
	\end{align}
	Since $t\in(n,n+1)$ and by \eqref{estimateonzn} and \eqref{aztb}, we get that
	\begin{align*}
		|Z_t|&\geq \frac{2\pi}{3(n+1)} \hz(\mathcal{B}_n) = \frac{2\pi}{3(n+1)} (\hz(\mathcal{A}_n)-2) \\
		&\geq \int_I g(nx)\,\text{d}x -\frac{4\pi}{3(n+1)} 
		>C-\frac{4\pi}{3(n+1)},
	\end{align*}
	where $C>0$ is a constant independent of $n$. We conclude our claim by letting $n\to\infty$.\\

	Note that for every $t>0$, on $Z_t$ we have $\eta_k(x)=1$ and $\cos(tx)>1/2$.
	Thus, we get that
	\begin{align}\label{hogammak}
		f_k(t)
		\nonumber&\geq \int_{Z_t}  \sqrt{1+h'(x)^2+\frac{t}{k}\cos(t x)\Big[\frac{t}{k}\cos(t x)-2 \ell\Big]}\,\text{d}x \\
		&\geq \int_{Z_t}  \sqrt{1+h'(x)^2+\frac{t}{k}\cos(tx)\Big[\frac{t}{2k}-2 \ell\Big]}\,\text{d}x \nonumber \\
		&\geq \int_{Z_t}  \sqrt{1+\frac{t}{k}\cos(tx)\Big[\frac{t}{2k}-2 \ell\Big]}\,\text{d}x,
	\end{align}
	where $\ell$ is the Lipschitz constant of $h$. By choosing $t>0$ such that
	\begin{align*}
		t>4k \ell,
	\end{align*}
	from \eqref{hogammak}, and from $\cos(tx)>1/2$ on $Z_t$, we obtain
	\begin{align}\label{lowerbound}
		f_k(t)\geq \int_{Z_t}  \sqrt{1+\frac{t}{2k}\Big[\frac{t}{2k}-2 \ell\Big]}\,\text{d}x.
	\end{align}
	Thus, from \eqref{eq:Zt} and \eqref{lowerbound}, we conclude that
	\begin{align*}
		\lim_{t\to+\infty}f_k(t)=+\infty.
	\end{align*}
	
	\emph{Step 1.2} Now we prove claim $(b)$. Notice that
	\begin{align}\label{continuityfk}
		f_k(0)&=
		\int_{p}^{p+\lambda_k}\sqrt{1+\Big(h'(x)+\frac{2}{k\lambda_k}\Big)^2}\,\text{d}x
		+\int_{p+\lambda_k}^{q-\lambda_k}\sqrt{1+h'(x)^2}\,\text{d}x \nonumber \\
		&\hspace{2cm}+\int_{q-\lambda_k}^q \sqrt{1+\Big(h'(x)+\frac{2}{k\lambda_k}\Big)^2}\,\text{d}x.
	\end{align}
	Since the sequence $(\lambda_k)_k$ is such that $k\lambda_k\to\infty$, and
	$\|h'\|_{L^\infty}<+\infty$  since $h$ is Lipschitz, it holds that
	\[
	\sup_{k\in\N}\sup_{x\in[p,q]} \left|h'(x)+\frac{2}{k\lambda_k}\right| < +\infty.
	\]
	Thus, letting $k\to\infty$ in \eqref{continuityfk}, we obtain
	\begin{align*}
		\lim_{k\to\infty}f_k(0)=\ho(\Gamma).
	\end{align*}
	This concludes the proof of $(b)$.\\
	
	\emph{Step 2}. We now prove the statement of the Lemma. Fix $r>1$, otherwise the statement is trivial. For $k\in\N$, divide the interval $[\alpha,\beta]$ into $k$ subintervals $\big([\alpha^k_i,\alpha_{i+1}^k]\big)_{i=1}^k$, where $\alpha^1_k=\alpha$ and $\alpha^k_{k+1}=\beta$.
	Assume that $|\alpha_{i+1}^k - \alpha_i^k|< 2/k$.
	Thanks to Step 1, for each $k\in\N$, and each $i\in\{1,\dots,k\}$, there exists a function $\xi^k_i:[\alpha^k_i,\alpha^k_{i+1}]\to[0,+\infty)$ such that
	\[
	\xi^k_1(\alpha) = h(\alpha),\quad\quad\quad
	\xi^k_i(\alpha^k_{i+1}) = \xi^k_{i+1}(\alpha^k_{i+1}),\quad\quad\quad
	\xi^k_{k+1}(\beta) = h(\beta),
	\]
	for all $i\in\{2,\dots, k\}$, with
	\[
	\| \xi^k_i - h \|_{\mathcal{C}^0(\R)} \leq \frac{1}{k},
	\]
	and such that
	\[
	\ho(\mathrm{graph}(\xi^k_i)) = r \ho(\Gamma \llcorner [\alpha^k_i,\alpha^k_{i+1}]\times\R),
	\]
	for all $i\in\{1,\dots,k\}$, and all $k\in\N$. Define $h_k:[\alpha,\beta]\to[0,+\infty)$ as
	\begin{align*}
		h_k(x) \coloneqq \xi^k_i(x),
	\end{align*}
	for $x\in[\alpha^k_i,\alpha^k_{i+1}]$.
	Note that $h_k$ is Lipschitz, $h\leq h_k$ for all $k\in\N$, $h_k\to h$ uniformly in $k$, and
	\begin{align*}
		\ho(\Gamma_{k})=\sum_{i=1}^k\ho\big(\mathrm{graph}(\xi^k_i)\big)
		=r\sum_{i=1}^k\ho(\Gamma\llcorner [\alpha^i,\alpha^{i+1}]\times\R)
		=r\ho(\Gamma).
	\end{align*}
	It remains to prove property $(v)$. To do so, fix $\varphi\in C_c(\R^2)$ and $\varepsilon>0$.
	Thanks to the uniform continuity of $\varphi$, there exists $\bar{k\in\N}$ such that for $k\geq \bar{k}$ the following holds: if $x_i\in[\alpha^k_i,\alpha^k_{i+1}]$, then
	\begin{equation}\label{eq:unif_cont_phi_h}
		|\varphi\big(x,h_k(x)\big)-\varphi\big(x_i,h_k(x_i)\big)|\leq\varepsilon.
	\end{equation}
	Moreover, from the fact that $h_k$ is converging uniformly to the continuous function $h$, up to increasing the value of $\bar{k}$, we can also assume that
	\begin{equation}\label{eq:unif_cont_phi_h_1}
		|\varphi\big(x_i,h_k(x_i)\big) - \varphi\big(x_i,h(x_i)\big)|\leq\varepsilon.
	\end{equation}
	Using \eqref{eq:unif_cont_phi_h}, we get
	\begin{align}\label{vfirststep}
		\nonumber
		\int_{	\Gamma_k}\varphi(\textbf{x})\dho
		&- r\int_{\Gamma}\varphi(\textbf{x})\dho \nonumber\\
		&=\sum_{i=1}^k \int_{\alpha^i}^{\alpha^{i+1}}
		\Big[\varphi\big(x,h_k(x)\big)\sqrt{1+h_k'(x)^2}
		- r\varphi\big(x,h(x)\big)\sqrt{1+h'(x)^2}\Big]\text{d}x \nonumber\\
		&\leq\sum_{i=1}^k \Big[\varepsilon
		\int_{\alpha^i}^{\alpha^{i+1}}\Big(\sqrt{1+h_k'(x)^2}
		+ r\sqrt{1+h'(x)^2}\Big)\text{d}x \nonumber \\
		&\hspace{1.4cm}+\int_{\alpha^i}^{\alpha^{i+1}}
		\Big(\varphi\big(x_i,h_k(x_i)\big)\sqrt{1+h_k'(x)^2}
		- r\varphi\big(x_i,h(x_i)\big)\sqrt{1+h'(x)^2}\Big)\text{d}x\Big ] \nonumber \\
		&\leq \varepsilon \sum_{i=1}^k\int_{\alpha^i}^{\alpha^{i+1}}		
		\Big(\sqrt{1+h_k'(x)^2}+r\sqrt{1+h'(x)^2}\Big)\text{d}x\nonumber \\
		&\hspace{1.4cm}+\varphi\big(x_i,h(x_i)\big)\sum_{i=1}^k
		\Big[\int_{\alpha^i}^{\alpha^{i+1}}\big(\sqrt{1+h_k'(x)^2}
		- r\sqrt{1+h'(x)^2}\Big)\text{d}x\Big]  \nonumber \\
		&= \varepsilon \sum_{i=1}^k \int_{\alpha^i}^{\alpha^{i+1}}		
		\Big(\sqrt{1+h_k'(x)^2}+r\sqrt{1+h'(x)^2}\Big)\text{d}x,
	\end{align}
	where in the previous to last step we used \eqref{eq:unif_cont_phi_h_1}, while last step follows from $\ho(\Gamma_k)=r\ho(\Gamma)$.
	Thus, from \eqref{vfirststep} we obtain
	\begin{align*}
		\int_{	\Gamma_k}\varphi(\textbf{x})\dho-r\int_{	\Gamma}\varphi(\textbf{x})\dho\leq 2r\ho(\Gamma)\varepsilon.
	\end{align*}
	Thus, since $\varepsilon$ is arbitrary, we get that $\ho\llcorner\Gamma_k\wtom r\ho\llcorner\Gamma$ as $k\to\infty$.
\end{proof}

\begin{remark}\label{remarkwriggle}
	From the above proof, we can infer the following facts,
	\begin{enumerate}
		\item[$(i)$] Following \eqref{lowerbound},
		\begin{align*}
			r\ho(\Gamma)\geq\int_{Z_{t_k}}  \sqrt{1+h'(x)^2+\frac{t_k}{2k}\Big[\frac{t_k}{2k}-2 \ell\Big]}\text{d}x\geq \mu \sqrt{\frac{t_k}{2k}\Big[\frac{t_k}{2k}-2 \ell\Big]},
		\end{align*}
		where $\mu\coloneqq \inf_{t\geq0} |Z_t|$. This leads us to
		\begin{align*}
			\Big(\frac{t_k}{2k}\Big)^2-2 \ell\Big(\frac{t_k}{2k}\Big)\leq\frac{1}{\mu^2}r^2\ho(\Gamma)^2.
		\end{align*}
		If we solve for $t/2k$ we get
		\begin{align}\label{boundtkk}
			\frac{t_k}{k}\leq C,
		\end{align}
		where 
		\begin{align*}
			C\coloneqq 2\Big(\ell+\sqrt{\ell^2+\frac{r^2\ho(\Gamma)^2}{\mu^2}}\Big).
		\end{align*}
		\item[$(ii)$]  We claim that $t_k\to+\infty$ as $k\to\infty$. Assume by contradiction this is not the case, namely that
		\begin{align*}%\label{tktau}
			\sup_k t_k\leq \tau,
		\end{align*}
		for some $\tau>0$. 
		Thus, we have that
		\begin{align*}
			h'_{k}(x)&=h'(x)-\frac{t_k}{k}\cos(t_kx)\frac{|\sin(t_kx)|}{\sin(t_k x)}\eta_k(x)+\Big(\frac{2}{k}-\frac{1}{k}|\sin(t_kx)|\Big)\eta_k'(x)\\[5pt]
			&\leq h'(x)+\frac{\tau}{k}+\frac{2\eta_k'(x)}{k},
		\end{align*}
		for every $k$. From the inequality
		\begin{align*}
			|h_k'(x)-h'(x)|\leq\frac{\tau}{k}+\frac{2\eta_k'(x)}{k}
		\end{align*}
		we infer that
		\begin{align}\label{convergencelength}
			\ho(\Gamma_k)\to\ho(\Gamma).
		\end{align}
		From step 1 we know that
		\begin{align}\label{contradictionwriggling}
			\ho(\Gamma_k)=r\ho(\Gamma)>\ho(\Gamma),
		\end{align}
		with $r>1$ and for every $k$.
		By putting together \eqref{convergencelength} and \eqref{contradictionwriggling} we get a contradiction.
		\item[$(iii)$] From the expression of $h'_k$, we can actually choose the sequence $(\lambda_k)_k$ such that the sequence $(h_k)_k$ is uniformly Lipschitz. Indeed, on $[\alpha,\alpha+\lambda_k]$ we have
		\begin{align*}
			|h_k'(x)|\leq \ell+\frac{t_k}{k}+
			\frac{2}{k\lambda_k}.
		\end{align*}
		As $t_k/k$ is bounded and $(\lambda_k)_k$ is chosen such in such a way that $k\lambda_k\to+\infty$ as $k\to\infty$, we can conclude.
	\end{enumerate}
\end{remark}
\section{Liminf inequality}

We now present the main ideas of the proof of the  liminf inequality, contained in the following theorem. One of the issues that we take in account is the fact that our final configuration $\Gamma$, is the graph of a $\bv$ function which might have a dense cut set. In particular, this is a problem since in our argument we deal with what is happening on the left and on right of every cut in $\Gamma$. This is not doable in case the cut set is dense. One possible way to go around, is to split the energy on $\Gamma^c$. By fixing $\varepsilon>0$, since $h$ is a $\bv$ function, the cuts in $\Gamma^c$ whose length is larger then $\varepsilon$ is necessarily finite. For those amount of cuts we do the liminf inequality by using the result contained in \cite{CarCri}. Finally, for the cut part in $\Gamma^c$ with lenght smaller that $\varepsilon$, we prove that the energy there is as small as we want as $\varepsilon\to0$.

\begin{theorem}\label{liminfinequality}
	For every configuration $(\Omega,v,\mu)\in\mathcal{A}$ and for every sequence of regular configurations $(\Omega_k,v_k,\mu_k)_k\subset\mathcal{A}_r$ such that $(\Omega_k,v_k,\mu_k)\to(\Omega,v,\mu)$ as $k\to\infty$, we have
	\begin{align*}
		\g(\Omega,v,\mu)\leq \liminf_{k\to\infty}\f (\Omega_k,v_k,\mu_k).
	\end{align*}
\end{theorem}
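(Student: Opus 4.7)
The plan is to treat the bulk and surface terms separately, since they do not interact in the relaxation. For the bulk, the map $v \mapsto \int_\Omega W(E(v)-E_0(y))\,\mathrm{d}\textbf{x}$ is convex in $\nabla v$ and continuous on $W^{1,2}$, hence sequentially weakly lower semicontinuous. Exhausting $\Omega$ by compact sets $K$, each of which is eventually contained in every $\Omega_k$ (by the Hausdorff-complement convergence $\mathbb{R}^2 \setminus \Omega_k \stackrel{H}{\to} \mathbb{R}^2 \setminus \Omega$), and using the weak $W^{1,2}_{\mathrm{loc}}$ convergence $v_k \rightharpoonup v$, one obtains the bulk liminf inequality in standard fashion. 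The rest of the proof is devoted to the surface term.

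As a first reduction, I would assume that the limit profile $h$ has only finitely many cuts. By Lemma \ref{lem:finitely_many_cuts}, for every $\varepsilon>0$ the set $P(\varepsilon)$ of cuts of height at least $\varepsilon$ is finite, and the total $\mathcal{H}^1$-content of the remaining cuts tends to zero as $\varepsilon\to 0$; combined with the growth bound $\widetilde{\psi}(s),\psi^c(s) \le 2\theta s + C$, the $\widetilde{\psi}$- and $\psi^c$-energies on the short cuts vanish in the limit, reducing the problem to cuts $P_1,\dots,P_N$. Around each $P_i$ pick an open rectangle $C_\varepsilon^i$ of width $\varepsilon$ centred at the $x$-coordinate of $P_i$ and of height just below the cut height; by a slicing argument these may be chosen so that $\mu(\partial C_\varepsilon^i)=0$ and the vertical mid-line of $C_\varepsilon^i$ is $|\mu|$-null. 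Set $A_\varepsilon := \mathbb{R}^2 \setminus \overline{\bigcup_i C_\varepsilon^i}$.

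Outside the rectangles, Lemma \ref{lem:subgraph_finite_per} identifies $\widetilde{\Gamma}_h\cap A_\varepsilon$ and $\Gamma_{h_k}\cap A_\varepsilon$ ($\mathcal{H}^1$-a.e.) with the reduced boundaries $\partial^*\Omega_h\cap A_\varepsilon$ and $\partial^*\Omega_{h_k}\cap A_\varepsilon$. Lemma \ref{lem:Hausd_L1} gives $\Omega_{h_k}\cap A_\varepsilon \to \Omega_h\cap A_\varepsilon$ in $L^1$, and Theorem \ref{cc} yields
\[
\liminf_{k\to\infty}\int_{\Gamma_{h_k}\cap A_\varepsilon}\psi(u_k)\,\mathrm{d}\mathcal{H}^1 \;\ge\; \int_{\widetilde{\Gamma}_h\cap A_\varepsilon}\widetilde{\psi}(u)\,\mathrm{d}\mathcal{H}^1 + \theta\,\mu^s(\widetilde{\Gamma}_h\cap A_\varepsilon).
\]
Inside each $C_\varepsilon^i$, split vertically through the cut axis into $C_\varepsilon^{i,\ell}$ and $C_\varepsilon^{i,r}$. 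The Hausdorff-complement convergence implies $\Omega_{h_k}\cap C_\varepsilon^{i,\ell/r}\to C_\varepsilon^{i,\ell/r}$ in $L^1$ (the half-rectangle lies under the graph of $h$ on either side, and the cut is filled in the limit). Up to subsequences, $u_k\mathcal{H}^1\llcorner(\Gamma_{h_k}\cap C_\varepsilon^{i,\ell/r})$ converges weak-$*$ to a measure $\mu^{\ell/r}$ supported on $\Gamma^c_h\cap\overline{C_\varepsilon^i}$. Two applications of Theorem \ref{cc} on each half, combined with the definition of $\psi^c$ and the inequality $\psi^c(u)\le\widetilde{\psi}(u^\ell)+\widetilde{\psi}(u^r)$, give
\[
\liminf_{k\to\infty}\int_{\Gamma_{h_k}\cap C_\varepsilon^i}\psi(u_k)\,\mathrm{d}\mathcal{H}^1 \;\ge\; \int_{\Gamma^c_h\cap C_\varepsilon^i}\psi^c(u)\,\mathrm{d}\mathcal{H}^1 + \theta\,\mu^s(\Gamma^c_h\cap C_\varepsilon^i).
\]
Summing over $i$, combining with the outside estimate, and letting $\varepsilon\to 0$ (so $A_\varepsilon$ invades $\mathbb{R}^2\setminus\Gamma^c_h$ while the rectangles shrink to the cut segments) delivers the full surface inequality and hence $\mathcal{G}(\Omega,v,\mu)\le\liminf_k\mathcal{F}(\Omega_k,v_k,\mu_k)$.

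The hardest step will be the bookkeeping near each cut: verifying that the weak-$*$ limits $\mu^\ell$ and $\mu^r$ actually add up to $\mu\llcorner C_\varepsilon^i$ with no double-counting on the common slicing line, and that their absolutely continuous densities satisfy $u^\ell+u^r=u$ pointwise $\mathcal{H}^1$-a.e.\ on $\Gamma^c_h$. This requires ensuring the slicing lines and rectangle boundaries are $|\mu|$-null (exploiting that each cut is a single vertical segment of finite length, so generic choices of the slicing parameters work), and then using $\mu(\partial C_\varepsilon^i)=0$ to invoke the portmanteau property of weak-$*$ convergence.
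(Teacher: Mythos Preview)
Your overall architecture is the same as the paper's: separate bulk and surface, reduce to finitely many cuts via Lemma \ref{lem:finitely_many_cuts}, invoke Theorem \ref{cc} away from the cuts, and near each cut split into a left and a right half so that two applications of Theorem \ref{cc} produce $\widetilde{\psi}(u^\ell)+\widetilde{\psi}(u^r)\ge\psi^c(u)$. The bulk argument and the outside-the-rectangles argument are fine.

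The gap is in how you split near a cut. You slice each $C_\varepsilon^i$ along the \emph{fixed} vertical line $\{x=x^i\}$ through the cut and then apply Theorem \ref{cc} to $E_k^{\ell/r}:=\Omega_{h_k}\cap C_\varepsilon^{i,\ell/r}$. Two problems arise. First, the claim that the mid-line can be chosen $|\mu|$-null is false: the cut itself lies on $\{x=x^i\}$, and $\mu$ is supported on $\Gamma_h\supset\Gamma^c_h$, so in general $\mu(\{x=x^i\})>0$. Second, and more seriously, when you intersect $\Omega_{h_k}$ with a half-rectangle, the set $E_k^{\ell}$ acquires an artificial vertical boundary along the slicing line, namely $\{x^i\}\times(0,\min(h_k(x^i),H))$, where $H$ is the rectangle height. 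Theorem \ref{cc} requires a density on all of $\partial^* E_k^\ell\cap C_\varepsilon^i$, not only on $\Gamma_{h_k}$; you never define $u_k$ on this segment, and its length $h_k(x^i)$ need not converge (pointwise values of $h_k$ at a cut point are uncontrolled under the Hausdorff-complement convergence). Without handling this segment, the left-hand side of Theorem \ref{cc} is not $\int_{\Gamma_{h_k}\cap C_\varepsilon^{i,\ell}}\psi(u_k)\,\mathrm{d}\mathcal{H}^1$, and the cancellation you need to recover the pure graph energy does not go through.

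The paper fixes this by making the splitting line $k$-dependent: using Kuratowski convergence (Proposition \ref{kuratowskyconvergence}) one picks $x_k\to x^i$ with $h_k(x_k)\to h(x^i)$, i.e.\ the graph of $h_k$ is near the \emph{bottom} of the cut at $x_k$. One extends $u_k$ by $0$ on the short artificial segment $S_k=\{x_k\}\times(0,h_k(x_k))$; since $\mathcal{H}^1(S_k)\to h(x^i)=\mathcal{H}^1(S)$, the extra contribution $\int_{S_k}\psi(0)$ on the approximating side converges to $\int_S\psi(0)$, which matches exactly the below-cut portion of $\partial^* R^{\ell/r}\cap\mathcal{R}_\delta^\varepsilon$ on the limit side (where the limiting density is $0$, so $\widetilde{\psi}(0)=\psi(0)$), and the two cancel. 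Inserting this moving splitting line and the zero-extension into your Step 4 repairs the argument; the rest of your proposal then matches the paper.
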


\begin{proof}
	Fix $\varepsilon>0$ and consider the set
	\begin{align*}
		C_\varepsilon\coloneqq \{\blue{\textbf{x}=(x,y)}\in\Gamma^c:h^-(x)-y<\varepsilon\}.
	\end{align*}
    By a standard measure theory argument, it is possible to choose $\varepsilon$ such that $\mu(\Gamma\cap\partial C_\varepsilon)=0$.
	As a consequence, from Lemma \ref{lem:finitely_many_cuts}, we have that $\Gamma^c\setminus C_\varepsilon$ consists of a finite number of vertical segments, whose projections  on the $x$-axes corresponds to the set $(x^i)_{i=1}^N$. \blue{Recalling the definition of $\Gamma^c$ (see Definition \ref{cuts}), it holds that $C_\varepsilon$ is monotonically converging to the empty set, as $\varepsilon\to0$. Therefore, we get that}
	\begin{equation}\label{eq:conv_liminf}
		\mu(C_\varepsilon)\to0,\quad\quad\quad
		\mu(\Gamma^c\setminus C_\varepsilon)\to \mu(\Gamma^c),
	\end{equation}
	as $\varepsilon\to0$.
	\blue{Let $\delta=\delta(\varepsilon)>0$ such that we have $\delta<|x^i-x^j|$, for every $i,j=1,\dots,N$.}
	As we have a finite number of cuts, in order to simplify the notation, we do the following construction as we had only one cut point, and then we repeat it for each other one.
	
	Fix $i\in{1,\dots, N}$. Since $\R^2\setminus\o_k\stackrel{H}{\rightarrow}\R^2\setminus \o$, for every cut point $\big(x^i,h(x^i)\big)$, there is a sequence of the form $\big(x_k,h_k(x_k)\big)_k$ such that $(x_k)_k\subset(x^i-\delta,x^i+\delta)$ and $\big(x_k,h_k(x_k)\big)\to\big(x^i,h(x^i)\big)$ as $k\to\infty$. Indeed, by Proposition \ref{kuratowskyconvergence} there is a sequence $(x_k,y_k)_k\subset \R^2\setminus \o_k$ such that $(x_k,y_k)\to \big(x^i,h(x^i)\big)$. By definition, we have that $h_k(x_k)\leq y_k$ , up to a subsequence (not relabelled), we have that $\big(x_k,h_k(x_k)\big)\to(x^i,z^i)$, for some $z^i\in\R$. We would like to have $z^i=h(x^i)$. If we had $z^i> h(x^i)$, then
	\begin{align*}
		\lim_{k\to\infty}h_k(x_k)\leq h(x^i)<z^i,
	\end{align*}
	which contradicts our convergence above. Vice versa, if $z^i<h(x^i)$, then $(x^i,z^i)\notin \R^2\setminus\o$. In conclusion we have $z^i=h(x^i)$ and thus $\big(x_k,h_k(x_k)\big)\to \big(x^i,h(x^i)\big)$, as $k\to\infty$.\\

	Around each vertical cut, we set, for each $k\in\N$ (see Figure \ref{fig:liminf}),
	\[
	R_k^\ell\coloneqq (x^i-\delta,x_k)\times(0,h^-(x^i)-\varepsilon), \quad\quad
	R_k^r\coloneqq (x_k,x^i+\delta)\times(0,h^-(x^i)-\varepsilon),
	\]
	and
	\[
	\mathcal{R}_\delta^\varepsilon\coloneqq R_k^\ell\cup R_k^r\cup\big[\{x_k\}\times (0,h^-(x^i)-\varepsilon)\big].
	\]
\blue{Thanks to the existence of the right and left limits of $h$ at every point (see Theorem \ref{thm:properties_PBV}), up to further reducing $\delta$, we can assume that 
\begin{equation*}
\mathcal{R}_\delta^\varepsilon\cap \Gamma = \{x_k\}\times (0,h^-(x^i)-\varepsilon).
\end{equation*}
}
	Now we split the energy in the following way. Take any $(\Omega_k,v_k,\mu_k)_k\subset\mathcal{A}_r$ such that $(\Omega_k,v_k,\mu_k)\to(\Omega,v,\mu)$ as $k\to\infty$. We have
	\begin{align}\label{liminfbegin2}
		\nonumber\liminf_{k\to\infty}\Big[\int_{\Omega_k} W\big(E(v_k)&-E_0(y)\big)\,\text{d}\textbf{x}+\int_{\Gamma_k}\psi(u_k)\dho\Big]\\[5pt]
		\nonumber\geq&\liminf_{k\to\infty}\int_{\Omega_k} W\big(E(v_k)-E_0(y)\big)\,\text{d}\textbf{x}+\liminf_{k\to\infty}\int_{\Gamma_k\setminus 	\mathcal{R}_\delta^\varepsilon}\psi(u_k)\dho\\[5pt]
		&\hspace{1cm}+\liminf_{k\to\infty}\int_{\Gamma_k\cap 	\mathcal{R}_\delta^\varepsilon}\psi(u_k)\dho.
	\end{align}
	We are going to estimate each term on the right-hand side of \eqref{liminfbegin2} separately.

	\begin{figure}
		\includegraphics[scale=1]{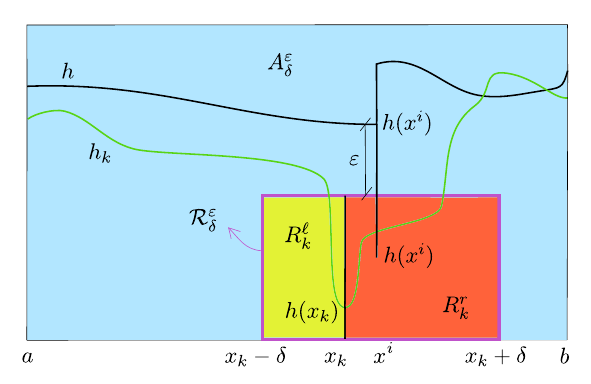}
		\caption{The rectangles we are using for the estimate of the liminf. In particular, the set $A_\delta^\varepsilon$ is the light blue, while the boundary of the rectangle $\mathcal{R}_\delta^\varepsilon$ is the one in purple.}
		\label{fig:liminf}
	\end{figure}

	\emph{Step 1}. Here we estimate the bulk term on the right-hand side of \eqref{liminfbegin2}. Since $v_k\wto v$ in $W^{1,2}_\mathrm{loc}(\Omega;\R^2)$ as $k\to\infty$, for every compactly contained set $K\subset\Omega$, we get
	\begin{align*}
		\nonumber\liminf_{k\to\infty}\int_{\Omega_k}W\big(E(v_k)-E_0(y)\big)\,\text{d}\textbf{x}&\geq \liminf_{k\to\infty}\int_{K}W\big(E(v_k)-E_0(y)\big)\,\text{d}\textbf{x}\\[5pt]
		&\geq\int_{K}W\big(E(v)-E_0(y)\big)\,\text{d}\textbf{x},
	\end{align*}
	as $E(\cdot)$ is linear and $W(\cdot)$ is convex. Since $K$ is arbitrary, we can conclude by taking an increasing sequence $(K_j)_j$ of sets compactly contained in $\Omega$ with $|\o\setminus K_k|\to 0$ as $k\to\infty$.
	Thus, by using the Monotone Convergence Theorem
	\begin{equation}\label{liminf1}
		\liminf_{k\to\infty}\int_{\Omega_k}W\big(E(v_k)-E_0(y)\big)\,\text{d}\textbf{x}
		\geq\int_{\Omega}W\big(E(v)-E_0(y)\big)\,\text{d}\textbf{x},
	\end{equation}
	we get the liminf for the bulk term. \\
	
	\emph{Step 2}. For the second term on the right-hand side of \eqref{liminfbegin2}, we would like to apply Theorem \ref{cc}. 
	Fix $\varepsilon>0$. By knowing that for each $k\in\N$ we have $|h_k|\leq M$, we define the open set
	\begin{align*}
		A_\delta^\varepsilon\coloneqq \big([a,b]\times [0,M]\big)\setminus \overline{	\mathcal{R}}_\delta^\varepsilon.
	\end{align*}
	We have that $A_\delta^\varepsilon\cap\o_k\to A_\delta^\varepsilon\cap \o$ in $L^1$ as $k\to\infty$. From Lemma \ref{lem:subgraph_finite_per}, we have that
	\begin{align*}
		\ho\big((\partial^\ast \o\cap A_\delta^\varepsilon)\Delta \widetilde{\Gamma}\big)=0.
	\end{align*}
	By definition, we can write
	\begin{align*}
		u_k\ho\llcorner(\partial\o_k\cap A_\delta^\varepsilon)\wtom \mu\llcorner	A_\delta^\varepsilon=u\ho\llcorner\widetilde{\Gamma}+\mu^s\llcorner A_\delta^\varepsilon+u\ho\llcorner C_\varepsilon,
	\end{align*}
	as $k\to\infty$, and, by applying Theorem \ref{cc}, we have
	\begin{align}\label{liminf2}
		\liminf_{k\to\infty}\int_{\partial\o_k\cap A_\delta^\varepsilon}\psi(u_k)\dho\geq \int_{\widetilde{\Gamma}\cap A_\delta^\varepsilon}\widetilde{\psi}(u)\dho+\theta\mu^s(A_\delta^\varepsilon)+\theta \int_{C_\varepsilon}u\dho,
	\end{align}
	as desired.\\

	\emph{Step 3}.  We now deal with the third term on the right-hand side of \eqref{liminfbegin2}. 
	Define
	\begin{align} \label{eq:def_Ek}
		E_k^\ell\coloneqq \o_k\cap R_k^\ell\quad\text{and}\quad
		E_k^r\coloneqq \o_k\cap R_k^r.
	\end{align}
	Using Lemma \ref{lem:Hausd_L1} we obtain that
	\begin{align*}
		E_k^\ell\to R^\ell&\coloneqq (x^i-\delta,x^i)\times(0,h^-(x^i)-\varepsilon),\\[5pt]
		E_k^\ell\to R^r&\coloneqq (x^i,x^i+\delta)\times(0,h^-(x^i)-\varepsilon),
	\end{align*}
	as $k\to\infty$ in $L^1$. Note that, for every $k$ large enough, both $E_k^\ell\neq\emptyset$ and $E_k^r\neq\emptyset$. Furthermore, notice that
	\begin{align*}
		\partial  E_k^\ell \cap R^\ell &=\big(\Gamma_k\cap R_k^\ell\big)\cup\big[ \{x_k\}\times\big(0,h_k(x_k)\big)\big],\\[5pt]
		\partial  E_k^r \cap R^r &=\big(\Gamma_k\cap R_k^r\big)\cup\big[ \{x_k\}\times\big(0,h_k(x_k)\big) \big].
	\end{align*}
	We now define the densities
	\begin{align*}
		u_k^\ell (\textbf{x})&\coloneqq\begin{cases}
			u_k(\textbf{x})\quad& \textbf{x}\in\Gamma_k\cap R_k^\ell, \\[5pt]
			0\quad&\textbf{x}\in\{x_k\}\times(0,h_k(x_k)),
		\end{cases}\\
		u_k^r (\textbf{x})&\coloneqq\begin{cases}
			u_k(\textbf{x})\quad &\textbf{x}\in\Gamma_k\cap R_k^r,\\[5pt]
			0\quad&\textbf{x}\in\{x_k\}\times(0,h_k(x_k)).
		\end{cases}
	\end{align*}
	We now prove that that
	\begin{align*}
		\mu_k^\ell\coloneqq u_k^\ell\ho\llcorner(\partial E_k^\ell\cap R^\ell)\wtom\mu^\ell&\coloneqq
		f\ho\llcorner(\Gamma^c\setminus C_\varepsilon)+(\mu^\ell)^s,\\[5pt]
		\mu_k^r\coloneqq	u_k^r\ho\llcorner(\partial E_k^r\cap R^r)\wtom\mu^r&\coloneqq
		g\ho\llcorner(\Gamma^c\setminus C_\varepsilon)+(\mu^r)^s,
	\end{align*}
	for some $f,g\in L^1(\Gamma^c\setminus C_\varepsilon)$ such that
	\begin{align}\label{fgu}
		f+g=u_{|\Gamma^c\setminus C_\varepsilon},
	\end{align}
	and
	\begin{align}\label{fgusing}
		(\mu^\ell)^s+(\mu^r)^s=\mu^s,
	\end{align}
	where $(\mu^\ell)^s$ and $(\mu^r)^s$ are supported in $\Gamma^c\setminus C_\varepsilon$.
	Notice that
	\[
	\mu_k^\ell\big(\{x_k\}\times(0,h_k(x_k))\big)=\mu_k^r\big(\{x_k\}\times(0,h_k(x_k))\big)=0
	\]
	holds for every $k\in\N$.
	By definition we have $\mu_k^\ell+\mu_k^r=\mu_k$, for every $k\in\N$.
	Moreover, for every set $A$, measurable with respect to $\mu_k$ (thus also for $\mu_k^\ell$ and $\mu_k^r$), we have
	\begin{align*}
		\mu_k^\ell(A)\leq \mu_k(A)=\int_{\Gamma_k\cap A}u_k\dho =||u_k||_{L^1(\Gamma_k\cap A)}\leq L,
	\end{align*}
	where $L$ is a constant independent of $A$, and is given by the fact that the sequence $(\mu_k)_k$ is weakly$^\ast$ converging. The same bound for $\mu_k^r$ also holds. We have that, up to a subsequence (not relabelled), there are two Radon measures $\mu^\ell$ and $\mu^r$ such that
	\begin{align*}
		\mu_k^\ell\wtom \mu^\ell\quad\text{and}\quad
		\mu_k^r\wtom \mu^r,
	\end{align*}
	as $k\to\infty$.\\
	
	We claim that $\mathrm{supp}(\mu^\ell)\subset \Gamma^c\setminus C_\varepsilon$ and $\mathrm{supp}(\mu^r)\subset \Gamma^c\setminus C_\varepsilon$. Indeed, take any set $A$ such that $\mu\big((\Gamma^c\setminus C_\varepsilon)\cap\partial A\big)=0$ and $A\cap (\Gamma^c\setminus C_\varepsilon)=\emptyset$. Then $\mu\big((\Gamma^c\setminus C_\varepsilon)\cap A\big)=0$. If we had $\mu^\ell\big((\Gamma^c\setminus C_\varepsilon)\cap A\big)>\mu\big((\Gamma^c\setminus C_\varepsilon)\cap A\big)$, we would have
	\begin{align*}
		\mu\big((\Gamma^c\setminus C_\varepsilon)\cap A\big)=\lim_{k\to\infty}\mu_k\big((\Gamma^c\setminus C_\varepsilon)\cap A\big)\geq\lim_{k\to\infty}\mu_k^\ell\big((\Gamma^c\setminus C_\varepsilon)\cap A\big)=\mu^\ell\big((\Gamma^c\setminus C_\varepsilon)\cap A\big),
	\end{align*}
	and this implies that $\mu^\ell\big((\Gamma^c\setminus C_\varepsilon)\cap A\big)=0$. Thus $\mu^\ell\leq\mu$ and if $\mu\big((\Gamma^c\setminus C_\varepsilon)\cap A\big)=0$, then also $ \mu^\ell\big((\Gamma^c\setminus C_\varepsilon)\cap A\big)=0$. 
	As the same holds for $\mu^r$, we conclude our claim.
	
	Then, there are $f,g\in L^1(\Gamma^c\setminus C_\varepsilon)$ for which we can write
	\begin{align*}
		\mu^\ell=f\ho\llcorner(\Gamma^c\setminus C_\varepsilon)+\	(\mu^\ell)^s\quad\text{and}\quad
		\mu^r=	g\ho\llcorner(\Gamma^c\setminus C_\varepsilon)+\	(\mu^r)^s,
	\end{align*}
	with $	(\mu^\ell)^s$ and $	(\mu^r)^s$ are singular measures with respect to $f\ho\llcorner(\Gamma^c\setminus C_\varepsilon)$ and $g\ho\llcorner(\Gamma^c\setminus C_\varepsilon)$ respectively.\
	We now prove that $\mu=\mu^\ell+\mu^r$.
	Notice that for every $\varphi\in C_c(\R^2)$,
	\begin{align*}
		\int_{\partial  E_k^\ell \cup \partial  E_k^r}\varphi \,\text{d}\mu_k\to \int_{\Gamma^c\setminus C_\varepsilon}\varphi \,\text{d}\mu,
	\end{align*}
	as $k\to\infty$, from the fact that $\mu_k\wtom\mu$. 
	On the other hand we have
	\begin{align*}
		\int_{\partial  E_k^\ell \cup \partial  E_k^r}\varphi \,\text{d}\mu_k&=\int_{\partial  E_k^\ell \cup \partial  E_k^r}\varphi \,\text{d}(\mu_k^\ell+\mu_k^r)
		=\int_{\partial  E_k^\ell }\varphi \,\text{d}\mu_k^\ell+\int_{\partial  E_k^r }\varphi \,\text{d}\mu_k^r\\[5pt]
		&\underset{k\to\infty}{\longrightarrow} \int_{\Gamma^c\setminus C_\varepsilon}\varphi \,\text{d}\mu^\ell+\int_{\Gamma^c\setminus C_\varepsilon}\varphi \,\text{d}\mu^r.
	\end{align*}
	Since $\varphi\in C_c(\R^2)$ is arbitrary, we get $\mu=\mu^\ell+\mu^s$ . In particular, we obtain \eqref{fgu} and \eqref{fgusing}.\\
	
	We now prove the convergence of the energy. Set
	\begin{align*}
		S_k\coloneqq \{x_k\}\times \big(0,h_k(x_k)\big)\quad\text{and}\quad
		S\coloneqq\{x^i\}\times \big(0,h(x^i)\big).
	\end{align*}
	We notice that $\ho(S_k)\to\ho(S)$ as $k\to\infty$. In particular, this implies that
	\begin{align}\label{sktos}
		\lim_{k\to\infty} \int_{S_k}\psi(0) \dho= \int_S \psi(0)\dho.
	\end{align}
	
	Now, we want to apply Theorem \ref{cc}.
	Recalling Definition \ref{eq:def_Ek} of the sets $E^\ell_k$ and $E^r_k$, we obtain
	\begin{align*}
		&\liminf_{k\to\infty}
		\int_{\Gamma_k\cap \mathcal{R}_\delta^\varepsilon}\psi(u_k)\dho
		+ 2\int_{S}\psi(0) \dho \\[5pt]
		&\hspace{0.3cm}=\liminf_{k\to\infty}
		\left[ \int_{\Gamma_k\cap \mathcal{R}_\delta^\varepsilon}\psi(u_k)\dho
		+ 2\int_{S_k}\psi(0) \dho \right]\\[5pt]
		&\hspace{0.3cm}=\liminf_{k\to\infty} \left[ \int_{\partial E_k^\ell\cap \mathcal{R}_\delta^\varepsilon}\psi(u_k^\ell)\dho
		+\int_{\partial E_k^r\cap \mathcal{R}_\delta^\varepsilon}\psi(u_k^r)\dho \right]\\[5pt]	
		&\hspace{0.3cm}\geq\liminf_{k\to\infty}\int_{\partial E_k^\ell\cap \mathcal{R}_\delta^\varepsilon}\psi(u_k^\ell)\dho
		+\liminf_{k\to\infty}\int_{\partial E_k^r\cap \mathcal{R}_\delta^\varepsilon}\psi(u_k^r)\dho\\[5pt]
		&\hspace{0.3cm}\geq \int_{\partial R^\ell\cap \mathcal{R}_\delta^\varepsilon }\widetilde{\psi}(f)\dho +\theta(\mu^\ell)^s( \partial R^\ell\cap \mathcal{R}_\delta^\varepsilon   )
		+ \int_{\partial R^r\cap \mathcal{R}_\delta^\varepsilon}\widetilde{\psi}(g)\dho
		+\theta(\mu^r)^s( \partial R^r\cap \mathcal{R}_\delta^\varepsilon)\\[5pt]
		&\hspace{0.3cm}= \int_{\Gamma^c\setminus C_\varepsilon}\widetilde{\psi}(f)\dho+\theta(\mu^\ell)^s( \Gamma^c\setminus C_\varepsilon)
		+ \int_{\Gamma^c\setminus C_\varepsilon}\widetilde{\psi}(g)\dho
		+\theta(\mu^r)^s( \Gamma^c\setminus C_\varepsilon)
		+2\int_S\psi(0)\dho \\[5pt]
		&\hspace{0.3cm}\geq\int_{\Gamma^c\setminus C_\varepsilon}\psi^c(u)\dho+\theta\mu^s(\Gamma^c\setminus C_\varepsilon)+2\int_S\psi(0)\dho,
	\end{align*}
	where the last inequality follows from \eqref{fgu} together with the definition of $\psi^c$. Thus,
	\begin{equation}\label{liminf3}
		\liminf_{k\to\infty}
		\int_{\Gamma_k\cap \mathcal{R}_\delta^\varepsilon}\psi(u_k)\dho\geq 
		\int_{\Gamma^c\setminus C_\varepsilon}\psi^c(u)\dho+\theta\mu^s(\Gamma^c\setminus C_\varepsilon),
	\end{equation}
	for all $\varepsilon>0$\\
	
	\emph{Step 5}. Using \eqref{liminfbegin2}, \eqref{liminf1}, \eqref{liminf2} and \eqref{liminf3} we obtain
	\begin{align*}
		\liminf_{k\to\infty}\Big[\int_{\Omega_k} W\big(E(v_k)&-E_0(y)\big)\,\text{d}\textbf{x}+\int_{\Gamma_k}\psi(u_k)\dho\Big]
		\geq\int_{\Omega}W\big(E(v)-E_0(y)\big)\,\text{d}\textbf{x} \\[5pt]
		&+\int_{\widetilde{\Gamma}\cap A_\delta^\varepsilon}\widetilde{\psi}(u)\dho+\theta\mu^s(A_\delta^\varepsilon)+\theta \int_{C_\varepsilon}u\dho \\[5pt]
		&+\int_{\Gamma^c\setminus C_\varepsilon}\psi^c(u)\dho+\theta\mu^s(\Gamma^c\setminus C_\varepsilon).
	\end{align*}
	By letting $\varepsilon\to0$, and using \eqref{eq:conv_liminf} we get the desired liminf inequality.
\end{proof}

\section{Limsup inequality}

The goal of this section is to prove the limsup inequality for the mass constrained problem.\blue{ We recall that the classes $\mathcal{A}_r(m,M)$ and $\mathcal{A}_r(m,M)$ are given in Definitions \ref{ArmM} and \ref{AmM} respectively.}

\begin{theorem}\label{limsupinequality}
	Let $m,M>0$. Let $(\Omega,v,\mu)\in\mathcal{A}(m,M)$. Then, there exists a sequence of regular configurations $(\Omega_k,v_k,\mu_k)_k\subset\mathcal{A}_r(m,M)$ such that
	\begin{align*}
		\limsup_{k\to\infty}\f (\Omega_k,v_k,\mu_k) \leq \g(\Omega,v,\mu),
	\end{align*}
	and with $(\Omega_k,v_k,\mu_k)\to(\Omega,v,\mu)$ as $k\to\infty$.
\end{theorem}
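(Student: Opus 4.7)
The plan is to build the recovery sequence through successive reductions, one per source of complexity in $(\Omega,v,\mu) \in \mathcal{A}(m,M)$: the elastic term, the adatom density $\mu$, the cuts of $h$, and the superlinear regime $u > s_0$ of $\widetilde\psi$. The bulk term is standard: for any Lipschitz approximation $h_k \to h$ in Hausdorff-complement, one extends $v$ to a fixed neighbourhood of $\Omega_h$ and restricts to $\Omega_{h_k}$, so that $\int_{\Omega_{h_k}} W(E(v) - E_0(y)) \to \int_{\Omega_h} W(E(v) - E_0(y))$ by Lemma \ref{lem:Hausd_L1} and continuity of $W$. The novelty is entirely in the surface term, which I would treat in three steps matching the structure announced in Propositions \ref{step1limsup}, \ref{propfinitenumbercuts}, \ref{step3limsup}, \ref{proposition3}.

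First I would reduce to $\mu = u\,\ho\restr\Gamma_h$ with $u$ piecewise constant on a grid $(\widetilde Q_j)_j$ adapted to $h$ via Lemma \ref{lemmanonempty}, by replacing $u$ on $\Gamma_h\cap\widetilde Q_j$ with its average. Convexity of $\widetilde\psi$ and $\psi^c$ ensures that this only increases the energy, while refining the mesh gives $\mathcal{G}(h,v,u_r) \to \mathcal{G}(h,v,u)$. Next, I would approximate a general $h\in\bv(a,b)$ (cut off via Lemma \ref{lem:finitely_many_cuts} to finitely many cuts of length $\ge\varepsilon$) by Lipschitz profiles $h_k$ using the Moreau–Yosida construction of \cite{BonCha02,FonFusLeoMor07}: on each side of a cut at $x^i$ one takes the infimal convolution of $h$ with $k|\cdot|$, and linearly interpolates across the vertical cut. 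On the graph of each $h_k$ one keeps the piecewise-constant $u_k$ inherited from the grid on the regular/jump part, and on the two branches approaching the cut one assigns densities $r^i, t^i$ realizing the minimum $\psi^c(u(x^i)) = \widetilde\psi(r^i) + \widetilde\psi(t^i)$ in Definition \ref{psic}. Since $\R^2\setminus\Omega_{h_k}\to\R^2\setminus\Omega_h$ in Hausdorff gives $\ho(\Gamma_{h_k}\cap\widetilde Q_j)\to\ho(\Gamma_h\cap\widetilde Q_j)$ in each cell, the surface energies converge to $\int_{\widetilde\Gamma}\widetilde\psi(u)\dho + \int_{\Gamma^c}\psi^c(u)\dho$, and sending $\varepsilon\to 0$ absorbs the short cuts into the recession term $\theta\mu^s(\Gamma)$.

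Finally, on each cell $Q$ where the piecewise-constant value $u_k$ exceeds $s_0$, Lemma \ref{lem:char_psi_s0} gives $\widetilde\psi(u_k)\ho(\Gamma_{h_k}\cap Q) = \psi(s_0)\cdot(u_k/s_0)\,\ho(\Gamma_{h_k}\cap Q)$. Applying Proposition \ref{step3limsup} on the corresponding subinterval with $r = u_k/s_0$, I obtain an oscillating Lipschitz profile $h_k^{\sharp}$ with $\ho(\Gamma_{h_k^\sharp}) = r\,\ho(\Gamma_{h_k}\cap Q)$ and declare the density to be $s_0$ on $\Gamma_{h_k^\sharp}$. The pointwise surface energy equals $\psi(s_0)\cdot r\,\ho(\Gamma_{h_k}\cap Q) = \widetilde\psi(u_k)\ho(\Gamma_{h_k}\cap Q)$, and property $(v)$ of Proposition \ref{step3limsup} yields $u_k\ho\restr\Gamma_{h_k^\sharp}\wtom u_k\ho\restr(\Gamma_{h_k}\cap Q)$. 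The singular part $\mu^s$ is produced in the same way: adding oscillating length $\mu^s(Q)/s_0$ with density $s_0$ on each relevant cell contributes exactly $\psi(s_0)\cdot\mu^s(Q)/s_0 = \theta\,\mu^s(Q)$ to the energy, using $\theta = \psi(s_0)/s_0$ from Lemma \ref{lem:char_psi_s0}. A diagonal extraction combines the three reductions into a single recovery sequence.

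The main obstacle will be the simultaneous preservation of both mass constraints $\int u_k \dho = m$ and $\mathcal{L}^2(\Omega_{h_k}\cap\{y\ge 0\}) = M$ throughout all three stages, since the wriggling lifts $h_k$ above $h$ and the cut-splitting and density-averaging perturb the total adatom mass. I plan to handle this as follows: reserve one grid cell $Q_\ast$ where $u > 0$ and where the profile sits well above the substrate; the area error $M - \mathcal{L}^2(\Omega_{h_k}\cap\{y\ge 0\})$, which is $o(1)$, is corrected by a small vertical shift of $h_k$ on $Q_\ast$, compatible with $h_k \ge 0$ for $k$ large; the mass error is then absorbed by replacing $u_k$ on $\Gamma_{h_k}\cap Q_\ast$ by $u_k + \delta_k$ with $\delta_k \to 0$. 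Continuity of $\psi$ and $W$ ensures both corrections cost $o(1)$ energy, producing the desired sequence in $\mathcal{A}_r(m,M)$.
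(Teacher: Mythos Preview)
Your three-stage reduction (grid-constant density $\to$ Lipschitz profile via Moreau--Yosida with cut-splitting densities $r^i,t^i$ realizing $\psi^c$ $\to$ wriggling where $u>s_0$, plus a diagonal argument) is exactly the scheme the paper carries out in Propositions~\ref{step1limsup}, \ref{propfinitenumbercuts}, and \ref{proposition3}. One point to tighten: you cannot in general extend $v\in W^{1,2}(\Omega_h;\R^2)$ to a neighbourhood of $\Omega_h$ when $h$ has cuts, since $\Omega_h$ is then not an extension domain; the paper instead arranges $\hat h_k\le h$ (Moreau--Yosida from below, linear interpolation into the cut), so that $\Omega_{\hat h_k}\subset\Omega_h$ and $v$ simply restricts, and then repairs the area constraint by a global shift $h_k=\hat h_k+\varepsilon_k$ with $v_k(x,y)\coloneqq v(x,y-\varepsilon_k)$, which is well-defined because $v$ is already given on the whole subgraph down to $y=-\infty$.
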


The proof is long and therefore it will be divided in several steps, each proved in a separate result.
\blue{In particular, we will need to work with a specific class of piecewise constant functions, that we introduce here.

\begin{definition}
	Let $h\in\bv(a,b)$, and $\delta>0$. We say that a finite family $(Q^j)_{j=1}^N$ of open and pairwise disjoint rectangles is \emph{$\delta$-admissible covering} for $\Gamma$, if
	\begin{itemize}
		\item[(i)] The side lengths of each $Q^j$ is less than $\delta$;
		\item [(ii)]It holds
		\[
		\Gamma \subset \bigcup_{i=1}^N \overline{Q^j};
		\]
		\item[(iii)] $\ho(\Gamma\cap\partial Q^j)=0$  for all $j=1,\dots, N$.
	\end{itemize}
\end{definition}

A simple result that will be use repeatedly without mentioning it is the following (see (a) of \ref{lemmanonempty}).

\begin{lemma}
	Let $h\in\bv(a,b)$, and $\delta>0$. Then, there exists a $\delta$-admissible covering for $\Gamma$.
\end{lemma}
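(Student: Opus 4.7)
The plan is to build the cover from a uniform grid of squares of side length strictly less than $\delta$, then shift it so that condition (iii) holds; this follows the translation scheme carried out in part (a) of Lemma \ref{lemmanonempty}.

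First, since $h \in \bv(a,b)$ the function $h$ is bounded, say $0 \leq h \leq M$, so the extended graph $\Gamma$ is contained in the bounded rectangle $R := [a,b] \times [0,M]$. Fix any $r \in (0,\delta)$, let $(z_j)_{j\in\N}$ be an enumeration of $\Z^2$, and set $Q_j := r z_j + r(0,1)^2$. Because $R$ is bounded, only finitely many of the $Q_j$'s, and of any fixed translate of them, intersect $R$; so as soon as we can guarantee property (iii) for a translated grid, properties (i) and (ii) (the latter from the fact that the closed translated squares tile $\R^2$) will be automatic after keeping only those squares meeting $R$.

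The remaining task is to find a translation $v = (v_1,v_2) \in \R^2$ such that $\widetilde{Q}_j := v + Q_j$ satisfies $\ho(\Gamma \cap \partial \widetilde{Q}_j) = 0$ for every $j$. The boundary $\partial \widetilde{Q}_j$ consists of horizontal and vertical segments. A vertical line $\{x = x_0\}$ meets $\Gamma$ in a set of positive $\ho$-measure precisely when $x_0$ is either a jump point or a cut point of $h$, and by Theorem \ref{thm:properties_PBV} there are only countably many such $x_0$. Hence almost every horizontal shift $v_1 \in (0,r)$ places all the vertical grid lines outside this countable set. A horizontal line $\{y = y_0\}$ has positive $\ho$ intersection with $\Gamma$ only when the level set $\{x : h(x) = y_0\}$ contains an interval, which in particular forces $\hz(\{h = y_0\}) = +\infty$. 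Working with the lower semi-continuous representative of $h$ (available by Theorem \ref{thm:properties_PBV}) and applying the coarea formula, the set of such bad levels $y_0$ is Lebesgue-null in $\R$, and the periodic averaging argument used in Step (a) of Lemma \ref{lemmanonempty} then shows that almost every vertical shift $v_2 \in (0,r)$ avoids these bad heights on the whole translated grid. Combining the two gives the desired translation $v$.

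Collecting the finitely many $\widetilde{Q}_j$ that meet $R$ into a family $(Q^j)_{j=1}^N$ yields the admissible cover. The only mildly delicate ingredient is the vertical shift, which genuinely uses the coarea formula plus the translation-averaging argument already worked out in Lemma \ref{lemmanonempty}; everything else reduces to the countability of jumps and cuts and to the boundedness of $\Gamma$.
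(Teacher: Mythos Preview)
Your proof is correct and follows exactly the route the paper intends: the paper's own proof is just the one-line reference ``see (a) of Lemma~\ref{lemmanonempty}'', and you have spelled out precisely that translation argument. One small slip: the claim that positive $\ho$-intersection with a horizontal line forces the level set $\{h=y_0\}$ to \emph{contain an interval} is not literally true (a fat Cantor set gives positive measure without any interval), but you immediately relax this to $\hz(\{h=y_0\})=+\infty$, which is the correct criterion and is all that the coarea argument needs.
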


\begin{definition}\label{gridconstant}
	Let $h\in\bv(a,b)$, and $\delta>0$. A function $u\in L^1(\Gamma)$ is called \emph{$\delta$-grid constant} if there exists a $\delta$-admissible covering for $\Gamma$, such that $u_{|Q^j\cap \Gamma}=u^j\in\R$, for every $j=1,\dots, N$.
	Moreover, we say that $u\in L^1(\Gamma)$ is \emph{grid constant} if there exists $\delta>0$ such that it is $\delta$-grid constant.
\end{definition}

We are now in position to explain the steps of the strategy that we will use in order to prove Theorem \ref{limsupinequality}.}

\begin{enumerate}
	\item[Step 1:] For any configuration $(\o,v,\mu)\in\mathcal{A}(m,M)$, we find a sequence $(u_k)_k\subset L^1(\Gamma)$ where each $u_k$ is a grid constant function, such that $\mu_k:=u_k\ho\llcorner\Gamma\wtom \mu$ as $k\to\infty$,
	$(\Omega,v,\mu_k)\in \mathcal{A}(m,M)$ for all $k\in\N$, and
	\begin{align*}
		\lim_{k\to\infty}\g(\o,v,\mu_k)\leq\g(\o,v,\mu).
	\end{align*}
	This will be proved in Proposition \ref{step1limsup};
	\item[Step 2:] Let $(\o,v,\mu)\in\mathcal{A}(m,M)$, be such that $\mu=u\ho\llcorner\Gamma$, and $u\in L^1(\Gamma)$ is grid constant. In Proposition \ref{propfinitenumbercuts}, we construct a sequence $\big(\Omega_k,v_k,\mu_k\big)_k\subset\mathcal{A}_r(m,M)$, where $\mu_k=u_k\ho\llcorner\Gamma_k$ and $u_k$ is grid constant, such that $(\o_k,v_k,\mu_k)\to (\o,v,\mu)$ as $k\to\infty$, and
	\begin{align*}
		\lim_{k\to\infty}\g(\Omega_k,v_k,\mu_k)=\g(\o,v,\mu);
	\end{align*}
	\item[Step 3:] For every configuration $(\o,v,\mu)\in\mathcal{A}_r$, with each $u_k$ grid constant, in Proposition \ref{proposition3} we build a sequence $\big(\o_k,v_k,\mu_k\big)_k\subset\mathcal{A}_r$ with $(\o_k,v_k,\mu_k)\to(\o,v,\mu)$ as $k\to\infty$, such that
	\begin{align*}
		\lim_{k\to\infty}\f(\o_k,v_k,\mu_k)=\g(\o,v,\mu);
	\end{align*}
	\item[Step 4:] From Propositions \ref{step1limsup}, \ref{propfinitenumbercuts} and \ref{proposition3} and a diagonalization argument we get the limsup inequality.
\end{enumerate}

\begin{remark}
	Using Theorem \ref{limsupinequality} with Theorem \ref{liminfinequality}, we have proved Theorem \ref{thm:main_no_mass} and Theorem \ref{thm:main_mass}.
\end{remark}

We now carry on Step 1: approximate any admissible configuration with a sequence of configurations where the density is grid constant.

\begin{proposition}\label{step1limsup}
	Let $(\o,v,\mu)\in\mathcal{A}(m,M)$. Then, there exists a sequence $(u_k)_k\subset L^1(\Gamma)$, with $u_k\in L^1(\Gamma)$ grid constant, such that $(\o,v,\mu_k)\to(\o,v,\mu)$, as $k\to\infty$, and
	\begin{align*}
		\lim_{k\to\infty}\g(\o,v,\mu_k)\leq\g(\o,v,\mu),
	\end{align*}
	where $\mu_k:=u_k\ho\llcorner \Gamma$. Moreover, $(\o,v,\mu_k)\in\mathcal{A}(m,M)$.
\end{proposition}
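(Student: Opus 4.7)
The plan is to build $u_k$ by \emph{block-averaging} $\mu$ over the pieces of a sequence of increasingly fine admissible covers. For each $k\in\N$ I would pick a $(1/k)$-admissible cover $\{Q^j_k\}_{j=1}^{N_k}$ of $\Gamma$ whose rectangles additionally satisfy $\mu(\partial Q^j_k)=0$; this is arranged exactly as in Lemma \ref{lemmanonempty}, since only countably many translates of the reference grid put a boundary line on a point of positive $\mu$-mass. Define
\[
u^j_k \coloneqq \frac{\mu(\Gamma\cap Q^j_k)}{\ho(\Gamma\cap Q^j_k)}
\]
whenever the denominator is positive (and $u^j_k:=0$ otherwise), and let $u_k := u^j_k$ on $\Gamma\cap Q^j_k$. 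The mass constraint is preserved by telescoping, $\int_\Gamma u_k\dho=\sum_j\mu(\Gamma\cap Q^j_k)=m$, so $(\Omega,v,\mu_k)\in\mathcal{A}(m,M)$. Weak-$\ast$ convergence $\mu_k\wtom\mu$ is immediate: using $\mu_k(Q^j_k)=\mu(Q^j_k)$ to cancel the constant-on-each-square part of a test function $\varphi\in C_c(\R^2)$, one gets $|\int\varphi\,\dd\mu_k-\int\varphi\,\dd\mu|\leq 2\omega_\varphi(\sqrt{2}/k)\,m\to 0$, where $\omega_\varphi$ is the modulus of continuity of $\varphi$.

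For the energy estimate, write $\mu = u\ho\llcorner\Gamma+\mu^s$ and decompose $u^j_k = \bar u_j + s_j$, where $\bar u_j$ is the $\ho$-average of $u$ on $\Gamma\cap Q^j_k$ and $s_j:=\mu^s(Q^j_k)/\ho(\Gamma\cap Q^j_k)$. Sub-additivity of $\widetilde{\psi}$ yields $\widetilde{\psi}(u^j_k)\leq\widetilde{\psi}(\bar u_j)+\widetilde{\psi}(s_j)$. On a rectangle of \emph{type A} (i.e.\ $\Gamma\cap Q^j_k\subset\widetilde{\Gamma}$), Jensen's inequality bounds $\widetilde{\psi}(\bar u_j)\ho(\Gamma\cap Q^j_k)\leq\int_{\widetilde{\Gamma}\cap Q^j_k}\widetilde{\psi}(u)\dho$, while the singular piece equals $\mu^s(Q^j_k)\cdot\widetilde{\psi}(s_j)/s_j$ and tends to $\theta\mu^s(Q^j_k)$ whenever $\mu^s(Q^j_k)>0$, because $s_j\to\infty$ and $\widetilde{\psi}(s)/s\to\theta$ by Definition \ref{theta}. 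An analogous bound applies on \emph{type B} rectangles inside $\Gamma^c$, using $\psi^c$ together with the \emph{common} recession coefficient $\theta$ from Lemma \ref{lem:thetac_thetawild}; this commonality is exactly what lets the singular mass be absorbed into a single $\theta\mu^s(\Gamma)$ term regardless of which side of $\Gamma$ it sits on.

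The main obstacle is the \emph{type C} mixed rectangles, containing both a piece of $\widetilde{\Gamma}$ and a piece of $\Gamma^c$. I would bound their contribution by $\psi^c(u^j_k)\ho(\Gamma\cap Q^j_k)$, using the inequality $\widetilde{\psi}\leq\psi^c$ that follows from sub-additivity of $\widetilde{\psi}$ applied to the minimising decomposition in Definition \ref{psic}, and then run the same Jensen/sub-additivity argument. The resulting excess over the target right-hand side is $\sum_{j\in C}\int_{\widetilde{\Gamma}\cap Q^j_k}(\psi^c-\widetilde{\psi})(u)\dho$. Since every type C rectangle meets $\Gamma^c$, the union $A_k:=\bigcup_{j\in C}Q^j_k$ lies in the $\sqrt{2}/k$-neighbourhood of $\Gamma^c$; consequently $\bigcap_k A_k\subset\overline{\Gamma^c}$, and $\widetilde{\Gamma}\cap\overline{\Gamma^c}$ reduces to the countable set of cut-tops $\{(x^i,h^-(x^i))\}$, which is $\ho$-null. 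Continuity from above thus forces $\ho(\widetilde{\Gamma}\cap A_k)\to 0$, and because $(\psi^c-\widetilde{\psi})(u)\in L^1(\widetilde{\Gamma})$ the excess vanishes in the limit. Summing over the three types and sending $k\to\infty$ gives $\lim_k\mathcal{G}(\Omega,v,\mu_k)\leq\mathcal{G}(\Omega,v,\mu)$. The most delicate point in executing the plan is making the convergence $\widetilde{\psi}(s_j)/s_j\to\theta$ uniform once the sum over $j$ is taken; I would handle this by isolating atoms of $\mu^s$ (captured one-per-rectangle for large $k$) from the continuous singular part and invoking Lebesgue differentiation on the latter.
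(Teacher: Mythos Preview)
Your block-averaging idea is natural and works when $h$ has finitely many cuts and $\mu^s$ has finitely many atoms, but both the type-C step and the singular-part step break down precisely in the ``dense'' situations that the paper's argument is designed to avoid.

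\textbf{The type-C estimate.} The claim that $\ho(\widetilde{\Gamma}\cap A_k)\to 0$, based on $\widetilde{\Gamma}\cap\overline{\Gamma^c}$ being $\ho$-null, is false when the cut set is dense. Take $h\equiv 1$ on $(0,1)$ except at dyadics $p/2^q$ (in lowest terms), where $h(p/2^q)=1-3^{-q}$; this $h$ is lower semicontinuous with $\mathrm{Var}(h)\leq\sum_q 2^{q-1}\cdot 2\cdot 3^{-q}<\infty$. Here $\widetilde{\Gamma}=(0,1)\times\{1\}$, the cuts accumulate to every point of this segment, and for \emph{any} $1/k$-grid every rectangle meeting $\widetilde{\Gamma}$ also contains the top of some cut. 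Hence all such rectangles are type~C, $A_k\supset\widetilde{\Gamma}$, and your excess $\sum_{j\in C}\int_{\widetilde{\Gamma}\cap Q_k^j}(\psi^c-\widetilde{\psi})(u)\dho$ equals $\int_{\widetilde{\Gamma}}(\psi^c-\widetilde{\psi})(u)\dho$, which is strictly positive as soon as $u$ is small on a set of positive measure (recall $\psi^c(0)=2\widetilde{\psi}(0)>\widetilde{\psi}(0)$). The paper sidesteps this by defining $u_k$ with \emph{two} values on each square, namely $\mu(\widetilde{\Gamma}\cap Q_k^j)/\ho(\widetilde{\Gamma}\cap Q_k^j)$ on $\widetilde{\Gamma}\cap Q_k^j$ and $\mu(\Gamma^c\cap Q_k^j)/\ho(\Gamma^c\cap Q_k^j)$ on $\Gamma^c\cap Q_k^j$; Jensen then applies directly with the correct integrand on each piece and no mixed-rectangle error ever appears.

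\textbf{The singular part.} After sub-additivity the leftover is $\sum_{j:\,0<s_j<s_0}(\widetilde{\psi}(s_j)-\theta s_j)\,\ho(\Gamma\cap Q_k^j)$, and since $\widetilde{\psi}(s)-\theta s\to\widetilde{\psi}(0)>0$ as $s\to 0$, this is bounded below by a positive constant times $\ho\bigl(\Gamma\cap\bigcup_{0<s_j<s_0}Q_k^j\bigr)$. If $\mu^s$ has full support on $\Gamma$ (e.g.\ $\mu^s=\sum_i 2^{-i}\delta_{x_i}$ with $\{x_i\}$ dense), then every rectangle has $s_j>0$ and all but finitely many have $s_j$ small, so this error stays bounded away from zero. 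Isolating atoms one-per-rectangle fails with infinitely many atoms, and Lebesgue differentiation only gives pointwise information (for $\ho$-a.e.\ $x$ one gets $s_{j(x)}\to 0$, not $s_{j(x)}=0$), which does not make the sum vanish. The paper's remedy is a separate preliminary step: replace $\mu^s$ by the finite sum $\sum_{i=1}^{N_k}\mu^s(Q_k^i)\,\delta_{x_k^i}$, which leaves $\mathcal{G}$ \emph{exactly} unchanged (the singular term is $\theta$ times the total singular mass), and only then perform the averaging with one Dirac per special square so that each relevant $s_j\to\infty$.
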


\begin{proof} \emph{Step 1.} Given $(\Omega,v,\mu)\in\mathcal{A}(m,M)$, with $\mu=u\ho\llcorner\Gamma+\mu^s$, we would like to approximate $\mu^s$ with a finite number of Dirac deltas. Given $k\in\N$, consider an $1/k$-admissible covering of $\Gamma$. Let $Q^1,\dots, Q^{N_k}$ be those cubes that intersect with $\Gamma$. For each $i=1,\dots,N_k$, let $x^i_k\in Q^i\cap \Gamma$.
	%
	%
	%$(x_i)_{i=1}^{N_k}$, with $x_i\in \Gamma$, for every $i=1,\dots,N_k$. Set $Q_k^i\coloneqq Q(x_i,1/k)$ the square centred at $x_i$ and with side $1/k$. We can also assume that for every $i=1,\dots,N_k$ and for every $k\in\N$ we have $\mu(\partial \bigcup_{i=1}^{N_k} Q_k^i)=0$.
	We define
	\begin{align*}
		m^i_k\coloneqq \mu^s(Q_k^i)
	\end{align*} 
	and set
	\begin{align*}
		\mu_k\coloneqq u\ho\llcorner \Gamma+ \sum_{i=1}^{N_k}m^i_k\delta_{x^i_k},
	\end{align*}
	where, for every $k\in\N$, $N_k$ is finite.
	It is possible to see that $\mu_k(\Gamma)=m$ and $\mu_k\wtom \mu$ as $k\to\infty$. Furthermore, the fact that $\mu^s(\Gamma)=\sum_{i=1}^{N_k}m^i_k$,
	for every $k\in\N$, implies that
	\begin{align*}
		\g(\o,v,\mu_k)=	\g(\o,v,\mu),
	\end{align*}
	for every $k\in\N.$\\

	\emph{Step 2.}
	Now, consider $(\o,v,\mu)\in\mathcal{A}(m,M)$, with $\mu=u\ho\llcorner\Gamma + \sum_{i=1}^{N}m^i\delta_{x_i}$, with $x_i\in \Gamma$ and $m^i> 0$ as defined in step $1$, for every $i=1,\dots N$. 
	We now construct an admissible covering in order to define a suitable density on $\Gamma$.\\
	For $k\in\N$, consider $(Q_k^j)_{j=1}^{L_k}$, an $1/k$-admissible covering for $\Gamma$. Consider the covering of $\Gamma$ given by
	\begin{align}\label{def:reccover}
		\Big(\bigcup_{i=1}^N Q(x^i, 1/k)\Big) \cup \Big[\Big(\bigcup_{j=1}^{L_k} Q_k^j\Big)\setminus	\Big(\bigcup_{i=1}^N Q(x^i, 1/k)\Big)\Big]
	\end{align}
	We notice that $\big(\bigcup_{j=1}^{L_k} Q_k^j\big)\setminus	\big(\bigcup_{i=1}^N Q(x^i, 1/k)\big)$ can be divided $N_k$ rectangles whose sides does not exceed $1/k$. Thus, up to a further subdivision in rectangles, we consider \eqref{def:reccover} as a $1/k$-admissible covering of $\Gamma$. In order to simplify the notation, we denote as $Q_k^j$ any rectangle contained in \eqref{def:reccover}. Furthermore, by reordering the rectangles in \eqref{def:reccover}, we assume that for $j=1,\dots,N$, $Q_k^j\subset\bigcup_{i=1}^N Q(x^i, 1/k)$ and for $j=N+1,\dots,N+N_k$, we have $Q_k^j\subset\big(\bigcup_{j=1}^{L_k} Q_k^j\big)\setminus	\big(\bigcup_{i=1}^N Q(x^i, 1/k)\big)$.\\
	Fix $\varepsilon>0$. Since
	\begin{align*}
		\lim_{k\to\infty}\frac{\mu^s({\widetilde{\Gamma}\cap Q_k^j})}{\ho({\widetilde{\Gamma}\cap Q_k^j})}= +\infty,\quad\text{and}\quad
		\lim_{k\to\infty}
		\frac{\mu^s({\Gamma^c\cap Q_k^j})}{\ho({\Gamma^c\cap Q_k^j})}=+\infty,
	\end{align*}
	for all $j=1,\dots,N$, there is $\bar{k}\in\N$ such that, for every $k\geq \bar{k}$, we have
	\begin{align}\label{eq:prop1.5widepsi}
		\Bigg|\frac{\ho(\Gamma\cap Q_k^j)}{\mu^s(\Gamma\cap Q_k^j)}\widetilde{\psi}\Bigg(\frac{\mu^s(\Gamma\cap Q_k^j)}{\ho(\Gamma\cap Q_k^j)}\Bigg)-\theta\Bigg|<\varepsilon
	\end{align}
	and
	\begin{align}\label{eq:prop1.5psic}
		\Bigg|\frac{\ho(\Gamma\cap Q_k^j)}{\mu^s(\Gamma\cap Q_k^j)}\psi^c\Bigg(\frac{\mu^s(\Gamma\cap Q_k^j)}{\ho(\Gamma\cap Q_k^j)}\Bigg)-\theta\Bigg|<\varepsilon.
	\end{align}
	We now define a density on $\Gamma$. For $\textbf{x}\in Q_k^j$, we define $u_k:\Gamma\to \R$ as
	\begin{align*}
		u_k(\textbf{x})\coloneqq
		\begin{cases}
			\displaystyle\frac{\mu(\widetilde{\Gamma}\cap Q^j_k)}{\ho(\widetilde{\Gamma}\cap Q^j_k)}& \text{ if }  \textbf{x}\in\widetilde{\Gamma},\ \widetilde{\Gamma}\cap Q^j_k \neq\emptyset, \\[15pt]
			\displaystyle	\frac{\mu(\Gamma^c\cap Q^j_k)}{\ho(\Gamma^c\cap Q^j_k)} & \text{ if }  \textbf{x}\in\Gamma^c,\ \Gamma^c\cap Q^j_k \neq\emptyset.
		\end{cases}
	\end{align*}
	Note that the function $u_k\in L^1(\Gamma)$ is $1/k$-grid constant by definition. For each $k\in\N$, define the measure
	\begin{align}\label{mukstep1prop1}
		\mu_k\coloneqq u_k\ho\llcorner\Gamma.
	\end{align}
	By definition, it follows directly that the mass constrained is satisfied, namely that $(\o,v,\mu_k)\in\mathcal{A}(m,M)$.\\
	
	\emph{Step 3.} We now prove that $\mu_k\wtom\mu$ as $k\to\infty$.
	Take $\varphi\in C_c(\R^2)$.
	Fix $\varepsilon>0$. Using the uniform continuity of $\varphi$, there exists $\bar{k}\in\N$ such that for every $k\geq \bar{k}$ we have that
	\[
	|\varphi(\mathbf{x})-\varphi(\mathbf{x}^i_k)|<\varepsilon,
	\]
	for every $\mathbf{x}\in Q_k^j$, where $\mathbf{x}^i_k$ is the intersection point of the diagonals of $Q_k^j$.
	First, we write
	\begin{align}\label{triangstep1}
		\nonumber\Big|\int_\Gamma\varphi\ \text{d}\mu_k&-\int_\Gamma\varphi\ \text{d}\mu\Big|\leq
		\Big|\int_{\widetilde{\Gamma}}\varphi\ \text{d}\mu_k-\int_{\widetilde{\Gamma}}\varphi\ \text{d}\mu\Big|
		+ \Big|\int_{\Gamma^c}\varphi\ \text{d}\mu_k - \int_{\Gamma^c}\varphi\ \text{d}\mu\Big| \\[5pt]
		&\leq \sum_{j=1}^{N+N_k} \Big|\int_{\widetilde{\Gamma}\cap Q_k^j}\varphi\ \text{d}\mu_k
		-\int_{\widetilde{\Gamma}\cap Q_k^j}\varphi\ \text{d}\mu\Big|
		+\sum_{j=1}^{N+N_k} \Big|\int_{\Gamma^c\cap Q_k^j}\varphi\ \text{d}\mu_k - \int_{\Gamma^c\cap Q_k^j}\varphi\ \text{d}\mu\Big|,
	\end{align}
	and we estimate the two terms on the right-hand side of \eqref{triangstep1} separately. We have that
	\begin{align}\label{eq:estimate_limsup_1_1}
		\sum_{j=1}^{N+N_k} \Big|\int_{\widetilde{\Gamma}\cap  Q_k^j}\varphi\ \text{d}\mu_k
		&-\int_{\widetilde{\Gamma}\cap  Q_k^j}\varphi\ \text{d}\mu\Big|
		\leq \sum_{j=1}^{N+N_k} \Big[\int_{\widetilde{\Gamma}\cap  Q_k^j}
		|\varphi(\mathbf{x})-\varphi(\mathbf{x}^i_k)|\ \text{d}\mu_k \nonumber \\[5pt]
		&\hspace{0.5cm}+\int_{\widetilde{\Gamma}\cap  Q_k^j}
		|\varphi(\mathbf{x})-\varphi(\mathbf{x}^i_k)|\ \text{d}\mu
		+|\varphi(\mathbf{x}^i_k)| \big|\mu(\widetilde{\Gamma}\cap  Q_k^j)
		- \mu_k(\widetilde{\Gamma}\cap  Q_k^j) \big| \Big] \nonumber \\[5pt]
		&\leq 2m\varepsilon\|\varphi\|_{\mathcal{C}^0(\R^2)},	
	\end{align}
	where we used the fact that $\mu(\widetilde{\Gamma}\cap  Q_k^j)= \mu_k(\widetilde{\Gamma}\cap  Q_k^j)$ for each $j=1,\dots,M+N_k$ and every $k\in\N$, by definition of $\mu_k$.
	Using similar computations, we also get that the second term on the right-hand side of \eqref{triangstep1} can be estimated as
	\begin{align}\label{eq:estimate_limsup_1_2}
		\sum_{j=1}^{N+N_k} \Big|\int_{\Gamma^c\cap Q_k^j}\varphi\ \text{d}\mu_k-\int_{\Gamma^c\cap Q_k^j}\varphi\ \text{d}\mu\Big| \leq 2m\varepsilon\|\varphi\|_{\mathcal{C}^0(\R^2)},	
	\end{align}
	Finally, from \eqref{triangstep1}, \eqref{eq:estimate_limsup_1_1} and \eqref{eq:estimate_limsup_1_2}, we get
	\begin{align*}
		\Big|\int_\Gamma\varphi\ \text{d}\mu_k-\int_\Gamma\varphi\ \text{d}\mu\Big|
		\leq 4m\varepsilon \|\varphi\|_{\mathcal{C}^0(\R^2)}.
	\end{align*}
	As $\varepsilon>0$ is arbitrary, we can conclude that $\mu_k\wtom\mu$ as $k\to\infty$.\\
	
	\emph{Step 4.} We now prove the convergence of the energy. We will prove that
	\[
	\limsup_{k\to\infty}\g(\o,v,\mu_k) \leq \g(\o,v,\mu).
	\]
	Since the bulk term of the energy is unchanged, we estimate the other contributions. 
	We have that
	\begin{align}\label{eq:limsup_step1_1}
		\nonumber \int_{\widetilde{\Gamma}}&\widetilde{\psi}(u_k)\dho
		+\int_{\Gamma^c}\psi^c(u_k)\dho
		= \sum_{j=1}^{N+N_k} \Bigg[\int_{\widetilde{\Gamma}\cap Q_k^j}\widetilde{\psi}(u_k)\dho
		+\int_{\Gamma^c\cap Q_k^j}\psi^c(u_k)\dho  \Bigg] \\[5pt]
		\nonumber&=\sum_{j=1}^{N+N_k} \Bigg[\ho({\widetilde{\Gamma}\cap Q_k^j})\widetilde{\psi}\Bigg(\frac{\mu(\widetilde{\Gamma}\cap Q_k^j)}{\ho(\widetilde{\Gamma}\cap Q_k^j)}\Bigg)
		+ \ho({\Gamma^c\cap Q_k^j})\psi^c\Bigg(\frac{\mu(\Gamma^c\cap Q_k^j)}{\ho(\Gamma^c\cap Q_k^j)}\Bigg) \Bigg]\\[5pt]
		&=\sum_{j=1}^{N} \Bigg[ \ho({\widetilde{\Gamma}\cap Q_k^j})
		\widetilde{\psi}\Bigg(\med_{\widetilde{\Gamma}\cap Q_k^j}u\dho
		+ \frac{\mu^s({\widetilde{\Gamma}\cap Q_k^j})}{\ho({\widetilde{\Gamma}\cap Q_k^j})} \Bigg) \nonumber \\[5pt]
		&\hspace{1cm}+ \ho({\Gamma^c\cap Q_k^j})
		\psi^c\Bigg(\med_{\Gamma^c\cap Q_k^j}u\dho
		+ \frac{\mu^s(\Gamma^c\cap Q_k^j)}{\ho({\Gamma^c\cap Q_k^j})}\Bigg) \Bigg]\nonumber \\[5pt]
		&\nonumber\hspace{1cm}+\sum_{j=N}^{N+N_k}\Bigg[\ho({\widetilde{\Gamma}\cap Q_k^j})
		\widetilde{\psi}\Big(\med_{\widetilde{\Gamma}\cap Q_k^j}u\dho\Big)+\ho({\Gamma^c\cap Q_k^j})
		\psi^c\Big(\med_{\Gamma^c\cap Q_k^j}u\dho\Big)\Bigg]\\[5pt]
		&\leq\sum_{j=1}^{N} \Bigg[ \ho({\widetilde{\Gamma}\cap Q_k^j})\widetilde{\psi}\Big(\med_{\widetilde{\Gamma}\cap Q_k^j}u\dho\Big)
		+\ho({\widetilde{\Gamma}\cap Q_k^j})\widetilde{\psi}\Bigg(\frac{\mu^s({\widetilde{\Gamma}\cap Q_k^j})}{\ho({\widetilde{\Gamma}\cap Q_k^j})}\Bigg)\nonumber\\[5pt]
		&\hspace{1cm}+\ho(\Gamma^c\cap Q_k^j)\psi^c\Big(\med_{\Gamma^c\cap Q_k^j}u\dho\Big)
		+ \ho(\Gamma^c\cap Q_k^j)\psi^c\Bigg(\frac{\mu^s(\Gamma^c\cap Q_k^j)}{\ho(\Gamma^c\cap Q_k^j)}\Bigg)  \Bigg] \nonumber \\[5pt]
		&\nonumber\hspace{1cm}+\sum_{j=N}^{N+N_k}\Bigg[\ho({\widetilde{\Gamma}\cap Q_k^j})
		\widetilde{\psi}\Big(\med_{\widetilde{\Gamma}\cap Q_k^j}u\dho\Big)+\ho({\Gamma^c\cap Q_k^j})
		\psi^c\Big(\med_{\Gamma^c\cap Q_k^j}u\dho\Big)\Bigg]\\[5pt]
		&\leq\sum_{j=1}^{N} \Bigg[\int_{\widetilde{\Gamma}\cap Q_k^j}
		\widetilde{\psi}(u)\dho 
		+\ho({\widetilde{\Gamma}\cap Q_k^j})\widetilde{\psi} \Bigg(\frac{\mu^s({\widetilde{\Gamma}\cap Q_k^j})}{\ho({\widetilde{\Gamma}\cap Q_k^j})}\Bigg)\nonumber\\[5pt]
		&\hspace{1cm}+\int_{\Gamma^c\cap Q_k^j}\psi^c(u)\dho
		+\ho(\Gamma^c\cap Q_k^j)\psi^c\Bigg(\frac{\mu^s(\Gamma^c\cap Q_k^j)}{\ho(\Gamma^c\cap Q_k^j)}\Bigg) \Bigg]\nonumber \\[5pt]
		&\nonumber\hspace{1cm}+\sum_{j=N}^{N+N_k}\Bigg[
		\int_{\widetilde{\Gamma}\cap Q_k^j}\widetilde{\psi}(u)\dho+
		\int_{	\Gamma^c\cap Q_k^j}\psi^c(u)\dho\Bigg]\\[5pt]
		&=\sum_{j=1}^{N} \Bigg[\int_{\widetilde{\Gamma}\cap Q_k^j}\widetilde{\psi}(u)\dho 
		+ \mu^s({\widetilde{\Gamma}\cap Q_k^j})\frac{\ho({\widetilde{\Gamma}\cap Q_k^j})}{\mu^s({\widetilde{\Gamma}\cap Q_k^j})}\widetilde{\psi}\Bigg(\frac{\mu^s({\widetilde{\Gamma}\cap Q_k^j})}{\ho({\widetilde{\Gamma}\cap Q_k^j})}\Bigg) \nonumber\\[5pt]
		&\nonumber\hspace{1cm} +\int_{\Gamma^c\cap Q_k^j}\psi^c(u)\dho
		+ \mu^s(\Gamma^c\cap Q_k^j)
		\frac{\ho(\Gamma^c\cap Q_k^j)}{\mu^s(\Gamma^c\cap Q_k^j)}	
		\psi^c\Bigg(\frac{\mu^s(\Gamma^c\cap Q_k^j)}{\ho(\Gamma^c\cap Q_k^j)}\Bigg)\Bigg]\\[5pt]
		&\hspace{1cm}+\sum_{j=N}^{N+N_k}\Bigg[
		\int_{\widetilde{\Gamma}\cap Q_k^j}\widetilde{\psi}(u)\dho+
		\int_{	\Gamma^c\cap Q_k^j}\psi^c(u)\dho\Bigg],
	\end{align}
	where in the first inequality we used the sub-additivity of $\widetilde{\psi}$ and $\psi^c$, while in the previous to last step we used Jensen's inequality.
	
	By construction, we have that \eqref{eq:prop1.5widepsi} and \eqref{eq:prop1.5psic} hold. Thus, from \eqref{eq:limsup_step1_1}, we obtain
	\begin{align}\label{eq:step4finale}
		\nonumber\int_{\widetilde{\Gamma}}\widetilde{\psi}(u_k)&\dho
		+\int_{\Gamma^c}\psi^c(u_k)\dho\\[5pt]
		\nonumber&\leq\sum_{j=1}^{N}\Bigg[\int_{\widetilde{\Gamma}\cap Q_k^j}\widetilde{\psi}(u)\dho+\mu^s(\widetilde{\Gamma}\cap Q_k^j)(\theta +\varepsilon)+\int_{	\Gamma^c\cap Q_k^j}\psi^c(u)+\mu^s(\Gamma^c\cap Q_k^j)(\theta+\varepsilon)\Bigg]\\[5pt]
		\nonumber&\hspace{1cm}+\sum_{j=N}^{N+N_k}\Bigg[
		\int_{\widetilde{\Gamma}\cap Q_k^j}\widetilde{\psi}(u)\dho+
		\int_{	\Gamma^c\cap Q_k^j}\psi^c(u)\dho\Bigg] \\[5pt]
		\nonumber&=\sum_{j=1}^{N+N_k}\Bigg[\int_{\widetilde{\Gamma}\cap Q_k^j}\widetilde{\psi}(u)\dho+\int_{	\Gamma^c\cap Q_k^j}\psi^c(u)\dho+\theta\mu^s(\Gamma\cap Q_k^j)+\varepsilon\mu^s(\Gamma\cap Q_k^j)\Bigg]\\[5pt]
		&\leq\int_{\widetilde{\Gamma}}\widetilde{\psi}(u)\dho+\int_{	\Gamma^c}\psi^c(u)\dho+\theta \mu^s(\Gamma)+\varepsilon\mu^s(\Gamma).
	\end{align}
	From \eqref{eq:step4finale}, since $\varepsilon$ is arbitrary, we can conclude
	\[
	\limsup_{k\to\infty} \g(\o,v,\mu_k)\leq\g(\o,v,\mu).
	\]
	This concludes the proof.
\end{proof}

We proceed our analysis with the second step, which will allows us to reduce to the case of a Lipschitz profile and a grid constant adatom density.

\begin{proposition}\label{propfinitenumbercuts}
	Let $(\o,v,\mu)\subset \mathcal{A}(m,M)$ be such that $u$ is grid constant. Then, there exists a sequence $(\o_k,v_k,\mu_k)_k\subset\mathcal{A}_r(m,M)$, where $\mu_k=u_k\ho\llcorner\Gamma_k$ with each $u_k$ grid constant, such that
	\begin{align*}
		\lim_{k\to\infty}\g(\o_k,v_k,\mu_k)=\g(\o,v,\mu),
	\end{align*}
	and $(\o_k,v_k,\mu_k)\to(\o, v,\mu)$, as $k\to\infty$.
\end{proposition}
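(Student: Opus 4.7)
The plan is to construct the approximating sequence in three stages: reduce to the case of finitely many cuts of $h$, build a Lipschitz profile $h_k$ via the Moreau--Yosida transform away from the cuts and a linear interpolation through each cut, and transfer the grid-constant density by splitting its value across each cut so that the resulting energy realizes $\psi^c(u)$ in the limit.

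Since $u$ is grid constant (hence bounded), and since $\widetilde{\psi}$ and $\psi^c$ are locally bounded, the surface energy carried by cuts of height less than $\varepsilon$ is controlled by a constant times $\sum_{h^-(x)-h(x)<\varepsilon}[h^-(x)-h(x)]$, which tends to zero as $\varepsilon\to 0$ because the jump part of $Dh$ is summable. Combined with Lemma \ref{lem:finitely_many_cuts}, a diagonal argument reduces the problem to the case where $h$ has only finitely many cuts, at points $x^1,\dots,x^N$. Refining the admissible grid of $u$ if necessary (via Lemma \ref{lemmanonempty}), I may assume each square $Q^j$ contains at most one cut and satisfies $\mathcal{H}^1(\Gamma_h\cap\partial Q^j)=0$.

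On each subinterval between consecutive cut points, I would use the Moreau--Yosida construction of \cite[Section 5.2]{BonCha02}, adapted to the graph setting by \cite[Lemma 2.7]{FonFusLeoMor07}, to obtain a Lipschitz approximation that agrees with $h^-(x^i)$ at each endpoint and converges uniformly to $h$ on compact subsets away from the cuts, with graph length converging to $\mathcal{H}^1(\widetilde{\Gamma}_h)$ on that subinterval. At each cut $x^i$, inside a horizontal window of width $1/k$, I would linearly interpolate from $h^-(x^i)$ down to $h(x^i)$ and back up to $h^-(x^i)$, producing a sharp $V$-shaped arc whose two branches each have length converging to $\mathcal{H}^1(\Gamma^c_h\cap(\{x^i\}\times\mathbb{R}))$. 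The resulting profile $h_k$ is non-negative and Lipschitz, and $\mathbb{R}^2\setminus\Omega_{h_k}\stackrel{H}{\rightarrow}\mathbb{R}^2\setminus\Omega_h$ by construction and Lemma \ref{lem:Hausd_L1}. To define $u_k$: on every square $Q^j$ not containing a cut, set $u_k\equiv u_j$ on $\Gamma_{h_k}\cap Q^j$; on a square containing the cut $x^i$, pick an optimal decomposition $u_j=r_j+t_j$ realizing $\psi^c(u_j)=\widetilde{\psi}(r_j)+\widetilde{\psi}(t_j)$, assign $u_k\equiv r_j$ to the left branch of the $V$, $u_k\equiv t_j$ to the right branch, and $u_k\equiv u_j$ on the remaining (regular and jump) part of $\Gamma_{h_k}\cap Q^j$. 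By construction $u_k$ is grid constant.

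The bulk energy converges by standard arguments (taking the Moreau--Yosida approximation from above so that $\Omega_h\subset\Omega_{h_k}$, and extending $v$ arbitrarily to a $W^{1,2}$ function with $|\Omega_{h_k}\setminus\Omega_h|\to 0$), so the key step is convergence of the surface energy square by square: on squares without cuts one has $\widetilde{\psi}(u_j)\mathcal{H}^1(\widetilde{\Gamma}_{h_k}\cap Q^j)\to\widetilde{\psi}(u_j)\mathcal{H}^1(\widetilde{\Gamma}_h\cap Q^j)$, while on a square containing the cut $x^i$ the $V$-shaped branches contribute $[\widetilde{\psi}(r_j)+\widetilde{\psi}(t_j)]\mathcal{H}^1(\Gamma^c_h\cap Q^j)=\psi^c(u_j)\mathcal{H}^1(\Gamma^c_h\cap Q^j)$ in the limit, matching the target. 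The main obstacle is enforcing both mass constraints $\mu_k(\Gamma_{h_k})=m$ and $\mathcal{L}^2(\Omega_{h_k}\cap\{x_2\geq 0\})=M$ exactly rather than only asymptotically: the construction above satisfies them up to errors of order $o(1)$. I would correct the area via a small vertical perturbation of $h_k$ on a reference square away from the cuts (an $o(1)$ change that preserves non-negativity and Lipschitzness), and correct the mass via a multiplicative perturbation $u_k\leftarrow(1+\varepsilon_k)u_k$ of size $1+o(1)$ on a reference square (which preserves grid constancy). Since $\widetilde{\psi}$ and $\psi^c$ are continuous on bounded sets, such $o(1)$ corrections do not affect the limit of the energy.
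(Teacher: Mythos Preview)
Your proposal is correct and follows essentially the same approach as the paper: reduction to finitely many cuts, Moreau--Yosida regularization on each subinterval between cuts with a linear $V$-interpolation through each cut, and the optimal splitting $u_j=r_j+t_j$ with $\psi^c(u_j)=\widetilde{\psi}(r_j)+\widetilde{\psi}(t_j)$ assigned to the two branches of the $V$. The only differences are in the constraint bookkeeping---the paper scales $u_k$ on each square by the length ratio $\ho(\widetilde{\Gamma}\cap Q^j)/\ho(\Gamma_k\cap Q^j)$ so that the adatom mass is exact by construction (rather than correcting a posteriori), and fixes the area by a \emph{global} vertical shift of the profile (which then forces the specific displacement extension \eqref{displacementrestriction})---and note that the paper's Moreau--Yosida transform is the inf-convolution, hence approximates $h$ from \emph{below}, not above as you wrote.
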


\begin{proof}
	The strategy of the proof is the following. In \emph{Step 1} we show that it suffices to build the required sequence in case $h$ has finitely many cut points. In \emph{Step 2} we build the recovery sequence. Finally in \emph{Step 3} we show the convergence of the energy.\\
	
	\emph{Step 1.} In this first step we are going to show that it suffices to prove the result in the case $h$ has a finite number of cuts.
	Namely, we prove that there exist sequences $(\Omega_{g_k}, w_k, \nu_k)_k\subset \mathcal{A}(m,M)$ where each $g_k$ has a finite number of cuts, and $\nu_k$ is grid constant, such that
	\[
	\lim_{k\to\infty}\g(\o_{g_k},w_k,\nu_k)=\g(\o,v,\mu),
	\]
	and $(\Omega_{g_k}, w_k, \nu_k)\to (\Omega,v,\mu)$ as $k\to\infty$.
	
	The following construction is inspired by \cite[Theorem 2.8]{FonFusLeoMor07}.
	For $k\in\N$, define (see Figure \ref{fig:limsup_approx})
	\begin{align*}
		\hat{g}_k(x)\coloneqq\min\{\max\{h^-(x)-1/k,0\},h(x)\},
	\end{align*}
	for every $x\in(a,b)$. It is possible to see that, for each $k$, the function $	\hat{g}_k$ is lower semicontinuous, of bounded variation, and such that $\hat{g}_k\leq h$.
	Moreover, thanks to Lemma \ref{lem:finitely_many_cuts}, we have that $\hat{g}_k$ has finitely many cuts.
	We then define
	\begin{align}\label{trickmassconstraint}
		g_k(x) \coloneqq \hat{g}_k(x)+\varepsilon_k,
	\end{align}
	for each $k$, where
	\begin{align*}
		\varepsilon_k\coloneqq\frac{1}{b-a}\Big(M-\int_a^b	\hat{g}_k(x)\ \text{d}x\Big)>0.
	\end{align*}
	Set $\Gamma_k\coloneqq \Gamma_{g_k}$, and note that
	\begin{equation}\label{eq:ho_gk}
		\lim_{k\to\infty}\ho(\Gamma_k) = \ho(\Gamma).
	\end{equation}
	We now need, for each $k\in\N$, to define the displacement $v_k$ and the adatom density $u_k$.
	For the former, by fixing a $y_0<0$ such that $v(\cdot,y_0)\in W^{1,2}\big((a,b);\R^2\big)$, we define
	\begin{align}\label{displacementrestriction}
		w_k(\textbf{x})\coloneqq \begin{cases}
			\displaystyle	v(x,y-\varepsilon_k)\quad&\text{if}\ y>y_0+\varepsilon_k,\\[5pt] 
			v(x,y_0)\quad&\text{if}\ y_0<y\leq y_0+\varepsilon_k,\\[5pt]
			v(x,y)\quad&\text{if}\ y\leq y_0.
		\end{cases}
	\end{align}

For $k\in\N\setminus\{0\}$, and $\mathbf{x}\in\Gamma_k$, we define
	\begin{align*}
		z_k(\mathbf{x})\coloneqq\begin{cases}
			\displaystyle	u\big(x,y+1/k\big)&\quad \text{if}\ (x,y+1/k)\in\widetilde{\Gamma},\ \text{and}\ h(x)>1/k, \\[10pt]
			%	z_k'&\quad\text{if}\ h^-(x)=h(x),\  h^-(x)=h^+(x)\ \text{and}\ h(x)\leq\frac{1}{k}, \\[10pt]
			u(x,y)&\quad \text{if}\ \textbf{x}\in\Gamma^c,\\[10pt]
			u(x,0)&\quad \text{if}\ h(x)=0.
		\end{cases}
	\end{align*}
\blue{
	For each $k\in\N\setminus\{0\}$, we then define the measure
\[
\nu_k\coloneqq (z_k+r_k)\ho\llcorner \Gamma_k,
\]
where
	\begin{align*}
		r_k\coloneqq \frac{1}{\ho(\Gamma_k)}\left[\int_{\Gamma}u\dho -\int_{\Gamma_k}z_k\dho\right].
	\end{align*}}
	We notice that, by using \eqref{eq:ho_gk},
	\begin{equation}\label{eq:rk}
		\lim_{k\to\infty}r_k=0.
	\end{equation}
	
	\begin{figure}
		\includegraphics[scale=1]{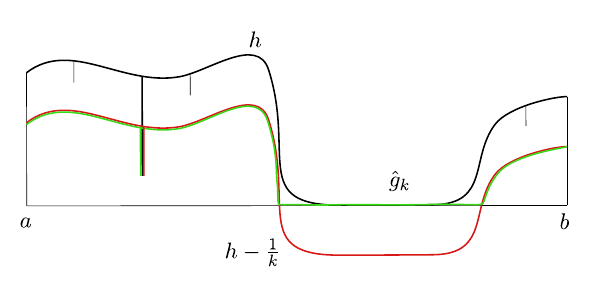}
		\caption{In order to reduce to a finite number of cuts, we do the following: first, we shift down by $1/k$ the regular part of the graph of $h$ (not the cuts), getting the red graph. In this process, some parts of the graph might have gone below zero. Thus, we get the function $\hat{g}_k$ by cutting them, and by adding the remaining part of the original cuts.}
		\label{fig:limsup_approx}
	\end{figure}

	\emph{Step 1.1} \blue{Note that, by definition, the sequences $(g_k)_k$ and $(\nu_k)_k$ satisfy the mass and the density constraint as in Theorem \ref{thm:main_mass}, and thus $(\Omega_{g_k}, w_k, \nu_k)_k\subset \mathcal{A}(m,M)$.}
	
	\emph{Step 1.2} We now prove that $(\Omega_{g_k}, w_k, \nu_k)\to (\Omega,v,\mu)$ as $k\to\infty$.
	By using the definition, it is possible to see that $\R^2\setminus\Omega_{g_k}\stackrel{H}{\rightarrow}\R^2\setminus\Omega$, and $w_k\wto v$ in $W^{1,2}_{\text{loc}}(\Omega;\R^2)$ as $k\to\infty$. In particular, we have that $\ho(\Gamma_k)\to \ho(\Gamma)$ as $k\to\infty$.\\
	We now prove that $\nu_k\wtom \mu$ as $k\to\infty$. Take any $\varphi\in   C_c(\R^2)$ and fix $\varepsilon>0$. By the uniform continuity of $\varphi$ we find $\delta>0$ such that, if $|(x,y-1/k)-(x,y)|<\delta$, we have
	\begin{align*}
		|\varphi(x,y-1/k)-\varphi(x,y)|<\varepsilon.
	\end{align*}
	Then, for $k$ large enough,
	\begin{align*}
		\Big|\int_{\Gamma_k}\varphi &z_k\dho+\int_{	\Gamma_k}r_k\varphi\dho-\int_{\Gamma} \varphi u \dho\Big|\\[5pt]
		&\leq\Big|\int_{\widetilde{\Gamma}\cap \{h>1/k\}}\varphi(x,y-1/k) u\dho+\int_{\widetilde{\Gamma}\cap \{h=0\}}\varphi u(x,0) \dho-\int_{\widetilde{\Gamma}} \varphi u \dho\Big|\\[5pt]
		&\hspace{1cm}+\Big|\int_{\Gamma_k^c}\varphi u\dho-\int_{\Gamma^c} \varphi u \dho\Big|+||\varphi||_{\mathcal{C}^0(\R^2)}\ho(	\Gamma_k)r_k \\[5pt]
		&\leq\varepsilon||u||_{L^1(\widetilde{\Gamma})}+\Big|\int_{\widetilde{\Gamma}\cap \{h>1/k\}}\varphi u\dho-\int_{\widetilde{\Gamma}\cap \{h>0\}}\varphi u\dho\Big|\\[5pt]
		&\hspace{1cm}+\Big|\int_{\Gamma^c\setminus \Gamma^c_k}\varphi u\dho\Big|+||\varphi||_{\mathcal{C}^0(\R^2)}\ho(	\Gamma_k)r_k .
	\end{align*}
	Here we notice that $\Gamma^c\setminus \Gamma_k^c\to\emptyset$, $r_k\to0$ and that $\widetilde{\Gamma}\cap\{h>1/k\}\to\widetilde{\Gamma}\cap\{h>0\}$ as $k\to\infty$. From these considerations, as $\varepsilon$ is arbitrary, we infer that $\nu_k\wtom\mu$ as $k\to\infty$.\\
	
	\emph{Step 1.3} Finally, we prove the convergence of the energy. First, by a standard argument, we can reduce to the case $u\in L^\infty(\Gamma)$.
	Thus, we have
	\begin{align}\label{step1energyest}
		\nonumber|\g(\Omega_k,w_k,\nu_k)-\g(\Omega,v,\mu)|&\leq \Big|\int_{\o_k}W\big(E(w_k)-E_0(y)\big)\ \text{d}\textbf{x}-\int_{\o}W\big(E(v)-E_0(y)\big)\ \text{d}\textbf{x}\Big| \\[5pt]
		\nonumber&\hspace{1cm}+\Big|\int_{\widetilde{\Gamma}_k}\widetilde{\psi}(z_k+r_k)\dho-\int_{\widetilde{\Gamma}}\widetilde{\psi}(u)\dho\Big| \\[5pt]
		&\hspace{1cm}+\Big|\int_{\Gamma_k^c}\psi^c(z_k+r_k)\dho-\int_{\Gamma^c}\psi^c(u)\dho\Big|.
	\end{align}
	Regarding the bulk term on the right-hand side of \eqref{step1energyest}, since $w_k\rightharpoonup v$ in $W^{1,2}_{\text{loc}}(\Omega;\R^2)$ as $k\to\infty$, we have that
	\begin{align*}
		\lim_{k\to\infty}E(w_k)= E(v).
	\end{align*}
	Remember that, by construction, $\o_k\subset \o$. From the fact that $\o_k\to\o$ in $L^1$ as $k\to\infty$, we can find $\bar{k}\in\N$ such that for every $k\geq \bar{k}$, we have $|\o\setminus\o_k|<\varepsilon$. Then, for $k\geq\bar{k}$, we have
	\begin{align}\label{step1bulkest}
		\nonumber
		\Big|\int_{\o_k}W\big(E(w_k)-E_0(y)\big)\ \text{d}\textbf{x}&-\int_\o W\big(E(v)-E_0(y)\big)\ \text{d}\textbf{x}\Big| \\[5pt]
		\nonumber\hspace{1cm}&\leq\int_{\o_k}\big| W\big(E(w_k)-E_0(y)\big)-W\big(E(v)-E_0(y)\big)\big|\ \text{d}\textbf{x}\\[5pt]
		&\hspace{1cm}+\Big|\int_{\o\setminus\o_k}W\big(E(v)-E_0(y)\big)\ \text{d}\textbf{x}\Big|.
	\end{align}
	Notice that the first term on the right-hand side of \eqref{step1bulkest} is zero, whereas, by Dominated Convergence Theorem, we can conclude that the second term is going to zero as $k\to\infty$. \\
	
	We now consider the surface terms on the right-hand side of \eqref{step1energyest}. From \eqref{eq:rk}, we can choose $k$ large enough so that $r_k\leq 1$. Since $u\in L^\infty(\Gamma)$, we have that $\widetilde{\psi}$ and $\psi^c$ are uniformly continuous in $[0,\|u\|_{L^\infty}+1]$. Then, for every $\varepsilon>0$, there is $\bar{k}\in\N$ such that, for every $k\geq\bar{k}$,
	\begin{align}\label{ulinfunifcont}
		|\widetilde{\psi}(u+r_k)-\widetilde{\psi}(u)|<\varepsilon\quad\text{and}\quad|\psi^c(u+r_k)-\psi^c(u)|<\varepsilon.
	\end{align}
	For the first term, we get
	\begin{align}\label{step2energyregular}
		\nonumber\Big|\int_{\widetilde{\Gamma}_k}\widetilde{\psi}(z_k+r_k)&\dho-\int_{\widetilde{\Gamma}}\widetilde{\psi}(u)\dho\Big|\\[5pt]
		&=\Big|\int_{\widetilde{\Gamma}}\big[\widetilde{\psi}(u+r_k)-\widetilde{\psi}(u)\big]\dho\Big|+\Big|\int_{\widetilde{\Gamma}\cap\{0<h<1/k\}}\widetilde{\psi}(z_k+r_k)\dho \Big|.
	\end{align}
	Now we use \eqref{ulinfunifcont}, together with
	\[
	\widetilde{\Gamma}\cap\{0<h<1/k\}\to\emptyset,
	\] 
	and we conclude the convergence to $0$ of the surface term in \eqref{step2energyregular}, as $k\to\infty$.
	Regarding the second surface term on the right-hand side of \eqref{step1energyest}, we have that
	\begin{align}\label{step2energycut}
		\nonumber\Big| \int_{\Gamma_k^c}\psi^c(u+&r_k)\dho-\int_{\Gamma^c}\psi^c(u)\dho\Big|
		\leq \Big|\int_{	\Gamma^c}\big[\psi^c(u+r_k)-\psi^c(u)\big]\dho\Big|\\[5pt]
		&\hspace{1cm}+\Big|\int_{	\Gamma^c\cap \{h^-(x)-1/k<y<h^-(x)\}}\psi^c(u)\dho\Big|
	\end{align}
	From \eqref{ulinfunifcont} and since
	\[
	\Gamma^c\cap \{h^-(x)-1/k<y<h^-(x)\}\to\emptyset
	\]
	for $k\to\infty$, we conclude our estimate on the cut part.\\
	
	By putting together \eqref{step1bulkest}, \eqref{step2energyregular} and  \eqref{step2energycut} in \eqref{step1energyest}, we  get that
	\begin{align*}
		\lim_{k\to\infty} \g(\Omega_k,w_k,\nu_k)=\g(\Omega,v,\mu).
	\end{align*}
	\vspace*{0.4cm}
	
	\emph{Step $2.$} Now, consider $h\in$ $\bv(a,b)$ with a finite number of cuts. Let $(c^i)_{i=1}^n\subset (a,b)$ be the orthogonal projection on the $x$-axes of the cuts. Set
	\begin{align}\label{step2epsilon}
		\varepsilon_0\coloneqq \min\{|c^i-c^j|:\quad i\neq j=1,\dots,n\}.
	\end{align}
	In order to lighten the notation, and since we are considering a function $h$ which has a finite number of cut points, we can work as $h$ had a single cut and then repeating the following construction for the general case. So let $c$ be the cut point of $h$.
	
	The idea of the construction is to use a Yosida-Moreau transform far from the cut point $a<c<b$ and, around the cut, we use an interpolation in $[c-\varepsilon_0/k,c+\varepsilon_0/k]$ in order to get the Hausdorff convergence to the vertical cut.
	We need to apply \blue{the Yosida-Moreau transform of $h$ with maximal slope $k$} beforehand because we need the mass constraint to be satisfied, as we want to use the same procedure as in \eqref{trickmassconstraint}, which requires a sequence that lies below $h$.\blue{ Moreover, since we use \blue{the Yosida-Moreau transform of $h$ with maximal slope $k$} far form the cut point, thanks to \cite[Lemma 2.7]{FonFusLeoMor07}, we have the Hausdorff convergence to our configuration as well as the convergence of the length of the graph.}
	
	We define, for each $k\in \N$,  $h^\ell_k:(a,c)\mapsto [0,\infty)$ as \blue{the Yosida-Moreau transform of $h$ with maximal slope $k$} on $(a,c)$ and $h^r_k:(c,b)\mapsto[0,\infty)$ as the Yosida transform of $h$ on $(c,b)$. Namely
	\begin{align*}
		h^\ell_k(x)&\coloneqq \inf\{h(z)+k|x-z|:z\in(a,c)\}, \\[5pt]
		h^r_k(x)&\coloneqq \inf\{h(z)+k|x-z|:z\in(c,b)\}. 
	\end{align*}
	We have that both $h^\ell_k$ and $h^r_k$ are $k$-Lipschitz functions such that $h^\ell_k\leq h$ and $h^r_k\leq h$. Furthermore, by \cite[Lemma 2.7]{FonFusLeoMor07} we have that $\o_{h^\ell_k}\to \o\cap\big[(a,c)\times\R\big]$ and $\o_{h^r_k}\to \o\cap\big[(c,b)\times\R\big]$ as $k\to\infty$, together with their convergence of the length of their respective graph, namely
	\begin{align*}
		\ho(\Gamma_{h^\ell_k})&\to\ho\big(\Gamma\cap((a,c)\times\R)\big), \\[5pt]
		\ho(\Gamma_{h^r_k})&\to\ho\big(\Gamma\cap((c,b)\times\R)\big),
	\end{align*}
	as $k\to\infty$. We can also extend by continuity $h^\ell_k$ and $h^r_k$ at $c$, as we have both right and left limit of $h$ at $c$. We are going to use the following notation
	\begin{align*}
		S_k&\coloneqq \Big[c-\frac{\varepsilon_0}{k},c+\frac{\varepsilon_0}{k}\Big]\times\R,\\[10pt]
		S_k^\ell&\coloneqq \Big[c-\frac{\varepsilon_0}{k},c\Big]\times\R,\\[10pt]
		S_k^r&\coloneqq \Big[c,c+\frac{\varepsilon_0}{k}\Big]\times\R,
	\end{align*}
	where $\varepsilon_0$ is defined in \eqref{step2epsilon}.
	The definition of our sequence $(h_k)_k$ uses the definition of $h^\ell_k$ and $h^r_k$  outside $S_k$ whereas in $S_k$ we have a linear interpolation from the cut point $(c,h(c))$ and the points $(c-\varepsilon_0/k,h^\ell_k(c-\varepsilon_0/k)$ and $( c+\varepsilon_0/k,h^r_k(c+\varepsilon_0/k))$. We define our Lipschitz sequence as
	\begin{align*}
		\hat{h}_k(x)\coloneqq\begin{cases}
			\displaystyle h^\ell_k(x)\quad&x\in	(a,c-\varepsilon_0/k),\\[10pt]
			\displaystyle m_k^\ell x+q_k^\ell\quad&x\in S_k^\ell,\\[10pt]
			\displaystyle m_k^r x+q_k^r\quad&x\in S_k^r, \\[10pt] h^r_k(x)\quad&x\in	(c+\varepsilon_0/k,b),
		\end{cases}
	\end{align*}
	with suitable coefficients $m^\ell_k,q^\ell_k,m^r_k,q^r_k\in\R$ such that we have linear interpolation from $\big(c-\varepsilon_0/k,h_k^\ell(c-\varepsilon_0/k)\big)$ and $\big(c+\varepsilon_0/k,h_k^r(c+\varepsilon_0/k)\big)$ to the point $(c,h(c))$.  Notice that, by definition, $\hat{h}_k(c)=\hat{h}(c)$ and $h_k$ is continuous. Moreover, thanks to Theorem \ref{thm:properties_PBV}, for $k$ large enough, it holds that $\hat{h}_k\leq h$. Now, following the same path as in \eqref{trickmassconstraint}, we set
	\begin{align*}
		h_k(x)\coloneqq \hat{h}_k(x)+\varepsilon_k,
	\end{align*}
	where
	\begin{align*}
		\varepsilon_k\coloneqq\frac{1}{b-a}\Big(M-\int_a^b	\hat{h}_k(x)\ \text{d}x\Big).
	\end{align*}
	We then have that the sequence $(h_k)_k$ satisfies the mass constraint, namely,
	\begin{align*}
		\int_a^bh_k(x)\ \text{d}x=M.
	\end{align*}
	
	\emph{Step $2.1.$} For every  $k\in\N$, let $\o_k$ be the sub-graph of $h_k$. We prove that $\R^2\setminus\Omega_k\stackrel{H}{\to}\R^2\setminus\Omega$ as $k\to\infty$. We use again the equivalence of the Hausdorff convergence with the Kuratowski convergence (see Proposition \ref{kuratowskyconvergence}).
	Take $\bar{\textbf{x}}=(\bar{x},\bar{y})\in\R^2\setminus\Omega$. We first want to prove that there exists a sequence $(x_k,y_k)_k\subset \R^2\setminus\o_k$ such that $(x_k,y_k)\to\bar{x}$. Then, we have different cases depending on whether $\bar{\textbf{x}}\in S_k$ or not.
	In case $\bar{\textbf{x}}\notin S_k$, as the sequence $(h_k)_k$ is defined as the Yosida-Moreau transform of $h$, away from the cut point we can use Lemma $2.7$ of \cite{FonFusLeoMor07} and we have already the Hausdorff convergence desired.
	
	Next we deal the case in which $\bar{\textbf{x}}\in S_k$. If $\bar{x}=c$ and $\bar{y}\leq h^-(c)$, consider the sequence 
	\begin{align*}
		(x_k,y_k)\coloneqq\Bigg(\frac{\bar{y}-q^\ell_k}{m^\ell_k},\bar{y}\Bigg),
	\end{align*}
	for every $k\in\N$. We obtain $(x_k,y_k)\to(c,\bar{y})$ as $k\to\infty$.
	
	In case $\bar{x}=c$ and $\bar{y}\geq h^-(c)$ or in case $\bar{x}\neq c$, it is enough to consider the constant sequence $(x_k,y_k)\coloneqq(c,\bar{y})$, since by definition $h_k\leq h$ and thus we have that $(\bar{x},\bar{y})\in\R^2\setminus\o_k$,  for every $k\in\N$. \\
	
	We are left to check the second condition of the Kuratowski convergence. Take a sequence $(x_k,y_k)_k\subset\R^2\setminus (\Omega_k\cap S_k)$ and suppose that $(x_k,y_k)\to(x,y)$ as $k\to\infty$. We need to prove that $(x,y)\in\R^2\setminus\Omega$. Since $(x_k,y_k)\in S_k$ and the vertical strip $S_k$ is shrinking to the vertical line $c\times\R$, then we must have that $x=c$ thus the point $(x,y)\in\R^2\setminus\Omega$.
	
	In case our sequence $(x_k,y_k)_k$ is laying both in $\R^2\setminus(\o_k\cap S_k)$ and in $\R^2\setminus(\o_k\setminus S_k)$, as it is converging, it is enough consider $k$ large enough and we get that $(x_k,y_k)$ is only in one of the two sets. Then we can proceed as before.
	
	Thus, we can conclude that $\R^2\setminus\Omega_k\stackrel{H}{\rightarrow}\R^2\setminus\Omega$ as $k\to\infty$.  \\
	
	\emph{Step $2.2.$} We are going to define a density on $\Gamma_k$. Since $u$ is grid constant we can consider a family of squares $(Q^j)_{j\in J}$, with $J=\{1,\dots N\}$, such that on each square $Q^j$ we have
	\begin{align*}
		u_{|Q^j\cap \Gamma}=u^j\in\R.
	\end{align*}
	We now define two index sets
	\begin{align}\label{indexakbk}
		A_k\coloneqq\big\{j\in J:Q^j\cap S_k=\emptyset\big\},\quad\quad
		B_k\coloneqq J \setminus A_k.
	\end{align}
	In order to define what follows, we recall Lemma \ref{lemmanonempty}. The density is then defined as $u_k:\Gamma_k\to\R$ 
	\begin{align*}
		u_k(\textbf{x})\coloneqq\begin{cases}
			\displaystyle u^j\frac{\ho(\widetilde{\Gamma}\cap Q^j)}{\ho(\Gamma_k\cap Q^j)}\quad &\textbf{x}\in \Gamma_k\cap Q^j,\ j\in A_k, \\[15pt]
			\displaystyle a^j\frac{\ho(\Gamma^c\cap Q^j)}{\ho(\Gamma_k\cap Q^j\cap S_k^\ell)}\quad &\textbf{x}\in \Gamma_k\cap Q^j\cap	S_k^\ell,\ j\in B_k, \\[15pt]
			\displaystyle b^j\frac{\ho(\Gamma^c\cap Q^j)}{\ho(\Gamma_k\cap Q^j\cap S_k^r)}\quad &\textbf{x}\in \Gamma_k\cap Q^j\cap	S_k^r,\ j\in B_k,\\[15pt]
			\displaystyle
			u^j\frac{\ho\big((\widetilde{\Gamma}\cap Q^j)\setminus S_k\big)}{\ho\big((\Gamma_k\cap Q^j)\setminus S_k\big)}\quad &\textbf{x}\in (\Gamma_k\cap Q^j)\setminus S_k,\ j\in B_k,
		\end{cases}
	\end{align*}
	where $a^j$, $b^j$ are such that
	\begin{align}\label{step2ajbjuj}
		a^j+b^j=u^j
	\end{align}
	and
	\begin{align}\label{psiabu}
		\psi^c(u^j)=\widetilde{\psi}(a^j)+\widetilde{\psi}(b^j).
	\end{align}
	As the size of the squares is fixed, we take $k$ large enough such that the vertical strip $S_k$ is contained in a single vertical column of squares.
	
	For each $k\in\N$, define the measure $\mu_k\coloneqq u_k\ho\llcorner\Gamma_k$. We have that $\mu_k$ satisfies the density constraint. Indeed,
	\begin{align*}
		\int_{\Gamma_k}u_k&\dho=\sum_{j\in A_k}\int_{\Gamma_k\cap Q^j}u^j\frac{\ho(\widetilde{\Gamma}\cap Q^j)}{\ho(\Gamma_k\cap Q^j)}\dho \\[5pt]
		&\hspace{0.5cm}+ \sum_{j\in B_k}\Big(\int_{\Gamma_k\cap Q^j\cap S_k^\ell} a^j\frac{\ho(\Gamma^c\cap Q^j)}{\ho(\Gamma_k\cap Q^j\cap S_k^\ell)}\dho+\int_{\Gamma_k\cap Q^j\cap S_k^r}b^j\frac{\ho(\Gamma^c\cap Q^j)}{\ho(\Gamma_k\cap Q^j\cap S_k^r)}\dho\\[5pt]
		&\hspace{0.5cm}+\int_{(\Gamma_k\cap Q^j)\setminus S_k}u_j\frac{\ho\big((\widetilde{\Gamma}\cap Q^j)\setminus S_k\big)}{\ho\big((\Gamma_k\cap Q^j)\setminus S_k\big)}\dho\Big)\\[5pt]
		&=\sum_{j\in A_k}u^j\ho(\widetilde{\Gamma}\cap Q^j)+\sum_{j\in B_k}\Big(a_j\ho(\Gamma^c\cap Q^j)+b_j\ho(\Gamma^c\cap Q^j)\\[5pt]
		&\hspace{0.5cm}+u^j\ho\big((\widetilde{\Gamma}\cap Q^j)\setminus S_k\big)\Big)\\[5pt]
		&=\sum_{j=1}^N\int_{\Gamma\cap Q^j}u^j\dho=m,
	\end{align*}
	where in the previous to last step we used \eqref{step2ajbjuj}.\\

	\emph{Step $2.3.$} We prove that $\mu_k\wtom \mu$. Take any $\varphi\in\MC_c(\R^2)$. For every $\varepsilon>0$, we can find $\bar{k}\in \N$ such that for every $k\geq \bar{k}$ we have $|\varphi(\textbf{x})-\varphi(\textbf{x}^j)|\leq\varepsilon$ for all $\textbf{x}\in Q^j$, where $\textbf{x}^j$ denotes the center of the square $Q^j$.\
	From Lemma \ref{lemmanonempty} we have
	\begin{align}\label{densitysk}
		\nonumber\Big|\int_{\Gamma_k}\varphi u_k \dho-\int_{\Gamma}\varphi u\dho\Big|&\leq \sum_{A_k} \Big|\int_{\Gamma_k\cap Q^j}\varphi u_k \dho-\int_{\widetilde{\Gamma}\cap Q^j}\varphi u^j\dho\Big| \\[5pt]
		\nonumber&\hspace{1cm}+\sum_{B_k}\Big(\Big|\int_{\Gamma_k\cap Q^j\cap S_k}\varphi u_k \dho-\int_{\Gamma\cap Q^j\cap S_k}\varphi u^j\dho\Big|\\[5pt]
		&\hspace{1cm}+\Big|\int_{(\Gamma_k\cap Q^j)\setminus S_k}\varphi u_k\dho-\int_{(\widetilde{\Gamma}\cap Q^j)\setminus S_k} \varphi u^j\dho\Big|\Big).
	\end{align}
	We now compute first the sum over the indexes in $A_k$ on the right-hand side of \eqref{densitysk}. By summing and subtracting $\varphi(\textbf{x}^j)$ inside each of the integral, it holds that
	\begin{align}\label{ikestimate}
		\nonumber\sum_{j\in A_k} \Big|\int_{\Gamma_k\cap Q^j}\varphi u^j\frac{\ho(\widetilde{\Gamma}\cap Q^j)}{\ho(\Gamma_k\cap Q^j)} \dho-\int_{\widetilde{\Gamma}\cap Q^j}\varphi u^j\dho\Big|&\leq 2\sum_{j\in A_k} \big|\varphi(\textbf{x})-\varphi(\textbf{x}^j)\big| |u^j| \ho(\widetilde{\Gamma}\cap Q^j)\\[5pt]
		\nonumber&\leq2\varepsilon\sum_{j\in A_k}\ho(\widetilde{\Gamma}\cap Q^j)|u^j|\\[5pt]
		&\leq 2\varepsilon ||u||_{L^1(\widetilde{\Gamma})}.
	\end{align}
	We now estimate the sum over $B_k$ on the right-hand side of \eqref{densitysk}. Note that, up taking a larger $\bar{k}\in\N$, we can assume that
	\[
	\sum_{j\in B_j}|\ho(\Gamma\cap Q^j\cap S_k)-\ho(\Gamma\cap Q^j)|\leq 4\varepsilon,
	\]
	for all $k\geq\bar{k}$.
	Bearing in mind that for every $j\in\N$ it holds $a^j+b^j=u^j$, we get
	\begin{align}\label{jkestimate}
		\nonumber\sum_{j\in B_k}\Big|\int_{\Gamma_k\cap Q^j\cap S_k}&\varphi u_k \dho-\int_{\Gamma\cap Q^j\cap S_k}\varphi u^j\dho\Big|\\[5pt]
		\nonumber&=\sum_{j\in B_k}\Big|\int_{\Gamma_k\cap Q^j\cap S_k^\ell}\varphi a^j\frac{\ho(\Gamma^c\cap Q^j)}{\ho(\Gamma_k\cap Q^j\cap S_k^\ell)}\dho\\[5pt]
		\nonumber&\hspace{1cm}+\int_{\Gamma_k\cap Q^j\cap S_k^r}\varphi b^j\frac{\ho(\Gamma^c\cap Q^j)}{\ho(\Gamma_k\cap Q^j\cap S_k^r)}\dho\\[5pt]
		\nonumber&\hspace{1cm}-\int_{\Gamma\cap Q^j\cap S_k}\varphi u^j\dho\Big|\\[5pt]
		&\leq\nonumber 2\varepsilon\sum_{j\in B_k} u^j\big(\ho(\Gamma^c\cap Q^j)+\ho(\Gamma\cap Q^j\cap S_k)\big)\\[5pt]
		\nonumber&\hspace{1cm}+|\varphi(\textbf{x}^j)| u^j \Big| \ho(\Gamma^c\cap Q^j)-\ho(\Gamma\cap Q^j\cap S_k)\big|\\[5pt]
		&\leq 2\varepsilon(2||u||_{L^\infty(\Gamma)} +4\varepsilon)+4\varepsilon\|\varphi\|_{\mathcal{C}^0(\R^2)}\|u\|_{L^\infty(\Gamma)}.
	\end{align}
	
	In the same way, we can obtain the estimate for last two terms of the sum over $B_k$ on the right-hand side of \eqref{densitysk},
	\begin{align}\label{whatremeains}
		\sum_{j\in B_k}\Big|\int_{(\Gamma_k\cap Q^j)\setminus S_k}\varphi u^j\dho-\int_{(\widetilde{\Gamma}\cap Q^j)\setminus S_k} \varphi u^j\dho\Big|\leq C\varepsilon ||u||_{L^1(\widetilde{\Gamma})},
	\end{align}
	for some constant $C>0$.
	In conclusion, if we put together \eqref{densitysk}, \eqref{ikestimate}, \eqref{jkestimate},  \eqref{whatremeains}, we obtain that
	\begin{align*}
		\Big|\int_{\Gamma_k}\varphi u_k \dho-\int_{\Gamma}\varphi u\dho\Big|< C'\varepsilon,
	\end{align*} 
	with $C'>0$.
	Since $\varepsilon$ is arbitrary, we get that $\mu_k\wtom\mu$ as $k\to\infty$. \\
	
	\emph{Step $2.5.$}
Arguing as in \eqref{displacementrestriction}, we can define the displacement sequence $(v_k)_k$, \blue{with $v_k\in W^{1,2}(\Omega_k;\R^2)$ }such that $v_k\wto v$ in $W^{1,2}_{\text{loc}}(\Omega;\R^2)$ as $k\to\infty$. \\
	
	\emph{Step $2.6.$} It remains to prove the convergence of the energy. By using the index sets in \eqref{indexakbk}, we have that
	\begin{align}\label{jkestimateenergy}
		\nonumber
		|\g(\Omega_k,v_k,\mu_k)-\g(\Omega,v,\mu)|&\leq \Big|\int_{\o_k}W\big(E(v_k)-E_0(y)\big)\ \text{d}\textbf{x}-\int_\o W\big(E(v)-E_0(y)\big)\ \text{d}\textbf{x}\Big|\\[5pt]
		\nonumber&\hspace{1cm}+\sum_{j\in A_k} \Big|\int_{\Gamma_k\cap Q^j}\widetilde{\psi}(u^j)\dho-\int_{\widetilde{\Gamma}\cap Q^j} \widetilde{\psi}(u^j)\dho\Big| \\[5pt]
		\nonumber&\hspace{1cm}+\sum_{j\in B_k} \Big|\int_{\Gamma_k\cap Q^j\cap S_k}\widetilde{\psi}(u^j)\dho-\int_{\Gamma^c\cap Q_k^j}\psi^c(u^j)\dho\Big|\\[5pt]
		&\hspace{1cm}+\sum_{j\in B_k}\Big|\int_{(\Gamma_k\cap Q^j)\setminus S_k}\widetilde{\psi}(u_j)\dho-\int_{(\widetilde{\Gamma}\cap Q^j)\setminus S_k} \widetilde{\psi}(u^j)\dho\Big|.
	\end{align}
	We will estimate the four terms on the right-hand side of \eqref{jkestimateenergy} separately. For the bulk term, we can use the same method as in \eqref{step1bulkest} and we conclude that
	\begin{equation}\label{step2bulkest}
		\begin{aligned}
			\Big|\int_{\o_k}W\big(E(v_k)-E_0(y)\big)\ \text{d}\textbf{x}-\int_\o W\big(E(v)-E_0(y)\big)\ \text{d}\textbf{x}\Big|
			\to 0,
		\end{aligned}
	\end{equation}
	as $k\to\infty$.\\
	We now consider the first sum on the right hand side of \eqref{jkestimateenergy}.We have that
	\begin{align}\label{ikenergyestimatereg}
		\nonumber&\sum_{j\in A_k} \Big|\int_{\Gamma_k\cap Q^j}\widetilde{\psi}\Bigg(u^j\frac{\ho(\widetilde{\Gamma}\cap Q^j)}{\ho(\Gamma_k\cap Q^j)}\Bigg)\dho-\int_{\widetilde{\Gamma}\cap Q^j} \widetilde{\psi}(u^j)\dho\Big|\\[5pt]
		\nonumber&\hspace{1cm}\leq\sum_{j\in A_k}\Big|\widetilde{\psi}\Bigg(u^j\frac{\ho(\widetilde{\Gamma}\cap Q^j)}{\ho(\Gamma_k\cap Q^j)}\Bigg)\ho(\Gamma_k\cap Q^j)-\widetilde{\psi}(u^j)\ho(\Gamma_k\cap Q^j)\Big|\\[5pt]
		&\hspace{2cm}+\sum_{j\in A_k}\Big|\widetilde{\psi}(u^j)\ho(\Gamma_k\cap Q^j)-\widetilde{\psi}(u^j)\ho(\widetilde{\Gamma}\cap Q^j)\Big|.
	\end{align}
	From the fact that $\widetilde{\psi}$ is continuous and since
	$\ho(\Gamma_k\cap Q^j)\to\ho(\widetilde{\Gamma}\cap Q^j)$ as $k\to\infty$, for every $\varepsilon>0$, there is $\bar{k}\in\N$ such that for every $k\geq \bar{k}$ we have
	\begin{align*}
		|\ho(\Gamma_k\cap Q^j)-\ho(\widetilde{\Gamma}\cap Q^j)|<\varepsilon.
	\end{align*}
	and
	\begin{align*}
		\Bigg|\widetilde{\psi}\Bigg(u^j\frac{\ho(\widetilde{\Gamma}\cap Q^j)}{\ho(\Gamma_k\cap Q^j)}\Bigg)-\widetilde{\psi}(u^j)\Bigg|<\varepsilon.
	\end{align*} 
	Then, from \eqref{ikenergyestimatereg} we have that
	\begin{align}\label{ikenergyestimatereg2}
		\nonumber&\sum_{j\in A_k} \Big|\int_{\Gamma_k\cap Q^j}\widetilde{\psi}\Bigg(u^j\frac{\ho(\widetilde{\Gamma}\cap Q^j)}{\ho(\Gamma_k\cap Q^j)}\Bigg)\dho-\int_{\widetilde{\Gamma}\cap Q^j} \widetilde{\psi}(u^j)\dho\Big|\\[5pt]
		&\hspace{1cm}\leq\varepsilon\sum_{j\in A_k}\ho(\Gamma_k\cap Q^j)+\varepsilon\sum_{j\in A_k}\widetilde{\psi}(u^j).
	\end{align}
	As $\varepsilon$ is arbitrary, we can conclude our estimate.\\
	
	Regarding the second sum on the right-hand side of \eqref{jkestimateenergy}, we use the a similar method as in \eqref{jkestimate}.  Now, for the first two terms can be estimated as follows,
	\begin{align}\label{jkestimatecuts}
		\nonumber&\sum_{j\in B_k} \Big|\int_{\Gamma_k\cap Q^j\cap S_k}\widetilde{\psi}(u^j)\dho-\int_{\Gamma^c\cap Q^j}\psi^c(u^j)\dho\Big| \\[5pt]
		%\nonumber&\hspace{1cm}\leq\sum_{j\in B_k} \Big|\int_{\Gamma_k\cap Q^j\cap S_k^\ell}\widetilde{\psi}\Big( a^j\frac{\ho(\Gamma^c\cap Q^j)}{\ho(\Gamma_k\cap Q^j\cap S_k^\ell)}\Big)\dho\\[5pt]
		%\nonumber&\hspace{2cm}+\int_{\Gamma_k\cap Q^j\cap S_k^r}\widetilde{\psi}\Big(b^j\frac{\ho(\Gamma^c\cap Q^j)}{\ho(\Gamma_k\cap Q^j\cap S_k^r)}\Big)\dho-\int_{\Gamma^c\cap Q_k^j\cap S_k}\psi^c(u^j)\dho\Big| \\[5pt]
		\nonumber&\hspace{1cm}= \sum_{j\in B_k}\big|\widetilde{\psi}\Bigg( a^j\frac{\ho(\Gamma^c\cap Q^j)}{\ho(\Gamma_k\cap Q^j\cap S_k^\ell)}\Bigg)\ho(\Gamma_k\cap Q^j\cap S_k^\ell)\\[5pt]
		&\hspace{2cm}+\widetilde{\psi}\Bigg( b^j\frac{\ho(\Gamma^c\cap Q^j)}{\ho(\Gamma_k\cap Q^j\cap S_k^r)}\Bigg)\ho(\Gamma_k\cap Q^j\cap S_k^r)
		-\psi^c(u^j)\ho(\Gamma^c\cap Q^j)\big|.
	\end{align}
	By using the same argument that led us to \eqref{ikenergyestimatereg2}, consider $\varepsilon>0$ as before, then, for $k$ large enough, we have
	\begin{align*}
		\nonumber&|\ho(\Gamma_k\cap Q^j\cap S_k^\ell)-\ho(\Gamma^c\cap Q^j)|<\varepsilon,\\[5pt]
		&|\ho(\Gamma_k\cap Q^j\cap S^r_k)-\ho(\Gamma^c\cap Q^j)|<\varepsilon,
	\end{align*}
	and, by the continuity of $\widetilde{\psi}$,
	\begin{align*}
		\nonumber&\Bigg|\widetilde{\psi}\Bigg( a^j\frac{\ho(\Gamma^c\cap Q^j)}{\ho(\Gamma_k\cap Q^j\cap S_k^\ell)}\Bigg)-\widetilde{\psi}(a^j)\Bigg|<\varepsilon,\\[5pt]
		&\Bigg|\widetilde{\psi}\Bigg( b^j\frac{\ho(\Gamma^c\cap Q^j)}{\ho(\Gamma_k\cap Q^j\cap S_k^r)}\Bigg)-\widetilde{\psi}(b^j)\Bigg|<\varepsilon.
	\end{align*}
	As a consequence, from \eqref{jkestimatecuts} we get
	\begin{align}\label{jkestimatecuts2}
		\nonumber&\sum_{j\in B_k} \Big|\int_{\Gamma_k\cap Q^j\cap S_k}\widetilde{\psi}(u^j)\dho-\int_{\Gamma^c\cap Q^j}\psi^c(u^j)\dho\Big| \\[5pt]
		\nonumber&\hspace{1cm}\leq\varepsilon\sum_{j\in B_k}\big(\ho(\Gamma_k\cap Q^j\cap S_k^\ell)+\ho(\Gamma_k\cap Q^j\cap S_k^r)\big)\\[5pt]
		\nonumber&\hspace{1.5cm}+\sum_{j\in B_k}|\widetilde{\psi}(a_j)\ho(\Gamma_k\cap Q^j\cap S_k^\ell)+\widetilde{\psi}(b_j)\ho(\Gamma_k\cap Q^j\cap S_k^r)-\psi^c(u^j)\ho(\Gamma^c\cap Q^j)|\\[5pt]
		\nonumber&\hspace{1cm}=\varepsilon\sum_{j\in B_k}\ho(\Gamma_k\cap Q^j\cap S_k)+\varepsilon\sum_{j\in B_k}\big(\widetilde{\psi}(a^j)+\widetilde{\psi}(b^j)\big)\\[5pt]
		&\hspace{1.5cm}+\sum_{j\in B_k} |\widetilde{\psi}(a^j)+\widetilde{\psi}(b^j)-\psi^c(u^j)|\ho(\Gamma^c\cap Q^j)
	\end{align}
	Now, we conclude our estimate by using \eqref{psiabu} and the fact that $\varepsilon$ is arbitrary.\\

	The third sum in the right hand side of \eqref{jkestimateenergy} can be treated in the same way as before. % Notice though, that if $k$ is large enough we have that
	Consider $\varepsilon>0$ as above, then, for $k$ large enough we have
	\begin{align*}
		|\ho\big((\widetilde{\Gamma}\cap Q^j)\setminus S_k\big)-\ho(\widetilde{\Gamma}\cap Q^j)|<\varepsilon
	\end{align*} 
	and
	\begin{align*}
		\Bigg|\widetilde{\psi}\Bigg(u_j\frac{\ho\big((\widetilde{\Gamma}\cap Q^j)\setminus S_k\big)}{\ho\big((\Gamma_k\cap Q^j)\setminus S_k\big)}\Bigg)-\widetilde{\psi}(u^j)\Bigg|<\varepsilon.
	\end{align*}
	Thus, we have
	\begin{align}\label{finalestimateenergy}
		\nonumber	&\sum_{j\in B_k} \Big|\int_{(\Gamma_k\cap Q^j)\setminus S_k}\widetilde{\psi}\Bigg(u_j\frac{\ho\big((\widetilde{\Gamma}\cap Q^j)\setminus S_k\big)}{\ho\big((\Gamma_k\cap Q^j)\setminus S_k\big)}\Bigg)\dho-\int_{(\widetilde{\Gamma}\cap Q^j)\setminus S_k} \widetilde{\psi}(u^j)\dho\Big|\\[5pt]
		\nonumber	%&\hspace{1cm}\leq\sum_{j\in B_k}\Big|\widetilde{\psi}\Big(u^j\frac{\ho\big((\Gamma\cap Q^j)\setminus S_k\big)}{\ho\big((\Gamma_k\cap Q^j)\setminus S_k\big)}\Big)\ho\big((\Gamma_k\cap Q^j)\setminus S_k\big)-\widetilde{\psi}(u^j)\ho\big((\widetilde{\Gamma}\cap Q^j)\setminus S_k\big)\Big|\\[5pt]
		\nonumber&\hspace{1cm}\leq\varepsilon\sum_{j\in B_k}\ho\big((\Gamma_k\cap Q^j)\setminus S_k\big)+\varepsilon\sum_{j\in B_k}\widetilde{\psi}(u^j)\ho\big((\Gamma_k\cap Q^j)\setminus S_k\big)\\[5pt]
		&\hspace{2cm}+ \sum_{j\in B_k}\big|\widetilde{\psi}(u^j)\ho\big((\Gamma_k\cap Q^j)\setminus S_k\big)-\widetilde{\psi}(u^j)\ho(\widetilde{\Gamma}\cap Q^j)\big|.
	\end{align}
	Since $\varepsilon$ is arbitrary and from the fact that $\ho\big((\Gamma_k\cap Q^j)\setminus S_k\big)\to\ho(\widetilde{\Gamma}\cap Q^j)$ as $k\to\infty$, we can conclude the last estimate. \\

	By putting together \eqref{step2bulkest},  \eqref{ikenergyestimatereg2}, \eqref{jkestimatecuts2} and \eqref{finalestimateenergy} in \eqref{jkestimateenergy},  we conclude that
	\begin{align*}
		\lim_{k\to\infty}\g(\Omega_k,v_k,\mu_k)=\g(\Omega,v,\mu).
	\end{align*}
\end{proof}

\begin{proposition}\label{proposition3}
	Let $(\Omega,v,\mu)$ be such that $h$ is a non-negative Lipschitz function, $v\in W^{1,2}(\Omega;\R^2)$ and $\mu=u\ho\llcorner\Gamma$, with $u\in L^1(\Gamma)$ a grid constant density. Then, there is a sequence $(\o_k,v_k,\mu_k)_k\subset \mathcal{A}_r(m,M)$, with  $\mu_k=u_k\ho\llcorner \Gamma_k$ and $u_k\in L^1(\ho\llcorner\Gamma_k)$ grid constant, such that 
	\begin{align*}
		\lim_{k\to\infty}\f(\o_k,v_k,\mu_k)=\g(\o,v,\mu),
	\end{align*}
	and $(\o_k,v_k,\mu_k)\to(\o, v,\mu)$, as $k\to\infty$.
\end{proposition}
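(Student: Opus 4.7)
Since $h$ is Lipschitz we have $\Gamma^c_h = \emptyset$ and $\widetilde{\Gamma}_h = \Gamma_h$, while $\mu$ is purely absolutely continuous with respect to $\ho\llcorner\Gamma$, so
\begin{align*}
\g(\o,v,\mu) = \int_\o W\big(E(v) - E_0(y)\big)\dd\textbf{x} + \int_\Gamma \widetilde{\psi}(u)\dho.
\end{align*}
The plan is to carry out the wriggling procedure outlined in the introduction: the gap between $\psi$ and $\widetilde{\psi}$ is recovered by inserting narrow vertical oscillations of the profile whose length exactly compensates for it. By Lemma \ref{lem:char_psi_s0} we may assume $\psi$ convex, with threshold $s_0\in(0,+\infty]$ such that $\widetilde{\psi}=\psi$ on $[0,s_0]$ and $\widetilde{\psi}(s)=\theta s$ on $(s_0,+\infty)$. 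The identity that drives the whole construction is that, for $s = r s_0$ with $r \geq 1$, $\widetilde{\psi}(s)\,L = \psi(s_0)\cdot(rL)$ for any $L\geq 0$: the right-hand side is precisely the $\psi$-surface energy of a profile of length $rL$ carrying constant density $s_0$. If $s_0=+\infty$ the claim is trivial by choosing $h_k=h$, $u_k=u$, $v_k=v$, so we assume $s_0<+\infty$.

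Let $(Q^j)_{j=1}^N$ be the admissible cover on which $u\equiv u^j$, and partition the index set into $A\coloneqq\{j:u^j\leq s_0\}$ and $B\coloneqq\{j:u^j>s_0\}$; set $r^j\coloneqq u^j/s_0>1$ for $j\in B$. For each $j\in B$, let $I_j\subset(a,b)$ be the projection of $\Gamma\cap Q^j$ on the $x$-axis and apply Proposition \ref{step3limsup} with $r=r^j$ to $h|_{\overline{I_j}}$, obtaining for each $k\in\N$ a non-negative Lipschitz $h_k^j:\overline{I_j}\to[0,+\infty)$ with $h\leq h_k^j$, $h_k^j$ coinciding with $h$ at the endpoints of $I_j$, $h_k^j\to h$ uniformly, $\ho(\mathrm{graph}(h_k^j))=r^j\ho(\Gamma\cap Q^j)$, and $\ho\llcorner\mathrm{graph}(h_k^j)\wtom r^j\ho\llcorner(\Gamma\cap Q^j)$. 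Glue to define $\hat h_k\coloneqq h_k^j$ on $I_j$ for $j\in B$ and $\hat h_k\coloneqq h$ elsewhere; the matching boundary values make $\hat h_k$ continuous, and Remark \ref{remarkwriggle}(iii) yields a uniform Lipschitz bound. Set $\Gamma_k\coloneqq\Gamma_{\hat h_k}$, and define $u_k\equiv u^j$ on $\Gamma_k\cap Q^j$ for $j\in A$ (where $\Gamma_k=\Gamma$ locally) and $u_k\equiv s_0$ on the wriggled piece corresponding to each $j\in B$. The mass constraint is automatic, because on each $j\in B$
\begin{align*}
\int_{\Gamma_k\cap Q^j} u_k\dho = s_0\cdot r^j\ho(\Gamma\cap Q^j) = u^j\ho(\Gamma\cap Q^j) = \int_{\Gamma\cap Q^j} u\dho,
\end{align*}
while it is trivially preserved on $j\in A$. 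Finally, extend $v$ to $\Omega_{\hat h_k}\supset\Omega$ by a vertical shift of vanishing amplitude, exactly as in \eqref{displacementrestriction}, so that $v_k\rightharpoonup v$ in $W^{1,2}_{\mathrm{loc}}(\Omega;\R^2)$.

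The Hausdorff convergence $\R^2\setminus\Omega_k\to\R^2\setminus\Omega$ follows from $\hat h_k\to h$ uniformly; the weak-$*$ convergence $\mu_k\wtom\mu$ is verified square by square, the low squares being trivial and the high squares following from property (v) of Proposition \ref{step3limsup} together with $s_0 r^j=u^j$; the bulk-energy convergence is standard, modelled on Step 1.3 of Proposition \ref{propfinitenumbercuts}. The key surface-energy computation turns out to be an \emph{equality} for every $k$:
\begin{align*}
\int_{\Gamma_k}\psi(u_k)\dho
= \sum_{j\in A}\psi(u^j)\ho(\Gamma\cap Q^j) + \sum_{j\in B}\psi(s_0)\,r^j\ho(\Gamma\cap Q^j)
= \int_\Gamma\widetilde{\psi}(u)\dho,
\end{align*}
where we used $\psi=\widetilde{\psi}$ on $[0,s_0]$ and $\widetilde{\psi}(u^j)=r^j\psi(s_0)$ on $B$. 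The main technical obstacle I anticipate is enforcing the area constraint $\mathcal{L}^2(\Omega_{h_k}\cap\{x_2\geq 0\})=M$: since $\hat h_k\geq h$ we pick up an excess $\varepsilon_k\coloneqq\frac{1}{b-a}\int_a^b(\hat h_k-h)\dd x\to 0$ that must be removed while preserving both the non-negativity of the profile and the mass constraint already achieved on $u_k$. The cleanest remedy is to concentrate the correction in a single square $Q^{j_0}$ on which $h$ is bounded below by a positive constant (available for $k$ large whenever $\mu\not\equiv 0$), subtracting from $\hat h_k$ there a small Lipschitz bump of integral $\varepsilon_k(b-a)$ and simultaneously adjusting $u_k$ on that piece by an $o(1)$ amount to compensate the length change; continuity of $\psi$ and $\widetilde{\psi}$ ensures the perturbation is invisible in the limit of the energy.
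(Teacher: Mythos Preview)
Your core construction---partition by the grid, leave the low squares untouched, wriggle the high squares with ratio $r^j=u^j/s_0$, and set $u_k\equiv s_0$ there---is exactly the paper's approach, and your surface-energy computation matches the paper's Step~5.2 verbatim. Two points deserve comment.

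First, the reduction to convex $\psi$ is not a consequence of Lemma~\ref{lem:char_psi_s0} alone. That lemma only tells you $\widetilde{\psi}=\widetilde{\psi^{\mathrm{cvx}}}$, i.e.\ the \emph{target} energy $\g$ is unchanged under convexification; it says nothing about $\f$, which involves $\psi$ itself. What is actually needed (the paper's Step~1) is the classical relaxation fact that on a fixed Lipschitz graph one can find densities $w_m\rightharpoonup w$ in $L^1$ with $\int\psi(w_m)\to\int\psi^{\mathrm{cvx}}(w)$, and then diagonalize. Without this your identity $\widetilde{\psi}(u^j)=r^j\psi(s_0)$ can fail, since for nonconvex $\psi$ one only has $\widetilde{\psi}(s_0)=\psi^{\mathrm{cvx}}(s_0)$, not $\psi(s_0)$.

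Second, for the area constraint you propose a local bump correction in one square, whereas the paper uses the global vertical scaling $h_k=\gamma_k\bar h_k$ with $\gamma_k=M/|\overline{\Omega}_k|\le 1$, and then compensates the adatom mass by the single factor $u_k=\bar u_k/t_k$ with $t_k=\ho(\Gamma_k)/\ho(\overline{\Gamma}_k)$; correspondingly the displacement is $v_k(x,y)=v(x,\gamma_k y)$ rather than a shift. Your local fix is workable but buys you two extra bookkeeping chores (the bump may sit on a wriggled piece, and the resulting profile is neither above nor below $h$, so the displacement extension via \eqref{displacementrestriction} must be done with the uniform amplitude $\|\hat h_k-h\|_\infty$ and then restricted). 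The paper's scaling keeps everything global and monotone in $k$, which makes the bulk-energy convergence (Step~5.1) a direct change of variables rather than an extension--restriction argument. Either route closes; the scaling one is cleaner.
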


\begin{proof}
	\emph{Step 1}. Denote by $\psi^{\text{cvx}}$ the convex envelope of $\psi$, namely,
	\begin{align*}
		\psi^{\text{cvx}}\coloneqq\{\rho: \rho\ \text{is convex and}\ \rho\leq\psi\}.
	\end{align*}
	It is well known (see, for instance, \cite[Theorem 5.32 and Remark 5.33]{FL_book}) that for any given density $w\in L^1(\Gamma_g)$, with $g$ a Lipschitz function, then there is a sequence $(w_m)_m\subset L^1(\Gamma_g)$ such that $w_m\wto w$ in $L^1(\Gamma_g)$ and
	\begin{align*}
		\lim_{m\to\infty}\int_{\Gamma_g}\psi(w_m)\dho=\int_{	\Gamma_g}\psi^{\text{cvx}}(w)\dho.
	\end{align*}
	In particular, $w_m\ho\llcorner \Gamma_g\wtom w\ho\llcorner\Gamma_g$ as $k\to\infty$. Therefore, if we prove the statement of the proposition for $\psi$ convex we also have it for $\psi$ Borel. Thus, from now on, in order to enlighten the notation, we will assume $\psi$ to be a convex function. \\
	
	\emph{Step 2}. Take any configuration $(\Omega,v,\mu)$, where $h$ is a Lipschitz function, $v\in W^{1,2}(\Omega;\R^2)$ and $\mu=u\ho\llcorner\Gamma$ is a grid constant density. Then, we can consider a finite grid of open squares $(Q^j)_{j\in J}$ such that
	\begin{align*}
		u_{|Q^j\cap \Gamma}=u^j\in\R.
	\end{align*}
	for each $j\in J$. By construction, there are finitely many points $a=x^0<x^1<\dots<x^n=b$ such that $u=u^i\in\R$ on
	\begin{align*}
		\text{graph}(h)\cap [(x^i,x^{i+1})\times \R],
	\end{align*}
	for every $i=0,\dots,n$ (see Figure \ref{fig:limsup_wriggling}). 
	\begin{figure}
		\includegraphics[scale=1]{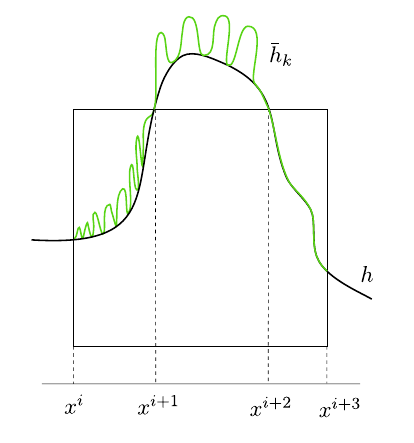}
		\caption{On each interval $[x^i,x^{i+1}]$, depending on whether $u^j>s_0$ or not, we will apply the wriggling process and change the density to $s_0$, or do not change anything.}
		\label{fig:limsup_wriggling}
	\end{figure}
	
	Define the index sets
	\begin{align}\label{ajbj}
		\nonumber A&\coloneqq\{i=1,\dots,n: u^i\leq s_0 \},\\
		B&\coloneqq \{i=1,\dots,n\}\setminus A,
	\end{align}
	where $s_0$ is given by Lemma \ref{lem:char_psi_s0}. In such a way, we are going to apply the wriggling process for $i\in B$.
	By Lemma \ref{step3limsup}, for every $i\in B$, we choose $r^i> 1$ such that
	\begin{align*}
		u^i=r^is_0.
	\end{align*} 
	and we have, on each interval $(x^i,x^{i+1})$, a Lipschitz sequence $(\bar{h}^i_k)_k$,  that verifies the following properties:
	\begin{enumerate}
		\item[$(i)$] $\ho(\Gamma_k^i)= r^i\ho(\Gamma\cap [(x^i,x^{i+1})\times \R])$, where $\Gamma_k^i\coloneqq\text{graph}(\bar{h}_k^i)$,\vspace{0.1cm}
		\item[$(ii)$] $h(x^i)=\bar{h}_k^i(x^i)$, and $ h(x^{i+1})=\bar{h}_k^i(x^{i+1})$,\vspace{0.1cm}
		\item[$(iii)$] $h_{|(x^i,x^{i+1})}\leq \bar{h}_k^i$,\vspace{0.1cm}
		\item[$(iv)$] $\bar{h}_k^i\to h_{|(x^i,x^{i+1})}$ uniformly as $k\to\infty$,\vspace{0.1cm}
		\item[$(v)$] $\ho\llcorner\Gamma_k^i\wtom r^i\ho\llcorner(\Gamma\cap (x^i,x^{i+1})\times \R)$, as $k\to\infty$.
	\end{enumerate}
	Then, we define the Lipschitz sequence $(\bar{h}_k)_k$ as
	\begin{align*}
		\bar{h}_{k|(x^i,x^{i+1})}\coloneqq\begin{cases}
			\bar{h}_k^i\quad&u^i>s_0,\\[5pt]
			h_{|(x^i,x^{i+1})}\quad& u^i\leq s_0,
		\end{cases}
	\end{align*}
	By setting $\overline{\Gamma}_k\coloneqq\text{graph}(\bar{h}_k)$, we define the density $\bar{u}_k$ on $\overline{\Gamma}_k$ as
	\begin{align*}
		\bar{u}_{k|(x^i,x^{i+1})\times\R}\coloneqq\begin{cases}
			s_0\quad&u^i>s_0\\[5pt]
			u^i\quad&u^i\leq s_0,
		\end{cases}
	\end{align*}
	We have that the sequence $(\bar{u}_k)_k$ define above satisfies the density constraint.
	Indeed, by considering the index set defined in \eqref{ajbj}, we have
	\begin{align*}
		\int_{\overline{\Gamma}_k}\bar{u}_k\dho&=\sum_{i\in A} \int_{\overline{\Gamma}_k\cap [(x^i,x^{i+1})\times \R]}	u^i\dho+\sum_{i\in B}\int_{\overline{\Gamma}_k\cap [(x^i,x^{i+1})\times \R]}s_0\dho \\[5pt]
		&=\sum_{i\in A}u^i\ho(\overline{\Gamma}_k\cap [(x^i,x^{i+1})\times \R])+\sum_{i\in B}s_0\ho(\overline{\Gamma}_k\cap [(x^i,x^{i+1})\times \R])\\[5pt]
		&=\sum_{i\in A}u^i\ho(\Gamma\cap [(x^i,x^{i+1})\times \R])+\sum_{i\in B}s_0r^i\ho(\Gamma\cap [(x^i,x^{i+1})\times \R])\\[5pt]
		&=\sum_{i=1 }^n \int_{\Gamma\cap [(x^i,x^{i+1})\times \R]}u^i\dho\\[5pt]
		&=m,
	\end{align*}
	where in the third to last step we used the fact that 
	\begin{align}\label{localwrigglecondition}
		\ho(\Gamma_k\cap [(x^i,x^{i+1})\times \R])=r^i\ho(\Gamma\cap[(x^i,x^{i+1})\times \R]),
	\end{align}
	for every $i\in B$.\\
	
	\emph{Step 3}. Since in general $h\leq \bar{h}_k$, we have that $M=|\o|\leq|\overline{\o}_k|$, where $\overline{\o}_k$ is the sub-graph of $\bar{h}_k$, for each $k\in\N$. In order to fix the mass constraint we set
	\begin{align*}
		\gamma_k\coloneqq \frac{M}{|	\overline{\o}_k|}\leq1,
	\end{align*}
	and we have that $\gamma_k\to 1$ as $k\to\infty$. Define, for each $k\in\N$,
	\begin{align*}
		h_k\coloneqq \gamma_k \bar{h}_k.
	\end{align*}
	Now the sequence $(h_k)_k$ satisfies the mass constraint, indeed
	\begin{align*}
		\int_a^b h_k\ \text{d}x=\int_a^b \gamma_k \bar{h}_k\ \text{d}x=\gamma_k|\overline{\o}_k|=M.
	\end{align*}
	Now, let $\Gamma_k\coloneqq\gr(h_k).$
	Since in general, for every $k\in\N$, $\ho(\Gamma_k)\leq\ho(\overline{\Gamma}_k)$, we need to adjust the density constraint. By knowing that 
	\begin{align*}
		\int_{\overline{\Gamma}_k}\bar{u}_k\dho=m,
	\end{align*}
	we need to define a new sequence of density $(u_k)_k$ on $\Gamma_k$ such that, for every $k\in\N$,
	\begin{align*}
		\int_{\Gamma_k}u_k\dho=m.
	\end{align*}
	Thus we set, for each $k\in\N$,
	\begin{align*}
		u_k\coloneqq\frac{\bar{u}_k}{t_k},
	\end{align*}
	with
	\begin{align*}
		t_k\coloneqq\frac{\ho(\Gamma_k)}{\ho(\overline{\Gamma}_k)} \leq1.
	\end{align*}
	Notice that $t_k\to1$ as $k\to\infty$. We have that the sequence $(u_k)_k$ satisfies the density constraint. Indeed,
	\begin{align*}
		\int_{\Gamma_k}u_k\dho=\frac{\bar{u}_k}{t_k}\ho(\Gamma_k)=\bar{u}_k\ho(\overline{\Gamma}_k)=\int_{\overline{\Gamma}_k}\bar{u}_k\dho=m.
	\end{align*}

	\emph{Step 3}. We now prove the convergence of the density, namely $u_k\ho\llcorner\Gamma_k\wtom u\ho\llcorner\Gamma$. To do so, we first prove that $\bar{u}_k\ho\llcorner\Gamma_k\wtom u\ho\llcorner\Gamma$, and then we conclude by triangle inequality.
	
	Take any $\varphi\in C_c(\R^2)$ and consider $\varepsilon>0$. We can find $\delta>0$ such that, if $\textbf{x},\textbf{y}\in\R^2$ satisfy
	\begin{align*}
		|\textbf{y}-\textbf{x}|<\delta,
	\end{align*}
	then
	\begin{align}\label{step3unifcont}
		|\varphi(\textbf{y})-\varphi(\textbf{x})|<\varepsilon.
	\end{align}
	Up to refining the intervals $(x^i,x^{i+1})$, we can assume that
	\begin{align*}
		|x^i-x^{i+1}|<\frac{\delta}{\sqrt{2}}.
	\end{align*}
	Let $K>0$ such that for every $k\in\N$ we have $h_k\leq K$ and $h\leq K$. This is possible, as our sequence is uniformly bounded by definition and $h$ is bounded. Consider a finite partition of $[0,K]$ given by $y^0=0,y^1,\dots,y^m=K$, such that for every $l=1,\dots,m$ we have
	\begin{align*}
		|y^l-y^{l+1}|<\frac{\delta}{\sqrt{2}}.
	\end{align*}
	Moreover, for every $l$, consider $\bar{y}^l\in[y^l,y^{l+1}]$. Then, from \eqref{step3unifcont}, for every $\textbf{x}\in[x^i,x^{i+1}]\times[y^l,y^{l+1}]$, we have
	\begin{align*}
		|\varphi(\textbf{x})-\varphi(\bar{x}^i,\bar{y}^l)|<\varepsilon.
	\end{align*}
	We then have
	\begin{align*}
		\nonumber\Big|\int_{\overline{\Gamma}_k}\bar{u}_k\varphi \dho-\int_{	\Gamma}u\varphi \dho\Big| &=\Big|\sum_{i\in A}\int_{\overline{\Gamma}_k\cap [(x^i,x^{i+1})\times \R]}u^i\varphi \dho+\sum_{i\in B}\int_{\overline{\Gamma}_k\cap [(x^i,x^{i+1})\times \R]}s_0\varphi \dho\\[5pt]
		\nonumber&\hspace{1cm}-\sum_{i=0}^{n}\int_{\Gamma\cap [(x^i,x^{i+1})\times \R]}u^i\varphi \dho\Big| \\[5pt]
		\nonumber&=\sum_{i\in B}\Big|\int_{\overline{\Gamma}_k\cap [(x^i,x^{i+1})\times \R]}s_0\varphi \dho-\int_{\Gamma\cap [(x^i,x^{i+1})\times \R]}u^i\varphi \dho\Big|\\[5pt]
		\nonumber&=\sum_{l=0}^{m}\sum_{i \in B}\Big|\int_{\overline{\Gamma}_k\cap [(x^i,x^{i+1})\times (y^l,y^{l+1})]}s_0\big[\varphi(\textbf{x})-\varphi\big(\bar{x}^i,\bar{y}^l\big)\big]\dho\Big|\\[5pt]
		\nonumber&\hspace{1cm}+\sum_{l=0}^{m}\sum_{i\in B}\Big|\int_{\Gamma\cap [(x^i,x^{i+1})\times(y^l,y^{l+1})]}u^i \big[\varphi(\textbf{x})-\varphi\big(\bar{x}^i,\bar{y}^l\big)\big]\dho  \Big|\\[5pt]
		\nonumber&\hspace{1cm}+\sum_{l=0}^{m}\sum_{i \in B}\big|s_0\varphi(\bar{x}^i,\bar{y}^l)\ho(\overline{\Gamma}_k\cap [(x^i,x^{i+1})\times (y^l,y^{l+1})])\\[5pt]
		&\hspace{1cm}-u^i\varphi(\bar{x}^i,\bar{y}^l)\ho(\Gamma\cap [(x^i,x^{i+1})\times (y^l,y^{l+1})])\big|\\[5pt]
		&\leq\varepsilon s_0 \sum_{l=0}^{m}\sum_{i \in B}\ho(\overline{\Gamma}_k\cap [(x^i,x^{i+1})\times(y^l,y^{l+1})])\\[5pt]
		&\hspace{1cm}+\varepsilon u^i\sum_{l=0}^{m}\sum_{i \in B}\ho(\Gamma\cap [(x^i,x^{i+1})\times(y^l,y^{l+1})])\\[5pt]
		&\hspace{1cm}+||\varphi||_{\mathcal{C}^0(\R^2)}\sum_{l=0}^{m}\sum_{i \in B}|s_0\ho(\overline{\Gamma}_k\cap [(x^i,x^{i+1})\times(y^l,y^{l+1})])\\[5pt]
		&\hspace{1cm}-u^i\ho(\Gamma\cap [(x^i,x^{i+1})\times(y^l,y^{l+1})])|\\[5pt]
		&\leq\varepsilon s_0 \sum_{i \in B}\ho(\overline{\Gamma}_k\cap [(x^i,x^{i+1})\times\R])\\[5pt]
		&\hspace{1cm}+\varepsilon u^i\sum_{i \in B}\ho(\Gamma\cap [(x^i,x^{i+1})\times\R])\\[5pt]
		&\hspace{1cm}+||\varphi||_{\mathcal{C}^0(\R^2)}\sum_{i \in B}|s_0\ho(\overline{\Gamma}_k\cap [(x^i,x^{i+1})\times\R])\\[5pt]
		&\hspace{1cm}-u^i\ho(\Gamma\cap [(x^i,x^{i+1})\times\R])|\\[5pt]
	\end{align*}
	Now, by using condition \eqref{localwrigglecondition} we get
	\begin{align}\label{gammagkestimate}
		\Big|\int_{\overline{\Gamma}_k}\bar{u}_k\varphi \dho-\int_{	\Gamma}u\varphi \dho\Big|\leq 2\varepsilon||u||_{L^1(\Gamma)},
	\end{align}
	where we can conclude as $\varepsilon$ was arbitrary.\\
	
	In order to prove that
	$u_k\ho\llcorner\Gamma_k\wtom u\ho\llcorner\Gamma$, we can use \eqref{gammagkestimate} together with the triangle inequality and the following estimates. We fix $\varphi$ and $\varepsilon$ as in \eqref{step3unifcont}, so we have
	\begin{align}\label{limsupconvergencedensity}
		\nonumber\Big|\int_{\Gamma_k}u_k\varphi \dho&-\int_{	\overline{\Gamma}_k}\bar{u}_k\varphi \dho\Big|\\[5pt]\nonumber&=\Big|\int_a^b\Big(\frac{\bar{u}_k}{t_k}\varphi(x,h_k(x))\sqrt{1+\gamma_k^2\bar{h}_k'(x)^2}-\bar{u}_k\varphi(x,\bar{h}_k(x))\sqrt{1+\bar{h}_k'(x)^2}\Big)\ \text{d}x\Big| \\[5pt]\nonumber
		&\leq\Big|\int_a^b\Big[\Big(\frac{1}{t_k}-1\Big)\bar{u}_k\varphi(x,h_k(x))\sqrt{1+\gamma_k^2\bar{h}_k'(x)^2}\\[5pt]
		&\hspace{1cm}+\bar{u}_k\varphi(x,h_k(x))\sqrt{1+\gamma_k^2\bar{h}_k'(x)^2}-\bar{u}_k\varphi(x,\bar{h}_k(x))\sqrt{1+\bar{h}_k'(x)^2}\Big]\ \text{d}x\Big|.
	\end{align}
	Regarding the first term on the right hand side of \eqref{limsupconvergencedensity}, we have that the sequence $(\bar{h}_k)_k$ is uniformly Lipschitz, as stated in Remark \ref{remarkwriggle}. Then there is $L>0$ such that $|\bar{h}_k'|\leq L$. Furthermore, we have that, for every $k\in\N$, $|\bar{u}_k|\leq C$, with $C>0$, and we get
	\begin{align}\label{estimate1densitylimsup}
		\Big|\int_{a}^{b}\Big(\frac{1}{t_k}-1\Big)\bar{u}_k\varphi(x,h_k(x))\sqrt{1+\gamma_k^2\bar{h}_k'(x)^2}\ \text{d}x\Big|\leq \Big|\frac{1}{t_k}-1\Big|C||\varphi||_{\mathcal{C}^0(\R^2)}\sqrt{1+\gamma_k^2L^2},
	\end{align}
	Now, we estimate the remaining two terms on the right-hand side of \eqref{limsupconvergencedensity}. Let $\varepsilon'>0$. There is $k'\in\N$ such that for $k\geq k'$ we have 
	\begin{align*}
		|\gamma_k-1|&\leq\varepsilon'.
	\end{align*}
	Since the function $x\mapsto\sqrt{1+x^2}$ is Lipschitz we have
	\begin{align}\label{squarerootlips}
		\big|\sqrt{1+\gamma_k^2\bar{h}_k'(x)^2}-\sqrt{1+\bar{h}_k'(x)^2}\big|
		\leq2|\gamma_k\bar{h}'_k(x)-\bar{h}'_k(x)|
		\leq2L|\gamma_k-1|
		\leq 2L\varepsilon'.
	\end{align}
	Thus we have
	\begin{align}\label{estimate2densitylimsup}
		\nonumber
		\int_a^b\Big| \bar{u}_k\varphi(x,&h_k(x))\sqrt{1+\gamma_k^2\bar{h}_k'(x)^2}-\bar{u}_k\varphi(x,\bar{h}_k(x))\sqrt{1+\bar{h}_k'(x)^2}\Big|\ \text{d}x\\[5pt]
		\nonumber&\leq \int_a^b\Big|\bar{u}_k\varphi(x,h_k(x))\sqrt{1+\gamma_k^2\bar{h}_k'(x)^2}-\bar{u}_k\varphi(x,h_k(x))\sqrt{1+\bar{h}_k'(x)^2}\Big|\ \text{d}x\\[5pt]
		&\hspace{1cm}+\int_a^b\Big|\bar{u}_k\varphi(x,h_k(x))\sqrt{1+\bar{h}_k'(x)^2}-\bar{u}_k\varphi(x,\bar{h}_k(x))\sqrt{1+\bar{h}_k'(x)^2}\Big|\ \text{d}x.
	\end{align}
	Then, the first term on the right-hand side of \eqref{estimate2densitylimsup} can be estimated by using \eqref{squarerootlips} and we get
	\begin{align}\label{34step1}
		\int_a^b\Big|\bar{u}_k\varphi(x,h_k(x))\sqrt{1+\gamma_k^2\bar{h}_k'(x)^2}&-\bar{u}_k\varphi(x,h_k(x))\sqrt{1+\bar{h}_k'(x)^2}\Big|\ \text{d}x 
		\leq K'\varepsilon',
	\end{align}
	where $K'\coloneqq2LC(b-a)||\varphi||_{\mathcal{C}^0(\R^2)}$. \\
	The second term on the right-hand side of \eqref{estimate2densitylimsup} is estimated by using the uniform continuity of $\varphi$. Since there is $C'>0$ such that $|h_k|<C'$, for every $k\in\N$, we also have
	\begin{align*}
		|h_k(x)-\bar{h}_k(x)|&=|\gamma_k-1||\bar{h}_k(x)|\leq\varepsilon'C'.
	\end{align*}
	As a consequence, by using a similar approach as in \eqref{step3unifcont},  we get
	\begin{align}\label{34step2}
		\int_a^b\Big|\bar{u}_k\varphi(x,h_k(x))\sqrt{1+\bar{h}_k'(x)^2}-\bar{u}_k\varphi(x,\bar{h}_k(x))\sqrt{1+\bar{h}_k'(x)^2}\Big|\ \text{d}x\leq K''\varepsilon,
	\end{align}
	where $K''\coloneqq(b-a)C\sqrt{1+L^2}$.
	
	By putting \eqref{34step1} and \eqref{34step2} in \eqref{squarerootlips}, we get that
	\begin{align}\label{kprimoksecondo}
		\int_a^b\Big| \bar{u}_k\varphi(x,h_k(x))&\sqrt{1+\gamma_k^2\bar{h}_k'(x)^2}-\bar{u}_k\varphi(x,\bar{h}_k(x))\sqrt{1+\bar{h}_k'(x)^2}\Big|\ \text{d}x\leq K'\varepsilon'+ K''\varepsilon.
	\end{align}
	Now, by putting \eqref{estimate1densitylimsup} and \eqref{kprimoksecondo} in \eqref{limsupconvergencedensity} we get
	\begin{align}\label{secondsteptraingleineq}
		\Big|\int_{\Gamma_k}u_k\varphi \dho-\int_{	\overline{\Gamma}_k}\bar{u}_k\varphi \dho\Big|\leq K'\varepsilon'+ K''\varepsilon+\Big|\frac{1}{t_k}-1\Big|C||\varphi||_{\mathcal{C}^0(\R^2)}\sqrt{1+\gamma_k^2L^2}.
	\end{align}
	Finally, by using \eqref{gammagkestimate} and \eqref{secondsteptraingleineq} we get
	\begin{align*}
		\Big|\int_{\Gamma_k}u_k\varphi \dho-\int_{	\Gamma}u\varphi \dho\Big| &\leq\Big|\int_{\Gamma_k}u_k\varphi \dho-\int_{\overline{\Gamma}_k}\bar{u}_k\varphi \dho\Big| +\Big|\int_{\overline{\Gamma}_k}\bar{u}_k\varphi \dho-\int_{	\Gamma}u\varphi \dho\Big|\\[5pt]
		&\leq 2\varepsilon||u||_{L^1(\Gamma)}+K'\varepsilon'+ K''\varepsilon+\Big|\frac{1}{t_k}-1\Big|C||\varphi||_{\mathcal{C}^0(\R^2)}\sqrt{1+\gamma_k^2L^2}.
	\end{align*}
	we can conclude since $\varepsilon$ and $\varepsilon'$ were arbitrary and by letting $k\to\infty$.\\

	\emph{Step 4}. Regarding the displacement, set
	\begin{align*}
		v_k(x,y)\coloneqq v(x,\gamma_k y).
	\end{align*}
	The definition of the $v_k$'s is well posed, indeed $(x,\gamma_ky)\in\o_{k}$ if and only if $y\leq \bar{h}_k(x)$. In particular $h\leq \bar{h}_k$, hence $v(x,\gamma_k y)$ is well defined at every point. Notice that, since $h_k\geq 0$, we have that for $y\leq0$ it holds $v_k=v$. Thus, denote the bounded open set
	\begin{align*}
		\o^+\coloneqq \o\cap \{y>0\},
	\end{align*}
	and note that the set
	\begin{align*}
		\o^+_k\coloneqq\{(x,\gamma_ky):(x,y)\in\o^+\}.
	\end{align*}
	is also open and bounded. \\

	We now prove that $v_k\wto v$ in $W^{1,2}_\text{loc}(\o;\R^2)$, as $k\to\infty$. Indeed, take  $\varphi\in\mathcal{C}_c(\R^2)$. Fix $\varepsilon>0$ and since $\varphi$ is uniformly continuous, we have that $|\varphi(\textbf{x})-\varphi(\textbf{y})|<\varepsilon$, every time $|\textbf{x}-\textbf{y}|<\delta$ for some $\delta>0$. In particular, since $\gamma_k\to1$, if $k$ is large enough, we have
	\begin{align*}
		\Big|\varphi\Big(x,\frac{y}{\gamma_k}\Big)-\varphi(x,y)\Big|<\varepsilon.
	\end{align*}
	By using the above fact, we get
	\begin{align*}
		\Big|\int_{\R^2}v_k\varphi\  \text{d}\textbf{x}-\int_{\R^2}v\varphi\  \text{d}\textbf{x} \Big|&=\Big|\int_{\o_k^+}v_k\varphi\  \text{d}\textbf{x}-\int_{\o^+}v\varphi\ \text{d}\textbf{x} \Big| \\[5pt]
		&=\Big|\frac{1}{\gamma_k}\int_{\o^+}v(x,y)\varphi\Big(x,\frac{y}{\gamma_k}\Big)\ \text{d}x\text{d}y-\int_{\o^+}v(x,y)\varphi(x,y)\ \text{d}x\text{d}y\Big|\\[5pt]
		&\leq\frac{1}{\gamma_k}\Big|\int_{\o^+}v(x,y)\Big[\varphi\Big(x,\frac{y}{\gamma_k}\Big)-\varphi(x,y)\Big]\ \text{d}x\text{d}y\Big|\\[5pt]
		&\hspace{1cm}+\Big(\frac{1}{\gamma_k}-1\Big)\int_{\o^+}v(x,y)\varphi(x,y)\ \text{d}x\text{d}y\Big|\\[5pt]
		&\leq\frac{\varepsilon}{\gamma_k}||v||_{L^1(\o)}+\Big|\frac{1}{\gamma_k}-1\Big|||v||_{L^2(\o)}||\varphi||_{L^2(\o)}.
	\end{align*}
	By letting $\varepsilon\to0$ and $k\to\infty$ we conclude the first estimate. Here, we used the Sobolev embedding for $W^{1,2}(\o^+;\R^2)$.
	
	Now we prove the convergence of the gradient.  First we note that the gradients are uniformly bounded, namely it can be verified that
	\begin{align*}
		||\nabla v_k||_{L^2(\o)}\leq C ||\nabla v||_{L^2(\o)},
	\end{align*}
	for some positive uniform constant $C>0$.
	Thus, we have
	\begin{align*}
		\Big|\int_{\R^2}\nabla  v_k\cdot \nabla\varphi\  \text{d}\textbf{x}-\int_{\R^2}\nabla v\cdot\nabla\varphi\  \text{d}\textbf{x}\Big|&=\Big|\int_{\o_k^+}\nabla v_k\cdot \nabla\varphi\  \text{d}\textbf{x}-\int_{\o^+}\nabla v\cdot \nabla\varphi\  \text{d}\textbf{x}\Big|\\[5pt]
		&=\frac{1}{\lambda_k}\int_{\o^+}\partial_xv(x,y)\partial_x\varphi\Big(x,\frac{y}{\lambda_k}\Big)\ \text{d}x\text{d}y\\[5pt]
		&\hspace{1cm}+\int_{\o^+}\partial_yv(x,y)\partial_y\varphi\Big(x,\frac{y}{\lambda_k}\Big)\ \text{d}x\text{d}y,
	\end{align*}
	and, from similar estimates as before, together with the uniform boundedness of the gradients, we can conclude that $v_k\wto v$ in $W^{1,2}(\o^+;\R^2)$, as $k\to\infty$.\\
	
	\emph{Step 5}. It remains to prove the convergence of the energy. Set $\mu_k\coloneqq u_k\ho\llcorner\Gamma_k$. We have
	\begin{align}\label{limsupstepzero}
		\nonumber	\f(\o_k,v_k,\mu_k)-\g(\o,v,\mu)
		&= \int_{\o_k}W\big(E(v_k)-E_0(y)\big)\ \text{d}\textbf{x}-\int_{\o}W\big(E(v)-E_0(y)\big)\ \text{d}\textbf{x} \\[5pt]
		&\hspace{1cm}+\int_{\Gamma_k}\psi(u_k)\dho-\int_{\Gamma}\widetilde{\psi}(u)\dho 
	\end{align}
	\emph{Step 5.1} We now prove the convergence of the bulk term in \eqref{limsupstepzero}.
	\begin{align}\label{bulktermlimsup}
		\nonumber	\int_{\o_k}W\big(E(v_k)&-E_0(y)\big)\ \text{d}\textbf{x}-\int_{\o}W\big(E(v)-E_0(y)\big)\ \text{d}\textbf{x}\\[5pt] 
		\nonumber	&=\int_{\o_k}W\big(E(v(x,\gamma_ky))-E_0(y)\big)\ \text{d}\textbf{x}-\int_{\o}W\big(E(v)-E_0(y)\big)\ \text{d}\textbf{x}\\[5pt]
		\nonumber
		&=\frac{1}{\gamma_k}\Big[\int_{\overline{\o}_k}W\big(E(v(x,z))-E_0\Big(\frac{z}{\gamma_k}\Big)\big)\ \text{d}x\text{d}z-\int_{\o}W\big(E(v)-E_0(z)\big)\ \text{d}x\text{d}z\Big]\\[5pt]
		&\hspace{1cm}+\Big(\frac{1}{\gamma_k}-1\Big)\int_{\o}W\big(E(v)-E_0(z)\big)\ \text{d}x\text{d}z.
	\end{align}
	By noticing that $E_0(z)=E_0(z/\gamma_k)$,
	fix $\varepsilon'>0$  such that, if $k$ is large enough, $|\o_k\setminus\o|\leq\varepsilon'$. In the first two terms on the right-hand side of \eqref{bulktermlimsup}, we have that, for every $k$, $\overline{\o}\subset\o_k$, and then we can proceed as in \eqref{step1bulkest}, and we get
	\begin{align*}
		\frac{1}{\gamma_k}\Big[\int_{\o_k}W\big(E(v(x,z))&-E_0(z)\big)\ \text{d}x\text{d}z-\int_{\o}W\big(E(v)-E_0(z)\big)\ \text{d}x\text{d}z\Big] \\[5pt]
		&=\frac{1}{\gamma_k}\int_{\o_k\setminus\o}W\big(E(v)-E_0(y)\big)\ \text{d}x\text{d}z.
	\end{align*}
	From here we conclude by Dominated Convergence Theorem. Notice that
	the second term on the right-hand side of \eqref{bulktermlimsup} is going to zero, since $\gamma_k\to1$ as $k\to\infty$. \\
	From here we conclude the convergence of the bulk term in \eqref{limsupstepzero}.\\
	
	\emph{Step 5.2} We now consider the surface terms in \eqref{limsupstepzero}.
	Using the index sets defined in \eqref{ajbj}, we get
	\begin{align}\label{finalenergy}
		\nonumber\int_{\Gamma_k}\psi(u_k)\dho&=\sum_{i\in       A}\int_{\Gamma_k\cap [(x^i,x^{i+1})\times \R]}\psi\Big(\frac{u^j}{t_k}\Big)\dho+\sum_{i\in B}\int_{\Gamma_k\cap [(x^i,x^{i+1})\times \R]}\psi\Big(\frac{s_0}{t_k}\Big)\dho.
	\end{align}
	By using the fact that $\psi$ is continuous (as we are in the convexity assumption stated in Step $1$) and from the fact that, for every $i\in B$,
	\begin{align*}
		\psi\Big(\frac{s_0}{t_k}\Big)\ho(\Gamma_k\cap [(x^i,x^{i+1})\times \R])= r^it_k\psi\Big(\frac{s_0}{t_k}\Big)\ho(\Gamma\cap [(x^i,x^{i+1})\times \R]),
	\end{align*}
	we get
	\begin{align*}
		\lim_{k\to\infty}\int_{\Gamma_k}&\psi(u_k)\dho\\[5pt]
		&=\lim_{k\to\infty}\Big[\sum_{i\in A}\psi\Big(\frac{u^j}{t_k}\Big)\ho(\Gamma\cap [(x^i,x^{i+1})\times \R])+\sum_{i\in B} r^it_k\psi\Big(\frac{s_0}{t_k}\Big)\ho(\Gamma\cap [(x^i,x^{i+1})\times \R])\Big]\\[5pt]
		&= \sum_{i\in A} \psi(u^j)\ho(\Gamma\cap [(x^i,x^{i+1})\times \R])+\sum_{i\in B}r^i \psi(s_0)\ho(\Gamma\cap [(x^i,x^{i+1})\times \R])\\[5pt]
		&=\sum_{i\in A} \widetilde{\psi}(u^j)\ho(\Gamma\cap [(x^i,x^{i+1})\times \R])+\sum_{i\in B} \widetilde{\psi}(u^j)\ho(\Gamma\cap [(x^i,x^{i+1})\times \R])\\[5pt]
		&=\int_{	\Gamma}\widetilde{\psi}(u^j)\dho.
	\end{align*}
	This concludes the estimate for the surface term in \eqref{limsupstepzero}. \\ 
	
	\emph{Step 6}. By putting all the steps together, we then conclude that
	\begin{align*}
		\lim_{k\to\infty}\f(\o_k,v_k,\mu_k)=\g(\o,v,\mu).
	\end{align*}
	This completes the proof of Theorem \ref{limsupinequality}.
\end{proof}

\bibliographystyle{siam}
\def\url#1{}
\bibliography{bibliography}

\end{document}